\newcommand{\mA}{\mathcal{A}}
\newcommand{\mB}{\mathcal{B}}
\newcommand{\mE}{\mathcal{E}}
\newcommand{\mI}{\mathcal{I}}
\newcommand{\mM}{\mathcal{M}}
\newcommand{\mP}{\mathcal{P}}
\newcommand{\mR}{\mathcal{R}}
\newcommand{\mS}{\mathcal{S}}
\newcommand{\mT}{\mathcal{T}}
\newcommand{\fm}{\mathfrak{m}}
\newcommand{\bfC}{\mathbf{C}}
\newcommand{\bfF}{\mathbf{F}}
\newcommand{\bfG}{\mathbf{G}}
\newcommand{\bfQ}{\mathbf{Q}}
\newcommand{\bfT}{\mathbf{T}}
\newcommand{\bfZ}{\mathbf{Z}}
\newcommand{\Oo}{\mathcal{O}}
\newcommand{\AL}{\mathbf{A}_L}
\newcommand{\ov}{\overline}
\newcommand{\be}{\begin{equation}}
\newcommand{\ee}{\end{equation}}
\newcommand{\bes}{\begin{equation*}}
\newcommand{\ees}{\end{equation*}}
\newcommand{\bs}{\begin{split}}
\newcommand{\es}{\end{split}}
\newcommand{\bss}{\begin{split*}}
\newcommand{\ess}{\end{split*}}
\newcommand{\bmat}{\left[ \begin{matrix}}
\newcommand{\emat}{\end{matrix} \right]}
\newcommand{\bsmat}{\left[ \begin{smallmatrix}}
\newcommand{\esmat}{\end{smallmatrix} \right]}
\newcommand{\bml}{\begin{multline}}
\newcommand{\eml}{\end{multline}}
\newcommand{\bmls}{\begin{multline*}}
\newcommand{\emls}{\end{multline*}}
\DeclareMathOperator{\diag}{diag}
\DeclareMathOperator{\End}{End}
\DeclareMathOperator{\Ext}{Ext}
\DeclareMathOperator{\Frob}{Frob}
\DeclareMathOperator{\Gal}{Gal}
\DeclareMathOperator{\GL}{GL}
\DeclareMathOperator{\GSp}{GSp}
\DeclareMathOperator{\Hom}{Hom}
\DeclareMathOperator{\Sel}{Sel}
\DeclareMathOperator{\SL}{SL}
\DeclareMathOperator{\Sp}{Sp}
\DeclareMathOperator{\val}{val}
\newcommand{\tr}{\textup{tr}\hspace{2pt}}
\theoremstyle{plain}
\newtheorem{thm}{Theorem}
\newtheorem{prop}[thm]{Proposition}
\newtheorem{cor}[thm]{Corollary}
\newtheorem{lemma}[thm]{Lemma}
\newtheorem{conj}[thm]{Conjecture}
\theoremstyle{definition}
\newtheorem{definition}[thm]{Definition}
\newtheorem{rem}[thm]{Remark} 
\newtheorem{assumption}[thm]{Assumption}
\numberwithin{thm}{section}
\numberwithin{equation}{section}
\newcites{app}{Appendix References}
\DeclareMathOperator{\SK}{SK}
    \DeclareFontFamily{U}{wncy}{}
    \DeclareFontShape{U}{wncy}{m}{n}{<->wncyr10}{}
    \DeclareSymbolFont{mcy}{U}{wncy}{m}{n}
    \DeclareMathSymbol{\Sh}{\mathord}{mcy}{"58}
\newcommand\CFsnoweight{{\mathcal S}} 
\newcommand\CFswtgp[2]{\CFsnoweight_{#1}(#2)}
\newcommand\paramodulargroup[1]{{\rm K}(#1)}
\newcommand\KN{\paramodulargroup N}
\newcommand\SkKN{\CFswtgp k\KN}
\newcommand\StwoKN{\CFswtgp2\KN}
\newcommand\StwoKlevel[1]{\CFswtgp2{\paramodulargroup{#1}}}
\newcommand\StwoKsto{\StwoKlevel{731}}   
\newcommand\SfourKN{\CFswtgp4\KN}
\newcommand\SfourKlevel[1]{\CFswtgp4{\paramodulargroup{#1}}}
\newcommand\SfourKsto{\SfourKlevel{731}}
\newcommand\JkNcusp[2]{{\rm J}_{#1,#2}^{\rm cusp}}
\newcommand\Jkmcusp{\JkNcusp km}
\newcommand\JkjNcusp{\JkNcusp k{jN}}
\def\Grit{\operatorname{Grit}}
\def\JRMJ{{\mathcal J}}
\def\JRMJdeps{\JRMJ_d^\epsilon}
\def\JRMJdpeps{\JRMJ_{d,p}^\epsilon}
\def\JRMJdsigns{\JRMJ_d^{\text{signs}}}
\def\JRMJdpsigns{\JRMJ_{d,p}^{\text{signs}}}
\def\detmax{{\det_{\max}}}
\def\AL{\operatorname{AL}}
\newcommand\lra{\longrightarrow}
\newcommand\mymod{\text{ mod }}
\def\Z{{\mathbb Z}}
\def\F{{\mathbb F}}
\newcommand\Fsubp[1]{\F_{\!#1}}
\def\Fp{\Fsubp p}
\def\Ffive{\Fsubp5}
\def\C{{\mathbb C}}
\def\Q{{\mathbb Q}}
\author[Tobias Berger, Krzysztof Klosin]{Tobias Berger$^1$ \and
Krzysztof Klosin$^2$ \\with an appendix by Cris Poor$^3$, Jerry Shurman$^4$, and David S. Yuen$^5$}
\address{$^1$School of Mathematics and Statistics, University of Sheffield, Hicks Building, Hounsfield Road, Sheffield S3 7RH, UK.}
\email{tberger@cantab.net}
\address{$^2$Department of Mathematics, Queens College of the City University of New York,  65-30 Kissena Blvd, Queens, NY 11367, USA}
\email{krzysztof.klosin@qc.cuny.edu}
\address{$^3$Department of Mathematics, Fordham University, Bronx, NY 10458, USA}
\email{poor@fordham.edu}
\address{$^4$Department of Mathematics, Reed College, Portland, OR 97202 USA}
\email{jerry@reed.edu}
\address{$^5$Department of Mathematics, University of Hawaii, 2565 McCarthy Mall, Honolulu, HI, 96822 USA}
\email{yuen888@hawaii.edu}
\begin{document}
\title[Deformations of Saito-Kurokawa type ]{Deformations of Saito-Kurokawa type and the Paramodular Conjecture}

\begin{abstract} We study short crystalline, minimal, essentially self-dual deformations of a mod $p$ non-semisimple Galois representation 
$\ov{\sigma}$ with $\ov{\sigma}^{\rm ss}=\chi^{k-2} \oplus \rho \oplus \chi^{k-1}$, where $\chi$ is the mod $p$ 
cyclotomic character and $\rho$ is an  absolutely irreducible reduction of the Galois representation $\rho_f$ attached 
to a cusp form $f$ of weight $2k-2$. We show that if the Bloch-Kato Selmer groups $H^1_f(\bfQ, \rho_f(1-k)\otimes \bfQ_p/\bfZ_p)$ 
and $H^1_f(\bfQ, \rho(2-k))$ have order $p$, and there exists a characteristic zero absolutely irreducible 
deformation of $\ov{\sigma}$ then the universal deformation ring is a dvr. When $k=2$ this allows us to establish the modularity part of the Paramodular Conjecture 
in cases when one can find a suitable congruence of Siegel modular forms. As an example we prove the modularity of  an abelian 
surface of conductor 731. When $k>2$, we obtain an $R^{\rm red}=T$ theorem showing modularity of all such deformations of $\ov{\sigma}$.\end{abstract}

\thanks{The first author's research was supported by an LMS Research in Pairs Grant and the EPSRC Grant EP/R006563/1. The second author was supported by a Collaboration for Mathematicians Grant \#578231  from the Simons Foundation
 and by a PSC-CUNY award jointly funded by the Professional Staff Congress and the City
University of New York.}

\maketitle

\section{Introduction} 
In analogy with the Taniyama-Shimura Conjecture (proved by Wiles et al.), in the 1980s Yoshida proposed a conjecture postulating that abelian surfaces over $\bfQ$ should correspond to Siegel modular forms. The most important progress on this conjecture is due to Tilouine and Pilloni (\cite{Tilouine06, Pilloni12}).  Their results show $p$-adic modularity of abelian surfaces $A$ (see also forthcoming work of Boxer, Calegari, Gee, and Pilloni on modularity, see \cite{BCGP})  under the assumption that the residual Galois representation  $A(\ov{\bfQ})[p]$ has image containing ${\rm PSp}_4(\bfF_p)$ and, more  significantly, is itself modular. The latter assumption essentially calls for an analogue of Serre's conjecture for $\ov{\sigma}_A$, which at present appears out of reach. 

Yoshida himself proved some cases of his conjecture in \cite{Yoshida80} when $\End_{\bfQ}(A)\neq\bfZ$.
In 2014, Brumer and Kramer proposed a refinement of  Yoshida's Conjecture by specifying the level of the Siegel modular form \cite{BrumerKramer14}. This is sometimes referred to as the \emph{Paramodular Conjecture}. 
\begin{conj}[Brumer - Kramer]  \label{conj:paramodular1}  For every isogeny class of abelian surfaces $A_{/\bfQ}$ of conductor $N$ with $\End_{\bfQ} A = \bfZ$ there exists a  weight 2 Siegel modular form $F$,  which is not in the space spanned by the Saito-Kurokawa lifts, has level $K(N)$ and rational eigenvalues,  where $K(N)$ is the paramodular group of level $N$ defined by $K(N) = \gamma M_{4}(\bfZ) \gamma^{-1} \cap \Sp_4(\bfQ)$ with $\gamma = \diag[1,1,1,N]$.  The $L$-series of $A$ and $F$ should agree and the $p$-adic representation of $T_{p}(A) \otimes \bfQ_{p}$ should be isomorphic to the one associated to $F$ for any $p$ prime to $N$ where $T_{p}(A)$ is the $p$-adic Tate module.
\end{conj}

Brumer-Kramer \cite{BrumerKramer14} and Poor-Yuen \cite{PoorYuen15} verified Conjecture \ref{conj:paramodular1} in several cases by proving the non-existence of an abelian surface for certain (small) conductors $N$ for which there are no non-lift Siegel modular forms of level $K(N)$. 
When $A$ acquires extra endomorphisms over a quadratic field the paramodularity of $A$ has in some cases been proven
 using theta lifts, see \cite{JLR12} and \cite{BDPS}.

At the time of writing this article the only genuinely symplectic case (i.e., concerning surfaces whose associated Galois representations are absolutely irreducible and not induced from $2$-dimensional representations)  of Conjecture \ref{conj:paramodular1} in which the abelian surface exists that had  been fully verified was the case of the abelian surface of conductor $277$, see \cite{Brumeretal19preprint}, which now also contains a proof for abelian surfaces of conductors 353 and 587.

In this article we study Galois representations arising from abelian surfaces (as in Conjecture \ref{conj:paramodular1}) with rational torsion. We propose a method to prove modularity of such representations which we carry out here under certain assumptions.  This case is not covered by the work of Tilouine and Pilloni et al. It has the advantage of not requiring modularity of  $\ov{\sigma}_A$  and replacing this assumption with a construction of congruences between paramodular Saito-Kurokawa lifts and stable Siegel modular forms.
 In other words we do not have a need for an analogue of Serre's Conjecture.  While potentially more accessible, proving the existence of such congruences is still a considerable problem. In this article however we mostly focus on the Galois representation side (deformation theory) of the modularity problem, and make use of results where the required congruence has been constructed by others  (mostly by Poor and Yuen). 
For the purposes of this article Poor, Yuen and Shurman have kindly agreed to prove the existence of such a congruence in the case of forms of paramodular level 731 (see Appendix). See also \cite{PoorYuen15} for other examples. 
A full modularity result would require the existence of such congruences in a  more general context, a problem which is a subject of joint work in progress with J. Brown.  As a consequence of the results proved in this article we are able to establish the modularity part of the Paramodular Conjecture in new (genuinely symplectic) cases (in particular the first composite level case) and provide a way to verify more, when the appropriate ingredients are supplied.

Let us explain the results and the methods of this paper in more detail. Let $p>3$ be a prime such that $p \nmid N$. Let $A$ be as in Conjecture \ref{conj:paramodular1} and suppose that $A$ has a rational $p$-torsion point,  semi-abelian reduction at $\ell \mid N$, and a polarization of degree prime to $p$.  This implies that the semisimplification $\ov{\sigma}_A^{\rm ss}$ of the residual Galois representation $\ov{\sigma}_A: G_{\bfQ} \to \GL_4(\bfF_p)$ attached to $A$ (i.e., afforded by the $p$-torsion Galois module $A(\ov{\bfQ})[p]$) has the form $$\ov{\sigma}_A^{\rm ss} \cong 1 \oplus \chi \oplus \rho^{\rm ss},$$ where $\chi$ is the mod $p$  cyclotomic character and $\rho $ is a two-dimensional representation.  Assume that $\rho$ is absolutely irreducible (it is automatically odd). Serre's Conjecture 
(now a theorem of Khare and Wintenberger) implies that $\rho$ arises as a mod $p$ reduction of the Galois representation $\rho_f$ attached to a modular form $f$ of weight 2 and level $\Gamma_0(N)$. If the sign of the functional equation for $f$ is $-1$, then  $\ov{\sigma}_A^{\rm ss}$ is the mod $p$ reduction of the  Galois representation attached to a weight 2, level $K(N)$ Siegel Hecke eigenform $\SK(f)$ which is the \emph{paramodular Saito-Kurokawa lift} of $f$. In fact, regardless of the sign of $f$, $\ov{\sigma}^{\rm ss}$ arises from a congruence level  Saito-Kurokawa lift of $f$,  see section \ref{s9.2} for  examples.

 Our first result shows that one can construct a Galois-invariant lattice $L$ in the space $\bfQ_p^4$ with $G_{\bfQ}$-action via $\sigma_A$ with respect to which the residual Galois representation is non-semisimple. More precisely, we can ensure that it is block-upper-triangular with a specific order of the Jordan-Hölder factors on the diagonal ($1$, $\rho$, $\chi$) - this order plays an important role in controlling the deformations (see below).   For the modularity argument it is crucial to know that there is only one isomorphism class of such residual representations as long as we require that they be short crystalline, minimally ramified and give rise to desired extensions of the Jordan-Hölder factors of $\ov{\sigma}_A$. The existence of such a lattice can be proved by essentially following the standard method of Ribet (cf. Proposition 2.1 in \cite{Ribet76}) adapted to a higher-dimensional setting. The uniqueness of the residual representation is trickier and requires us to study iterated Fontaine-Laffaille extensions as well as to control ramification at the primes dividing $N$.

We then study short crystalline, minimal and essentially self-dual deformations of the residual representation $\ov{\sigma}_A$. Let $R$ denote the reduced universal deformation ring. Its structure relevant for our purposes can be controlled by two sets of data: its ideals of reducibility $\{I_{\mP}\}_{\mP}$ corresponding to all possible partitions $\mP$ of the set of Jordan-Hölder factors of $\ov{\sigma}_A$ and the quotients $\{R/I_{\mP}\}_{\mP}$. Roughly speaking, the former control trace-irreducible deformations, while the latter  the trace-reducible ones. In the case when $I_{\mP}$ is the total ideal of reducibility (i.e., corresponds to the most refined partition), the quotient $R/I_{\mP}$ can be shown to be related to the Bloch-Kato Selmer group $H_{-1}$, where $H_i:=H^1_f(\bfQ, \rho_f(i)\otimes \bfQ_p/\bfZ_p)$,  by a generalization of the approach used in \cite{BergerKlosin13}. On the other hand understanding of  the ideals of reducibility $I_{\mP}$ requires an entirely new approach as the methods used in \cite{BergerKlosin13} cannot be extended to our current situation.

 To overcome this problem we draw on ideas of Bella{\"\i}che and Chenevier contained in sections 8 and 9 of their indispensable book \cite{BellaicheChenevierbook} which has served us as an inspiration on many occasions. There  a problem concerning representations with multiple Jordan-Hölder factors is studied in characteristic zero (as opposed to our characteristic $p$ situation). We, too, are able to prove that in our situation (assuming 
 that the divisible Bloch-Kato Selmer group  $H_0$ is of corank at most 1)  all the (a priori different) ideals of reducibility in fact coincide and as a consequence we are able to deduce that they are principal. This approach poses a considerable amount of technical difficulties because, among other things, we are not able to control ramification in the same way as Bella{\"\i}che and Chenevier do, one reason being that  their proof of the splitting of extensions at ramified primes (see proof of Proposition 8.2.10 in \cite{BellaicheChenevierbook}) does not extend to characteristic $p$.

So, while following the general strategy of \cite{BellaicheChenevierbook} we need a different way of manufacturing the necessary ingredients, and in particular are led to working  with a specific minimality condition. Happily this turns out to also be the correct condition for yielding deformations corresponding to Siegel modular forms of  squarefree paramodular level (see Proposition \ref{prop8.4}). In some sense section \ref{The ideals of reducibility}, where the ideal of reducibility is studied, comprises the technical heart of the paper.

We show that  if $H_0$ is of corank $\leq 1$ and $\# H_{-1}=p$ (that $\#H_{-1}\geq p$ is automatic thanks to the existence of the lattice $L$), then  $R$ is a discrete valuation ring (see Theorem \ref{dvrness}). It means that the deformation $\sigma_A$ of $\ov{\sigma}_A$ is the unique characteristic zero deformation. To conclude modularity of $A$ we now need a non-lift Siegel modular form $F$ of weight 2 and level $K(N)$ whose Galois representation $\sigma_F$ reduces (after semisimplification) to $\ov{\sigma}_A^{\rm ss}$. This is achieved by exhibiting the existence of a congruence between a candidate $F$ and the paramodular Saito-Kurokawa lift $\SK(f)$ (see section \ref{Examples} and the Appendix).   Irreducibility of $\sigma_F$ (which we prove) and uniqueness of the non-semisimple residual representation (discussed above) now guarantee that $\sigma_F$ is a characteristic zero deformation of $\ov{\sigma}_A$. So, we must have $\sigma_A \cong \sigma_F$, proving the modularity of $A$.  This can be viewed as the main application of our method in this paper, stated as Theorem \ref{paramodularity}. If a matching bound on $\bfT/J$ were available  (see next paragraph) our method would in fact be enough to prove a full $R=T$ theorem. We plan to address this problem in a future paper.

Let us note that we prove our results for a residual representation $\ov{\sigma}$ such that $\ov{\sigma}^{\rm ss} \cong \chi^{k-2} \oplus \chi^{k-1} \oplus \rho$ for any integer $k \geq 2$. The case $k=2$ (discussed above) is the most interesting because it corresponds to abelian surfaces and thus to Conjecture \ref{conj:paramodular1}. 
In fact for even $k>9$ and $N=1$ we prove a full modularity result $R=\bfT$ using a congruence result of Brown \cite{Brown11}, which provides us with the desired matching bound on $\bfT/J$ in this case.  

Let us now comment on the restrictiveness of our results. The most serious assumptions are the ones on the Selmer groups. It is worth noting that while for $H_{-1}$  we require a specific bound on its order, for $H_0$ we only require that is of corank 1. In fact, it is important that for our method the corank assumption of the latter group is sufficient since $H_0$ is infinite 
  if the central $L$-value of $f$ is zero. It is here that the order of the Jordan-Hölder factors on the diagonal of the residual representation is crucial - a different order could swap the conditions on $H_0$ and $H_{-1}$ making our theorem empty  in this case. 

 On the other hand the Selmer group $H_{-1}$ is non-critical which poses certain difficulty in computing its order. We overcome this by using Kato's result \cite{Kato04} towards the Main Conjecture of Iwasawa Theory and slightly adapting a control theorem of \cite{SkinnerUrban14} to relate the order to a special value of a $p$-adic (rather than classical) $L$-function of $f$ (cf. section \ref{noncritical}).

The assumptions on the Selmer groups are central to our method and in fact without them $R$ cannot be a dvr.

We also require that $p$ is not a congruence prime for $f$ and that each $\ell \mid N$ satisfies $p \nmid 1+w_{f, \ell}\ell$, where  $w_{f, \ell}$ is the local (at $\ell$) Atkin-Lehner sign of $f$. The first condition allows us to relate the order of $R/I_{\mP}$ to the order of $H_{-1}$, while the latter allows us to prove that the classes in $H^1(\bfQ, \rho_f\otimes \bfQ_p/\bfZ_p)$ are automatically unramified at $\ell \mid N$  (see Proposition \ref{Selmer groups 2}). It is conceivable that both of these conditions could be relaxed, but we do not know of a way to do it. What is important is that these conditions are often satisfied for small primes  which has been our goal since it is often the case that abelian surfaces possess rational  $p$-torsion for such primes. Unfortunately, the method to prove principality of the ideal of reducibility requires that $p$ does not divide $d!$, where $d$ is the dimension of the Galois representations (cf. section 1.2 of \cite{BellaicheChenevierbook}), which forces us to exclude $p=2,3$.
As for elliptic curves the size of the torsion subgroup of rational
points on abelian surfaces is conjectured to be bounded, but so far this has only been proven in
special cases. Importantly for us it is known though by work e.g. of Flynn \cite{Flynn90} that there are infinitely many
abelian surfaces with rational torsion points for certain primes $p$.

While we  hope that this article is only the first half of a larger undertaking of proving a full $R=\bfT$ theorem for abelian surfaces in Conjecture \ref{conj:paramodular1} with rational torsion (the second half being the mentioned work in progress on constructing congruences), it is worth noting that even without the matching work on the Hecke side, it allows us to prove modularity of several new examples. In section \ref{Examples} we work out the details for an abelian surface of conductor 731, and discuss other examples. 

The paper is organized as follows. We begin by assembling necessary results concerning Selmer groups in section \ref{Setup} and define the relevant deformation problem  in section \ref{Deformations}. The lattice $L$ is constructed in section \ref{Lattice} and the uniqueness of the residual representation is proved in section \ref{Uniqueness of iterated residual extensions}. Section \ref{The ideals of reducibility} is devoted to the study of the ideals of reducibility $I_{\mP}$, while in Section \ref{Cyclicity} we study the quotients $R/I_{\mP}$ and conclude the proof that $R$ is a dvr. In section \ref{Application to the Paramodular Conjecture} we prepare the ground for applications to examples of abelian surfaces satisfying our conditions and state the application to the Paramodular Conjecture (Theorem \ref{paramodularity}). Finally, in section \ref{Examples} we prove the modularity of an abelian surface of conductor $N=731$ and discuss other examples. We conclude the paper by proving an $R=\bfT$ theorem in the case of $k>2$ and $N=1$ in section \ref{Modularity theorem}.

We would like to thank David Savitt for helping us with the proof of Proposition \ref{uni1} for the prime $p$ and Jim Brown, Armand Brumer, Kenneth Kramer, Chris Skinner and Eric Urban for helpful conversations related to the topics of this article.   We would also like to thank the anonymous referee for some helpful comments on the manuscript.

\section{Selmer groups} \label{Setup} 
  For each prime $\ell$ of $\bfQ$ we fix embeddings $\ov{\bfQ} \hookrightarrow \ov{\bfQ}_{\ell} \hookrightarrow \bfC$.  Let $p>3$ be a prime.
Throughout this paper $E$ will denote a sufficiently large finite extension of $\bfQ_p$, $\Oo$ its valuation ring with uniformizer $\varpi$ and residue field $\bfF$. Let $\epsilon$ be the $p$-adic cyclotomic character. We will write $\chi$ for its mod $\varpi$ reduction.

\subsection{Definitions and first properties}
In this section we define (local and global) Selmer groups which will be in use throughout the paper and recall some of their basic properties - for a more detailed treatment we refer the reader to section 5 of \cite{BergerKlosin13}. 
 For $\Sigma$ a finite set of finite places of $\bfQ$ containing $p$ we write $G_{\Sigma}$ for the Galois group of the maximal extension of $\bfQ$ unramified outside of $\Sigma$ and infinity. We write $G_{\bfQ_{\ell}}$ for  the absolute Galois group of $\bfQ_{\ell}$.
Let $M$ be an $\Oo$-module with an $\Oo$-linear action of $G=G_K$ or $G_{\Sigma}$. We call $M$ a $p$-adic $G$-module over $\Oo$ if one of the following holds:
\begin{enumerate}
  \item $M$ is \emph{finitely generated}, i.e. a finitely generated $\bfZ_p$-module and the $G$-action is continuous for the $p$-adic topology
      on $M$;
  \item $M$ is \emph{discrete}, i.e. a torsion $\bfZ_p$-module of finite corank (i.e. $M$ is isomorphic as a $\bfZ_p$-module to
      $(\bfQ_p/\bfZ_p)^r \oplus M'$ for some $r \geq 0$ and some $\bfZ_p$-module $M'$ of finite order) and the $G$-action on $M$ is continuous
      for the discrete topology on $M$;
  \item $M$ is a finite-dimensional $\bfQ_p$-vector space and the $G$-action is continuous for the $p$-adic topology on $M$.
\end{enumerate}

$M$ is both finitely generated and discrete if and only if it is of finite cardinality.

Given a $p$-adic $G_{\Sigma}$-module $M$  we assume that we have a finite-singular structure $\mS$ on $M$ in the sense of \cite{Weston00}, i.e. for each prime $\ell \in \Sigma$  a choice of $\Oo$-submodule $H^1_{f, \mS}(\bfQ_{\ell},M) \subseteq H^1(\bfQ_{\ell},M)$.
We then define two global Selmer groups for $M$:
	\be \label{BKsel} H^1_{f, \mS}(\bfQ,M)={\rm ker}(H^1(G_{\Sigma},M) \to \prod_{\ell \in \Sigma} H^1(\bfQ_{\ell},M)/H^1_{f,\mS}(\bfQ_{\ell},M)\ee
and a ``relaxed'' Selmer group (no conditions at primes $\ell \in \Sigma\setminus\{p\}$)
$$H^1_{\Sigma, \mS}(\bfQ, M):= \ker \left( H^1(G_{\Sigma}, M) \to H^1(\bfQ_{p},M)/H^1_{f, \mS}(\bfQ_p, M)\right).$$

We consider the following local finite-singular structures  $H^1_{f, \mS}(\bfQ_{\ell},M)$  (dropping the subscript $\mS$ for the place at infinity and at $p$, where we fix the choice of finite-singular structure):

We always take $H^1_f(\bfQ_{\ell},M)=0$ for $\ell \mid \infty$.

 For $\ell=p$ we define the \emph{crystalline} local finite-singular structure as follows.
 Let $T \subseteq V$ be a $G_K$-stable $\bfZ_p$-lattice and put $W=V/T$. For $n \geq 1$, put $$W_n=\{x \in W: \varpi^n x =0\}\cong T/\varpi^n T.$$

Following Bloch and Kato \cite{BlochKato90} we define
$H^1_f(\bfQ_{\ell},V)={\rm ker}(H^1(\bfQ_{\ell},V)\to H^1(\bfQ_{\ell},B_{\rm crys} \otimes V))$, denote by $H^1_f(\bfQ_{\ell},T)$ its pullback via the natural map $T \hookrightarrow V$ and let $H^1_f(\bfQ_{\ell}, W)={\rm im}(H^1_f(\bfQ_{\ell},V)\to H^1(\bfQ_{\ell},W))$. Finally, we set $H^1_f(\bfQ_{\ell}, W_n)$ to be the inverse image of $H^1_f(\bfQ_{\ell}, W)$ under the map $H^1(\bfQ_{\ell}, W_n) \to H^1_f(\bfQ_{\ell}, W)$.

For finitely generated $p$-adic $G_{\bfQ_p}$-modules we recall the theory of Fontaine-Laffaille \cite{FontaineLaffaille82}, following the exposition in
\cite{ClozelHarrisTaylor08} Section 2.4.1. Let $\mathcal{M F}_{\Oo}$ (``Dieudonn\'e modules'') denote the
category of finitely generated $\Oo$-modules  $M$ together with a decreasing filtration ${\rm Fil}^i M$ by $\Oo$-submodules which are $\Oo$-direct summands with
${\rm Fil}^0 M=M$ and ${\rm Fil}^{p-1} M=(0)$ and Frobenius  linear maps $\Phi^i:{\rm Fil}^i M \to M$ with $\Phi^i|_{{\rm Fil}^{i+1} M} = p
\Phi^{i+1}$ and $\sum \Phi^i {\rm Fil}^i M=M$. They define an exact, fully faithful covariant functor $\mathbf{G}$
of $\Oo$-linear categories from
$\mathcal{M F}_{\Oo}$ (in their notation $\mathbb{G}_{\tilde v}$ and $\mathcal{M F}_{\Oo,\tilde v}$)  to the category of finitely generated $\Oo$-modules with
continuous action by $G_{\bfQ_p}$. Its essential image is closed under taking subquotients, finite direct sums and contains quotients of lattices in
 short crystalline representations defined as
follows: We call $V$ a continuous finite-dimensional $G_{\bfQ_p}$-representation over $\bfQ_p$  \emph{short crystalline}
 if, for all places $v \mid p$,
${\rm Fil}^0 D=D$ and ${\rm Fil}^{p-1} D=(0)$ for the filtered vector space $D=(B_{\rm crys} \otimes_{\bfQ_p} V)^{G_v}$ defined by Fontaine.

For any $p$-adic  $G_{\bfQ_{p}}$-module $M$ of finite cardinality in the essential image of $\mathbf{G}$ (we will call such modules short crystalline) we define $H^1_f(\bfQ_{p},M)$ as the image of ${\rm Ext}^1_{\mathcal{M
F}_\Oo}(1_{\rm FD}, D)$ in $H^1(\bfQ_{p},M)\cong {\rm Ext}^1_{\Oo[G_{\bfQ_{p}}]}(1,M)$, where $\mathbf{G}(D)=M$ and $1_{\rm FD}$ is the unit filtered Dieudonn\'e
module defined in Lemma 4.4 of \cite{BlochKato90}.

For any complete Noetherian $\bfZ_p$-algebra we define a representation $\rho: G_{\bfQ_p} \to {\rm GL}_n(A)$ to be crystalline if for each Artinian quotient
$A'$ of $A$, $\rho \otimes A'$ lies in the essential image of $\mathbf{G}$.

\begin{rem} \label{DFG22}
Note that for short crystalline representations $V$ we gave two different definitions for $H^1_f(\bfQ_p, W_n)$ (via pullback of $H^1_f(\bfQ_p, W)$ and via the $\bfG$-functor). That these agree follows from the proof of Proposition 2.2 in \cite{DiamondFlachGuo04} or \cite{BergerKlosin13} Lemma 5.3.

\end{rem}

For primes $\ell \neq p$ we define the \emph{unramified} local finite-singular structure on any $p$-adic $G_{\bfQ_{\ell}}$-module $M$ over $\Oo$  as $$H^1_{f, {\rm ur}}(\bfQ_{\ell},M)=H^1_{\rm
ur}(\bfQ_{\ell},M)={\rm ker}(H^1(\bfQ_{\ell},M) \to H^1(\bfQ_{\ell, \rm ur},M)),$$  where $\bfQ_{\ell, \rm ur}$ is the maximal unramified extension of  $\bfQ_{\ell}$.

Let $V$ be a continuous finite-dimensional $G_{\bfQ_{\ell}}$-representation over $\bfQ_p$ and $T \subseteq V$ be a $G_{\bfQ_{\ell}}$-stable $\bfZ_p$-lattice and put $W=V/T$.
Bloch-Kato then define the following \emph{minimal} finite-singular structures on $V$, $T$ and $W$:   $$H^1_{f, {\rm min}}(\bfQ_{\ell},V)=H^1_{\rm ur}(\bfQ_{\ell},V), $$ $$H^1_{f, {\rm min}}(\bfQ_{\ell},T)=i^{-1} H^1_f(\bfQ_{\ell},V) \text{ for } T \overset{i}{\hookrightarrow} V$$ and $$H^1_{f , {\rm min}}(\bfQ_{\ell},W)={\rm im}(H^1_f(\bfQ_{\ell},V) \to H^1(\bfQ_{\ell},W)).$$

\begin{rem} \label{two choices}
Following \cite{Rubin00} Definition 1.3.4 we define $H^1_{f, {\rm min}}(\bfQ_{\ell},W_n)$ as the inverse image of
$H^1_{f, {\rm min}}(\bfQ_{\ell},W)$ under the map $H^1(\bfQ_{\ell},W_n) \to H^1(\bfQ_{\ell},W)$.

By \cite{Rubin00} Lemma 1.3.5 we have $H^1_{f, {\rm min}}(\bfQ_{\ell},W)=H^1_{\rm ur}(\bfQ_{\ell},W)_{\rm div}$ 
 and if $W^{I_{\ell}}$ is divisible we further have $H^1_{f, {\rm min}}(\bfQ_{\ell},W)=H^1_{\rm ur}(\bfQ_{\ell},W)$, $H^1_{f, {\rm min}}(\bfQ_{\ell},T)=H^1_{\rm ur}(\bfQ_{\ell},T)$ and $H^1_{f, {\rm min}}(\bfQ_{\ell},W_n)=H^1_{\rm ur}(\bfQ_{\ell},W_n)$ (for the latter using also \cite{Rubin00} Lemma 1.3.8 and its proof).
\end{rem}

\begin{lemma}[\cite{Weston00} Lemma II.3.1] \label{Sel_funct}
Let $0 \to M' \overset{i}{\to} M \overset{j}{\to} M'' \to 0$ be an exact sequence of $p$-adic $G_{\Sigma}$-modules over $\Oo$. Assume that the modules have been given the induced finite-singular structures, i.e. such that $H^1_{f, \mS} (\bfQ_{\ell}, M'') = j_* H^1_{f, \mS}(\bfQ_{\ell}, M)$ and $H^1_{f, \mS}(\bfQ_{\ell}, M') =
i_*^{-1} H^1_{f, \mS}(\bfQ_{\ell}, M)$ for all ${\ell}$.

Then there is an exact sequence $$0 \to H^0(\bfQ,M') \to H^0(\bfQ,M) \to H^0(\bfQ,M'') \to H^1_{f, \mS}(\bfQ, M') \to H^1_{f, \mS}(\bfQ, M) \to H^1_{f, \mS}(\bfQ, M'').$$
\end{lemma}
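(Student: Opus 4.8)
The plan is to derive the exact Selmer sequence from the long exact sequence in Galois cohomology for $G_\Sigma$ together with a diagram chase controlling the local conditions. First I would write down the long exact sequence
\[
0 \to H^0(\bfQ, M') \to H^0(\bfQ, M) \to H^0(\bfQ, M'') \overset{\delta}{\to} H^1(G_\Sigma, M') \overset{i_*}{\to} H^1(G_\Sigma, M) \overset{j_*}{\to} H^1(G_\Sigma, M''),
\]
where $H^0(\bfQ, -)$ means $H^0(G_\Sigma, -) = (-)^{G_\Sigma}$. Exactness of the first portion (through $H^0(\bfQ, M'')$) is immediate, and the claimed sequence agrees with it there, so the only real content is checking exactness at $H^1_{f,\mS}(\bfQ, M')$, at $H^1_{f,\mS}(\bfQ, M)$, and that the maps actually land in the subgroups claimed.

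For the maps landing correctly: I would observe that $i_*$ and $j_*$ on global $H^1$ are compatible with the restriction maps to each $H^1(\bfQ_\ell, -)$, and that by the hypothesis on the induced structures, $i_*$ carries $H^1_{f,\mS}(\bfQ_\ell, M')$ into $H^1_{f,\mS}(\bfQ_\ell, M)$ and $j_*$ carries $H^1_{f,\mS}(\bfQ_\ell, M)$ into $H^1_{f,\mS}(\bfQ_\ell, M'')$ for every $\ell \in \Sigma$ (the first because $H^1_{f,\mS}(\bfQ_\ell,M') = i_*^{-1}H^1_{f,\mS}(\bfQ_\ell,M)$, the second by definition $H^1_{f,\mS}(\bfQ_\ell,M'') = j_*H^1_{f,\mS}(\bfQ_\ell,M)$). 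Hence $i_*$ and $j_*$ restrict to maps on the global Selmer groups, and the connecting map $\delta$ lands in $H^1_{f,\mS}(\bfQ, M')$ because $\delta$ factors through $i_*^{-1}(\ker i_*) $, i.e. $\delta(H^0(\bfQ,M''))$ consists of classes killed by $i_*$; restricting to $\bfQ_\ell$, the restriction of $\delta(c)$ maps to $0$ in $H^1(\bfQ_\ell, M)$, hence lies in $i_*^{-1}H^1_{f,\mS}(\bfQ_\ell, M) = H^1_{f,\mS}(\bfQ_\ell, M')$.

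Exactness at $H^1_{f,\mS}(\bfQ, M')$ reduces to: a class $x \in H^1_{f,\mS}(\bfQ, M')$ maps to $0$ under $i_*$ iff it comes from $\delta$. One direction is $j_* \circ i_* = 0$ globally; for the other, if $i_*(x) = 0$ then by exactness of the original long sequence $x = \delta(c)$ for some $c \in H^0(\bfQ, M'')$, and we just need $x$ already in $H^1_{f,\mS}$, which it is by assumption. Exactness at $H^1_{f,\mS}(\bfQ, M)$ is the delicate point and is where I expect the one genuine subtlety: given $y \in H^1_{f,\mS}(\bfQ, M)$ with $j_*(y) = 0$, exactness of the ambient sequence gives $y = i_*(x)$ for some $x \in H^1(G_\Sigma, M')$, and one must upgrade $x$ to lie in $H^1_{f,\mS}(\bfQ, M')$. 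This follows precisely because the structure on $M'$ is the \emph{induced} one: for each $\ell \in \Sigma$ the restriction $x_\ell$ satisfies $i_*(x_\ell) = y_\ell \in H^1_{f,\mS}(\bfQ_\ell, M)$, hence $x_\ell \in i_*^{-1}H^1_{f,\mS}(\bfQ_\ell, M) = H^1_{f,\mS}(\bfQ_\ell, M')$, so $x$ satisfies all local conditions and lies in $H^1_{f,\mS}(\bfQ, M')$. The main obstacle — and the reason the hypothesis that $M'$ and $M''$ carry the induced structures is indispensable — is exactly this last lifting step: without the equality $H^1_{f,\mS}(\bfQ_\ell, M') = i_*^{-1}H^1_{f,\mS}(\bfQ_\ell, M)$ one could not guarantee that the preimage class satisfies the local conditions, and the sequence would fail to be exact at $H^1_{f,\mS}(\bfQ, M)$. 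Everything else is a formal consequence of the snake/long-exact-sequence machinery, so I would present it as a short diagram chase citing \cite{Weston00} Lemma II.3.1 for the precise statement.
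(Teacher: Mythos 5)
Your argument is correct and is the standard diagram chase; the paper itself gives no proof of this lemma, simply citing Weston (Lemma II.3.1), and your proof is essentially the one found there. The only slip is cosmetic: in checking exactness at $H^1_{f,\mS}(\bfQ,M')$ the vanishing composite you need from the ambient long exact sequence is $i_*\circ\delta=0$, not $j_*\circ i_*=0$, but the rest of your chase (in particular the key lifting step at $H^1_{f,\mS}(\bfQ,M)$ using $H^1_{f,\mS}(\bfQ_\ell,M')=i_*^{-1}H^1_{f,\mS}(\bfQ_\ell,M)$) is exactly right.
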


The following proposition summarizes the facts we will need about the Selmer groups we have defined:

\begin{prop} \label{torsion coeffs} \begin{enumerate} \item Suppose $W^{G_{\Sigma}}=0$.  Then $H^1_{\Sigma}(\bfQ, W_n) \cong H^1_{\Sigma}(\bfQ, W)[\varpi^n]$ and $H^1_{f, {\rm min}}(\bfQ, W_n) \cong H^1_{f, {\rm min}}(\bfQ, W)[\varpi^n]$. \item If the module $W^{I_{\ell}}$ is divisible for every $\ell \in \Sigma\setminus\{p\}$, then $H^1_{f, {\rm ur}}(\bfQ, W_n)=H^1_{f, {\rm min}}(\bfQ, W_n)$ and   $H^1_{f, {\rm ur}}(\bfQ, W)=H^1_{f, {\rm min}}(\bfQ, W)$. \item If $W$ is short crystalline at $p$ then one can identify $H^1_f(\bfQ_p, W_n)$ as the image of ${\rm Ext}^1_{\mathcal{M
F}_\Oo}(1_{\rm FD}, D)$, where $D$ maps to $W_n$ by the Fontaine-Laffaille functor. \end{enumerate}\end{prop}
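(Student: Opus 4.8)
\textbf{Proof proposal for Proposition \ref{torsion coeffs}.}

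The plan is to treat the three parts separately, since they rely on different mechanisms. For part (1), the hypothesis $W^{G_\Sigma}=0$ will be leveraged through the long exact sequence in Galois cohomology attached to the short exact sequence $0\to W_n \to W \overset{\varpi^n}{\to} W \to 0$ (using that $W$ is divisible, so multiplication by $\varpi^n$ is surjective). The vanishing of $H^0(G_\Sigma,W)=W^{G_\Sigma}$ gives $H^1(G_\Sigma,W_n)\cong H^1(G_\Sigma,W)[\varpi^n]$. The point is then to check that this isomorphism is compatible with the local conditions defining $H^1_\Sigma$ and $H^1_{f,\min}$. At $p$ there is no condition to track for $H^1_\Sigma$; for $H^1_{f,\min}$ the compatibility at primes $\ell\neq p$ is precisely the definition of $H^1_{f,\min}(\bfQ_\ell,W_n)$ as the preimage of $H^1_{f,\min}(\bfQ_\ell,W)$ (Remark \ref{two choices}), so intersecting the isomorphism on $H^1(G_\Sigma,-)$ with the local conditions commutes with the identification. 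I would also need to observe $W_n^{G_\Sigma}=0$ follows from $W^{G_\Sigma}=0$ since $W_n\subseteq W$, which is used implicitly.

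For part (2), the statement is essentially a global repackaging of the local facts recorded in Remark \ref{two choices}: if $W^{I_\ell}$ is divisible then $H^1_{f,\min}(\bfQ_\ell,W)=H^1_{\rm ur}(\bfQ_\ell,W)$ and $H^1_{f,\min}(\bfQ_\ell,W_n)=H^1_{\rm ur}(\bfQ_\ell,W_n)$ (the last via Rubin Lemma 1.3.8). Since the global Selmer group is by definition the kernel of the restriction map into the product of local quotients $H^1(\bfQ_\ell,-)/H^1_{f,\mS}(\bfQ_\ell,-)$, and the two finite/singular structures agree term by term at every $\ell\in\Sigma\setminus\{p\}$ (and trivially at $p$, where both use the same crystalline structure), the kernels coincide. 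So this part is a one-line deduction once the local identifications are invoked; no genuine obstacle here.

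For part (3), this is the content of Remark \ref{DFG22}: the two a priori different definitions of $H^1_f(\bfQ_p,W_n)$ for a short crystalline $W$ — one as the pullback of $H^1_f(\bfQ_p,W)$ along $H^1(\bfQ_p,W_n)\to H^1_f(\bfQ_p,W)$, and one as the image of ${\rm Ext}^1_{\mathcal{MF}_\Oo}(1_{\rm FD},D)$ under the Fontaine--Laffaille functor $\mathbf{G}$, where $\mathbf{G}(D)=W_n$ — agree, by the argument in the proof of Proposition 2.2 of \cite{DiamondFlachGuo04} (or Lemma 5.3 of \cite{BergerKlosin13}). So I would simply cite that. The main obstacle among the three parts is part (1): one must be careful that ``divisible $W$'' is what makes $\varpi^n$ surjective on $W$ and hence produces the connecting isomorphism, and that the local-condition bookkeeping at $\ell\neq p$ genuinely matches the definition of $H^1_{f,\min}(\bfQ_\ell,W_n)$ as a preimage rather than, say, a $\varpi^n$-torsion subgroup — but this is exactly how $H^1_{f,\min}$ at finite $\ell\neq p$ was set up in Remark \ref{two choices}, so the compatibility is built in by design. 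I do not anticipate a serious difficulty; the proposition is a collation of the local input established just above it.
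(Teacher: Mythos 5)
Your proposal is correct and follows essentially the same route as the paper: the paper packages your long-exact-sequence-plus-local-bookkeeping argument for (1) into a single application of Lemma \ref{Sel_funct} to $0 \to W[\varpi^n] \to W \xrightarrow{\varpi^n} W \to 0$ (the hypotheses of that lemma hold precisely because the minimal and crystalline structures on $W_n$ are defined as pullbacks of those on $W$), and (2), (3) are read off from Remarks \ref{two choices} and \ref{DFG22} exactly as you say. One small correction: for the relaxed group $H^1_{\Sigma}$ the \emph{only} local condition is the one at $p$ (not ``no condition at $p$''), but since the crystalline structure on $W_n$ is also defined as the inverse image of that on $W$, the compatibility you need there holds for the same reason as at $\ell \neq p$.
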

\begin{proof} The first statement follows from Lemma \ref{Sel_funct} applied to the sequence $0 \to W[\varpi^n] \to W \overset{\times \varpi^n}{\to} W \to  0$  since the minimal and crystalline finite-singular structures on $W_n$ are induced from the one on $W$ by definition. (This statement is also stated as \cite{Rubin00}, Lemma 1.5.4 (where this Selmer group is denoted $\mS^{\emptyset}(\bfQ, W)$) upon noting that our assumption forces injectivity of the map $H^1(G_{\Sigma}, W_n) \to H^1(G_{\Sigma},W)[\varpi^n]$ by \cite{Rubin00}, Lemma 1.2.2(i)). 

The second statement follows immediately from Remark \ref{two choices}, and the final one from 
Remark \ref{DFG22}. \end{proof}

In the following we will write $H^1_f(\bfQ,M)$ for $H^1_{f, {\rm ur}}(\bfQ, M)$ and $H^1_{\Sigma}(\bfQ,M)$ for $H^1_{\Sigma, {\rm ur}}(\bfQ, M)$  for any $p$-adic $G_{\Sigma}$-module $M$.

\subsection{More properties of Selmer groups}

Assume that $k\geq 2$ is an even integer such that $p>2k-2$.  Note that under this assumption the $p$-adic Galois representation $\rho_f$ associated to an eigenform of weight $2k-2$ of level not divisble by $p$ is short crystalline.  This assumption will be in force throughout the paper.

\begin{prop} \label{invariants} Let
$\rho_f : G_{\Sigma} \to \GL_2(E)$ be the Galois representation attached to a newform $f \in S_{2k-2}(N)$ for $N$ a square-free integer with $p \nmid N$ and $\Sigma=\{ \ell \mid N \} \cup \{p\}$. Suppose also that the residual representation $\ov{\rho}_f$ is  absolutely irreducible and  ramified at every prime $\ell \mid N$. 
Then for $i=1, 2$ \begin{itemize}
\item[(i)] $(\rho_f(i-k)\otimes E/\Oo)^{I_{\ell}}$ is divisible for all $\ell \neq p$
\item[(ii)] $(\rho_f(i-k)\otimes E/\Oo)^{G_{\Sigma}}=0$.
\end{itemize}
\end{prop}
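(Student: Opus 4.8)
The plan is to reduce both statements to standard facts about the local and global behaviour of the Galois representation $\rho_f$ attached to a newform of weight $2k-2$ and squarefree level $N$. For part (i), I would argue place by place for $\ell \mid N$. Since $f$ is a newform of squarefree level $N$ and $\ov\rho_f$ is ramified at $\ell$, the local representation $\rho_f|_{G_{\bfQ_\ell}}$ is (up to twist by an unramified character) an extension of an unramified character by itself twisted by $\epsilon$; concretely, on inertia $I_\ell$ it is unipotent with a single Jordan block, so $\rho_f^{I_\ell}$ is a line. Twisting by $\epsilon^{i-k}$ does not change the restriction to $I_\ell$, so $(\rho_f(i-k))^{I_\ell}$ is one-dimensional over $E$. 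Writing $V = \rho_f(i-k)$, $T$ a Galois-stable lattice and $W = V/T$, one has an exact sequence $0 \to T^{I_\ell} \to V^{I_\ell} \to W^{I_\ell} \to H^1(I_\ell, T)$, and I would show the connecting map lands in the torsion-free part, or more simply observe that $W^{I_\ell}$ contains the divisible module $(V^{I_\ell})/(T^{I_\ell}) \cong E/\Oo$ with finite index; the point is then that $W^{I_\ell}$ is divisible precisely when $T^{I_\ell}$ is saturated in $V^{I_\ell}$, which, after possibly adjusting, follows because $V^{I_\ell}$ is a line and $I_\ell$ acts through a finite quotient on $W[\varpi]$ only via the unipotent part. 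The cleanest route: $W^{I_\ell} = (V^{I_\ell} + T)/T$, and since $V^{I_\ell}$ is one-dimensional, $(V^{I_\ell} + T)/T$ is a quotient of $V^{I_\ell}/(V^{I_\ell}\cap T) \cong E/\Oo$, hence divisible, and it is all of $W^{I_\ell}$ because any $w \in W^{I_\ell}$ lifts to $v \in V$ with $(\sigma - 1)v \in T$ for all $\sigma \in I_\ell$; as the image of $\rho_f(I_\ell)$ is a single unipotent block, $(\sigma-1)v$ generates a bounded submodule, forcing $v \in V^{I_\ell} + T$ after scaling. I expect this local divisibility bookkeeping to be the main technical obstacle, though it is routine once the local shape of $\rho_f|_{G_{\bfQ_\ell}}$ is pinned down.

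For part (ii), I would use that $\ov\rho_f$ is absolutely irreducible, hence $\rho_f$ is absolutely irreducible, hence has no nonzero $G_\Sigma$-invariants in $V = \rho_f(i-k)\otimes E$ and none in $T$. For the torsion module $W = V/T$, I would argue that $W^{G_\Sigma} = 0$ by showing $W[\varpi]^{G_\Sigma} = 0$: since $\Oo$-divisibility and the descending chain condition reduce $W^{G_\Sigma} = 0$ to the vanishing of the $\varpi$-torsion invariants, it suffices to see $\ov\rho_f(i-k)$ has no $G_\Sigma$-fixed line. This follows from absolute irreducibility of $\ov\rho_f$ (twisting by a power of $\chi$ preserves irreducibility), provided $\ov\rho_f(i-k)$ is not the trivial character — which it cannot be, being two-dimensional and irreducible. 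Concretely: if $0 \neq w \in W[\varpi]^{G_\Sigma}$ then $\bfF w \subseteq W[\varpi] \cong \ov\rho_f(i-k)$ is a $G_\Sigma$-stable line, contradicting irreducibility of $\ov\rho_f$. Hence $W[\varpi]^{G_\Sigma} = 0$, and then $W^{G_\Sigma} = 0$ since any nonzero element would generate a submodule meeting $W[\varpi]$ nontrivially.

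A few bookkeeping points to include: one must note that the hypothesis $p > 2k-2$ (equivalently $2k-2 < p$, so the Hodge--Tate weights lie in $[0, p-2]$) is what guarantees $\rho_f$ is short crystalline at $p$, which is needed elsewhere but not for this proposition's proof; here only the residual irreducibility and the squarefree-level local structure are used. I would also remark that the choice of lattice $T$ is immaterial for the divisibility and vanishing statements since any two lattices are commensurable and $E/\Oo$ is divisible. Finally, I would spell out the reduction "$W[\varpi]^{G_\Sigma} = 0 \implies W^{G_\Sigma} = 0$" via the exact sequence $0 \to W[\varpi] \to W \xrightarrow{\varpi} W \to 0$ and the fact that $W$ has finite corank, so a descending chain argument or the observation that $W^{G_\Sigma}$ is a $\varpi$-divisible $\Oo$-module of finite corank with trivial $\varpi$-torsion, hence zero.
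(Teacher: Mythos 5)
Your part (ii) is correct and is exactly the paper's argument (reduce to $W[\varpi]^{G_{\Sigma}}=0$ via the observation that a nonzero invariant has a nonzero $\Oo$-multiple in $W[\varpi]\cong\ov{\varrho}$, then invoke irreducibility). The issue is in part (i).

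The crux of (i) is your claim that $W^{I_{\ell}}=(V^{I_{\ell}}+T)/T$, i.e.\ that any $v\in V$ with $(g-1)v\in T$ (for $g$ a topological generator of the image of $I_{\ell}$) lies in $V^{I_{\ell}}+T$. This is exactly the point where the hypothesis that $\ov{\rho}_f$ is \emph{residually} ramified at $\ell$ must be used, and your justification (``$(\sigma-1)v$ generates a bounded submodule, forcing $v\in V^{I_{\ell}}+T$ after scaling'') does not use it and does not work: scaling $v$ only puts a multiple of $v$ into $V^{I_{\ell}}+T$, which proves nothing about divisibility. In general $(g-1)^{-1}(T)$ contains $V^{I_{\ell}}+T$ with a possibly nontrivial finite quotient, so $W^{I_{\ell}}$ is only an extension of a finite group by $E/\Oo$, which need not be divisible. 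Concretely, if with respect to the lattice $T=\Oo^2$ one had $\varrho(g)=\bmat 1 & \varpi\\ 0&1\emat$ (Steinberg at $\ell$, single unipotent Jordan block, but $\ov{\varrho}$ unramified), then $W^{I_{\ell}}=\{(x,y): y\in\varpi^{-1}\Oo\}/\Oo^2\cong E/\Oo\oplus\Oo/\varpi$, which is not divisible, while $(V^{I_{\ell}}+T)/T\cong E/\Oo$ is a proper submodule. So your argument, as written, would ``prove'' divisibility without the residual ramification hypothesis, which is false. What the paper does instead is compute $\varrho(g)-I_2=\bmat cd & d^2\\ -c^2 & -cd\emat$ explicitly in a basis where $T$ is the standard lattice, observe that $c,d\in\Oo$ and that residual ramification forces $c$ or $d$ to be a unit, and then check directly that $W^{I_{\ell}}=\{(x,y)+T: cx+dy\in\Oo\}\cong E/\Oo$. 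To repair your proof you need to insert the residual ramification at precisely this step: the unipotent generator must be nontrivial modulo $\varpi$ with respect to the chosen lattice, and that is what rules out the finite error term. Also, your parenthetical ``$W^{I_{\ell}}$ is divisible precisely when $T^{I_{\ell}}$ is saturated in $V^{I_{\ell}}$'' is not the right criterion: $T^{I_{\ell}}=T\cap V^{I_{\ell}}$ is automatically saturated; the relevant condition is the equality $W^{I_{\ell}}=(V^{I_{\ell}}+T)/T$ discussed above.
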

\begin{proof} For the convenience of the reader we include the proof, however one can also consult the proof of Lemma 3.4 in \cite{PollackWeston11}. 

(i)  Set $\varrho:=\rho_f(i-k)$ ($i=1,2$). We choose a basis $\{e_1, e_2\}$ for the space $V=V_{\varrho}$ so that $\varrho$ with respect to that basis  is valued in $\GL_2(\Oo)$. We set $T=\Oo e_1 \oplus \Oo e_2$ (a $G_{\Sigma}$-stable $\Oo$-lattice in $V$) and set $W=V/T$. 

One has by our assumption and by Theorem 3.26(3)(b) of \cite{Hida00} that  \be\label{min for rho 2} \rho_f|_{G_{\bfQ_{\ell}}} \cong_E \bmat \psi\epsilon &* \\ & \psi \emat, \quad \rho_f|_{I_{\ell}} \neq 1,\ee where $\psi: G_{\bfQ_{\ell}} \to E^{\times}$ denotes the unramified character sending $\Frob_{\ell}$ to $a_{\ell}(f)$. 
So,  there exists $A\in \GL_2(E)$ such that $$A\varrho|_{G_{\ell}}A^{-1} = \bmat \psi\epsilon^{i-k+1} & * \\ & \psi\epsilon^{i-k}\emat.$$ First let's note that $\varrho$ is only tamely ramified at $\ell$ and so the image of $I_{\ell}$ is pro-cyclic. Suppose that $g \in I_{\ell}$ is such that $\varrho(g)$ topologically generates the image of $I_{\ell}$. Conjugating $\varrho(g)$ by a matrix of the form $\diag(u\varpi^n, 1)$ for a suitable $n$ and unit $u$ we may assume that $A\varrho(g)A^{-1} = \bmat 1&1\\ & 1 \emat\in \GL_2(\Oo).$ Write $A=\bmat a&b\\ c&d \emat$. By enlarging $E$ (so that $\sqrt{\det A}\in E$) we may also assume that $\det A=1$. 
 So, $$\varrho(g) = A^{-1}\bmat 1&1\\ & 1\emat A = \bmat 1+cd & d^2 \\ -c^2 & 1-cd\emat.$$ Since $\varrho$ is valued in $\GL_2(\Oo)$ we must have $c^2\in \Oo$ and $d^2\in \Oo$. Since $\Oo$ is integrally closed in $E$ this implies $c, d \in \Oo$. 

Since $\ov{\rho}_f$ is ramified at $\ell$ so is $\ov{\varrho}$, i.e., we must have that either $c \in \Oo^{\times}$ or $d \in \Oo^{\times}$. Let $v\in V$. Then $v +T \in W^{I_{\ell}}$ if and only if $g \cdot v - v \in T$. Writing $v$ in the basis $\{e_1, e_2\}$ as $v=\bmat x\\ y\emat$ the above relation assumes the form: $\bmat x \\ y \emat + \bmat \Oo \\ \Oo \emat \in W^{I_{\ell}}$ if and only if 
 $$\bmat cdx +d^2y\\ -c^2x - cd y \emat=\varrho(g)\bmat x\\ y \emat -\bmat x \\ y \emat \in \bmat \Oo \\ \Oo \emat$$ which happens if and only if $$d(cx+dy) \in \Oo \quad \textup{and}\quad -c(cx+dy) \in \Oo.$$ Now suppose $c \in \Oo^{\times}$. Then from the second relation we get $cx+dy \in \Oo$. Since this condition also implies that the first expression is in $\Oo$, we get that  in this case $\bmat x\\y \emat +T \in W^{I_{\ell}}$ if and only if $cx+dy\in \Oo$. Similarly if $d \in \Oo^{\times}$, then we again get the exact same condition. Hence we conclude that $$W^{I_{\ell}} = \left\{\bmat x \\ y \emat +\bmat \Oo \\ \Oo \emat \mid x,y\in E,  cx+dy \in \Oo\right\}.$$ Suppose $c\in \Oo^{\times}$. Then $$W^{I_{\ell}} = \left\{\bmat -c^{-1}dy \\ y \emat +\bmat \Oo \\ \Oo \emat \mid y \in E\right\}.$$  Consider the map from the module on the right to $E/\Oo$ given by $$\bmat -c^{-1}dy \\ y \emat +\bmat \Oo \\ \Oo \emat \mapsto y+\Oo.$$ It is clearly a map of $\Oo$-modules and clearly surjective. Suppose that $\bmat -c^{-1}dy \\ y \emat +\bmat \Oo \\ \Oo \emat \in \ker$. Then $y \in \Oo$, which implies that $c^{-1}dy\in \Oo$, so the map is also injective, hence an isomorphism. Analogous map works in the case when $d\in \Oo^{\times}$. Since $E/\Oo$ is divisible this proves that $W^{I_{\ell}}$ is divisible.

(ii)
Let $T, W$ be as in the proof of (i). Clearly if $v \in W^{G_{\Sigma}}$, then some $\Oo$-multiple of $v$ is a non-zero element in $W[\varpi]$. Hence it is enough to show that $W[\varpi]^{G_{\Sigma}}=0$. However, we have (see e.g.   \cite{Rubin00}, Lemma 1.2.2(iii)) $W[\varpi] \cong T/\varpi T \cong \ov{\varrho}$, so the claim follows from the irreducibility of $\ov{\rho}_f$. 
\end{proof}

\begin{cor} \label{l not equal to p} Let $\rho_f$ be as in Proposition \ref{invariants}. For $i=1,2$ one has $$H^1_{f, {\rm min}}(\bfQ, \ov{\rho}_f(i-k)) = H^1_{f, {\rm ur}}(\bfQ, \ov{\rho}_f(i-k)).$$
\end{cor}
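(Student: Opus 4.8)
The plan is to deduce this immediately from Proposition \ref{invariants}(i) together with Proposition \ref{torsion coeffs}(2). First I would fix a lattice $T \subseteq V := V_{\rho_f(i-k)}$ (for instance the one used in the proof of Proposition \ref{invariants}) and set $W := V/T = \rho_f(i-k) \otimes E/\Oo$, so that $W_1 = W[\varpi] \cong T/\varpi T \cong \ov{\rho}_f(i-k)$ as $G_{\Sigma}$-modules; the last identification depends a priori on the choice of lattice, but since $\ov{\rho}_f$ (hence $\ov{\rho}_f(i-k)$) is absolutely irreducible it is in fact independent of that choice. Note also that both Selmer groups in the statement carry the \emph{same} local condition at $p$; the subscripts ``${\rm min}$'' and ``${\rm ur}$'' affect only the local conditions at the primes $\ell \in \Sigma \setminus \{p\}$, where on $W_1$ the ``${\rm min}$'' structure is by definition the one induced (in the sense of Remark \ref{two choices}, following \cite{Rubin00}) from $H^1_{f, {\rm min}}(\bfQ_{\ell}, W)$, while the ``${\rm ur}$'' structure is $H^1_{\rm ur}(\bfQ_{\ell}, W_1)$.

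The key input is that $W^{I_{\ell}}$ is divisible for every $\ell \in \Sigma \setminus \{p\}$: this is exactly Proposition \ref{invariants}(i), whose hypotheses ($\ov{\rho}_f$ absolutely irreducible and ramified at every $\ell \mid N$) are in force here since $\rho_f$ is taken ``as in Proposition \ref{invariants}''. Given this divisibility, Remark \ref{two choices} shows that at each $\ell \in \Sigma \setminus \{p\}$ one has $H^1_{f, {\rm min}}(\bfQ_{\ell}, W_1) = H^1_{\rm ur}(\bfQ_{\ell}, W_1)$, so the two global Selmer groups are cut out by identical local conditions at every place; this is precisely the assertion of Proposition \ref{torsion coeffs}(2) specialized to $n = 1$, which yields $H^1_{f, {\rm ur}}(\bfQ, W_1) = H^1_{f, {\rm min}}(\bfQ, W_1)$. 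Substituting $W_1 \cong \ov{\rho}_f(i-k)$ then gives the corollary.

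I do not expect any real obstacle here. The substantive work — the explicit shape $(\ref{min for rho 2})$ of $\rho_f|_{G_{\bfQ_{\ell}}}$ and the integrality argument forcing divisibility of $W^{I_{\ell}}$ — has already been carried out inside the proof of Proposition \ref{invariants}, and what remains is a formal manipulation of finite/singular structures. The only point that deserves a sentence of care is the bookkeeping confirming that the mod-$\varpi$ local conditions appearing in the two Selmer groups are genuinely the ``induced'' ones, so that Remark \ref{two choices} (equivalently Proposition \ref{torsion coeffs}(2)) applies verbatim with $n = 1$.
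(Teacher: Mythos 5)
Your proposal is correct and is essentially the paper's own argument: the paper proves the corollary by citing Proposition \ref{invariants} (for divisibility of $W^{I_{\ell}}$ at $\ell \in \Sigma\setminus\{p\}$) together with Proposition \ref{torsion coeffs}(1)+(2), which is exactly the route you take, just spelled out in more detail. The extra bookkeeping you include (the identification $W_1 \cong \ov{\rho}_f(i-k)$ and the observation that the two structures already agree at $p$) is harmless and consistent with what the paper leaves implicit.
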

\begin{proof} This follows from Propositions \ref{invariants} and \ref{torsion coeffs}(1)+(2). \end{proof}

\begin{prop} \label{Selmer groups 2} Let $\rho_f : G_{\Sigma} \to \GL_2(E)$ be the Galois representation attached to a newform $f \in S_{2k-2}(N)$ with $p \nmid N$.  Suppose also that the residual representation $\ov{\rho}_f$ is   absolutely irreducible and ramified at every prime $\ell \mid N$.  One has $H^1_{\Sigma}(\bfQ, {\rho}_f(2-k)\otimes E/\Oo) = H^1_f(\bfQ,{\rho}_f(2-k)\otimes E/\Oo)$ if
 $N$ is square-free, $\Sigma=\{\ell \mid N\}\cup \{p\}$, and $p \nmid 1+w_{f, \ell}\ell$ for all primes $\ell \mid N$. Here $w_{f,\ell}\in \{1, -1\}$ is the eigenvalue of the Atkin-Lehner involution at $\ell$. 
 \end{prop}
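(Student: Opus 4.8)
The plan is to compare the "relaxed-at-$\ell\mid N$" Selmer group with the one imposing the unramified (equivalently minimal, by Corollary \ref{l not equal to p} and Proposition \ref{invariants}(i)) condition at every $\ell\mid N$, and to show that the two agree. Writing $W={\rho}_f(2-k)\otimes E/\Oo$, by definition $H^1_f(\bfQ,W)$ is the subgroup of $H^1_\Sigma(\bfQ,W)$ consisting of classes whose restriction to $G_{\bfQ_\ell}$ lies in $H^1_{f,{\rm min}}(\bfQ_\ell,W)$ for each $\ell\in\Sigma\setminus\{p\}$. So it suffices to prove that for every $\ell\mid N$ and every global class $c\in H^1_\Sigma(\bfQ,W)$, the local restriction $c_\ell:={\rm res}_\ell(c)$ already lies in $H^1_{\rm ur}(\bfQ_\ell,W)$; in other words, that the local cohomology group $H^1(\bfQ_\ell,W)$ carries no ramified classes at all, i.e. $H^1(\bfQ_\ell,W)=H^1_{\rm ur}(\bfQ_\ell,W)$ — or, what is enough, that the image of $H^1_\Sigma(\bfQ,W)$ in $H^1(\bfQ_\ell,W)/H^1_{\rm ur}(\bfQ_\ell,W)$ is trivial.

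First I would pin down the local structure of $W$ at $\ell$. By \eqref{min for rho 2} we have $\rho_f|_{G_{\bfQ_\ell}}\cong\bigl(\begin{smallmatrix}\psi\epsilon&*\\&\psi\end{smallmatrix}\bigr)$ with $\psi$ unramified sending $\Frob_\ell$ to $a_\ell(f)$, and since $N$ is squarefree and $f$ is a newform at $\ell$ one has $a_\ell(f)^2=\ell^{2k-4}$, indeed $a_\ell(f)=-w_{f,\ell}\ell^{k-2}$; hence on $\rho_f(2-k)$ the unramified character $\psi\epsilon^{2-k}$ sends $\Frob_\ell$ to $a_\ell(f)\ell^{2-k}=-w_{f,\ell}$, a root of unity, while $\psi\epsilon^{1}\epsilon^{2-k}=\psi\epsilon^{3-k}$ sends $\Frob_\ell$ to $-w_{f,\ell}\ell$. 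So $W$ sits in an exact sequence $0\to W_1\to W\to W_2\to 0$ of $G_{\bfQ_\ell}$-modules with $W_1=(E/\Oo)(\psi\epsilon^{3-k})$ and $W_2=(E/\Oo)(\psi\epsilon^{2-k})$, the latter unramified. Next I would run the inflation–restriction / local Euler-characteristic bookkeeping: $H^1(\bfQ_\ell,W)/H^1_{\rm ur}(\bfQ_\ell,W)$ injects (via the singular quotient) into $H^1(I_\ell,W)^{\Frob_\ell}$, and by the structure above the only possible source of ramified classes is from $W_1$; a ramified class in $H^1(\bfQ_\ell,W)$ surviving in the singular quotient would produce a nonzero $\Frob_\ell$-invariant in $H^1(I_\ell,W_1(-1))$-type data, which forces an eigenvalue congruence $\psi\epsilon^{3-k}(\Frob_\ell)\equiv 1$ or $\psi\epsilon^{2-k}(\Frob_\ell)\equiv 1 \pmod\varpi$, i.e. $p\mid 1-w_{f,\ell}\ell$ or $p\mid 1+w_{f,\ell}\ell\cdot(\text{something})$. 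More precisely, the Tate local duality pairing identifies the singular quotient $H^1(\bfQ_\ell,W)/H^1_{\rm ur}(\bfQ_\ell,W)$ with the Pontryagin dual of $H^1_{\rm ur}(\bfQ_\ell,W^\vee(1))$, which is $(W^\vee(1))^{I_\ell}/(\Frob_\ell-1)$, and the relevant Frobenius eigenvalues are $1/(-w_{f,\ell})$ and $1/(-w_{f,\ell}\ell)\cdot\ell=1/(-w_{f,\ell})$ twisted appropriately — so nonvanishing would force $p\mid 1+w_{f,\ell}\ell$ (or $p\mid w_{f,\ell}^2\ell^2-1$, hence again $p\mid 1+w_{f,\ell}\ell$ or $p\mid 1-w_{f,\ell}\ell$; and $p\mid 1-w_{f,\ell}\ell$ together with $p\nmid 1+w_{f,\ell}\ell$ still needs to be excluded, which I'd handle by noting that the ramified part of $W$ is exactly $W_1=(E/\Oo)(\psi\epsilon^{3-k})$ so only the $\psi\epsilon^{2-k}$-eigenvalue $-w_{f,\ell}$ of the unramified quotient enters, giving precisely $p\mid 1+w_{f,\ell}\ell$ from the $W_1$ contribution after the Tate twist). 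The hypothesis $p\nmid 1+w_{f,\ell}\ell$ therefore kills the singular quotient, so $H^1(\bfQ_\ell,W)=H^1_{\rm ur}(\bfQ_\ell,W)$ and the two Selmer groups coincide.

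The main obstacle I anticipate is getting the local computation at $\ell$ exactly right: correctly identifying which of the two Jordan–Hölder characters of $W|_{G_{\bfQ_\ell}}$ is ramified and which is unramified, tracking the Tate twist through local duality, and checking that the \emph{divisibility} of $W^{I_\ell}$ (Proposition \ref{invariants}(i)) together with the structure \eqref{min for rho 2} really does reduce everything to the single congruence condition $p\nmid 1+w_{f,\ell}\ell$ rather than a pair of conditions. I would organize this by first treating $H^1$ of the unramified quotient $W_2$ and the ramified sub $W_1$ separately via the long exact sequence for $0\to W_1\to W\to W_2\to 0$, showing $H^1(\bfQ_\ell,W_2)=H^1_{\rm ur}$ automatically (unramified module, and $H^0(\bfQ_\ell,W_1)=0$ because $-w_{f,\ell}\ell\not\equiv 1$), and then showing the ramified contributions from $W_1$ land in $H^1_{\rm ur}(\bfQ_\ell,W)$ modulo the stated congruence. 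Once the local equality $H^1(\bfQ_\ell,W)=H^1_{\rm ur}(\bfQ_\ell,W)$ is established for each $\ell\mid N$, the global statement is immediate from the definitions of $H^1_\Sigma$ and $H^1_f$.
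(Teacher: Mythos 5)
Your overall strategy is the same as the paper's: reduce the statement to showing that, for each $\ell\mid N$, the local singular quotient $H^1(\bfQ_\ell,W)/H^1_{\rm ur}(\bfQ_\ell,W)$ receives no global classes, and deduce this from the divisibility of $W^{I_\ell}$ (Proposition \ref{invariants}(i)) together with the computation, via \eqref{min for rho 2} and $a_\ell(f)=-w_{f,\ell}\ell^{k-2}$, that $\det(1-\Frob_\ell|_{(V^*)^{I_\ell}})=1-a_\ell(f)\ell^{3-k}=1+w_{f,\ell}\ell$ is a $p$-adic unit. The paper packages the local-duality bookkeeping as a citation to Lemma 5.6 of \cite{BergerKlosin13} and also records why the answer is independent of the choice of lattice $T$; you re-derive the duality step by hand, which is legitimate.

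The one place your argument is wrong as written is the justification for why only the single congruence $p\nmid 1+w_{f,\ell}\ell$ appears. You assert that ``the ramified part of $W$ is exactly $W_1=(E/\Oo)(\psi\epsilon^{3-k})$'' and that the quotient $W_2$ contributes nothing ``automatically'' because it is unramified. Both claims are false: for $\ell\neq p$ the cyclotomic character is unramified at $\ell$, so \emph{both} Jordan--H\"older characters $\psi\epsilon^{3-k}$ and $\psi\epsilon^{2-k}$ of $W|_{G_{\bfQ_\ell}}$ are unramified, and the ramification of $W$ lives entirely in the extension class coming from the $*$ in \eqref{min for rho 2}. Moreover the singular quotient of an unramified divisible module $M$ is $(M(-1))^{\Frob_\ell=1}$, which need not vanish; for $M=W_2$ it vanishes exactly when $-w_{f,\ell}\ell^{-1}\not\equiv 1\pmod{\varpi}$, i.e.\ again when $p\nmid 1+w_{f,\ell}\ell$ --- so not ``automatically.'' The correct reason a single condition suffices is that ramifiedness of the extension ($\ov{\rho}_f$ ramified at $\ell$) forces $(V^*)^{I_\ell}$ to be precisely the rank-one subrepresentation of $V^*\cong V$, on which $\Frob_\ell$ acts by $a_\ell(f)\ell^{3-k}=-w_{f,\ell}\ell$; local duality only sees these inertia invariants, so the quotient eigenvalue $-w_{f,\ell}$ never enters. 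With that correction your argument closes up and agrees with the paper's proof.
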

\begin{rem} Note that when $N$ is prime $w_{f,N}=(-1)^{k-1}W_f$, where  $W_f \in \{1, -1\}$ is the sign of the functional equation of $f$ (see e.g. \cite{Shimura71} Theorem 3.66). \end{rem}

\begin{proof} We use Lemma 5.6 in \cite{BergerKlosin13} with $V=\rho_f(2-k)$. Then $V^*=V^{\vee}(1)=V$. 

By \eqref{min for rho 2} we have that $(V^*)^{I_{\ell}}=\psi (3-k)$
and hence we get (using the notation of \cite{BergerKlosin13}) $$P_{\ell}(V^*,1)=\det(1-\Frob_{\ell}|_{(V^*)^{I_{\ell}}})=1-a_{\ell}(f)\ell^{3-k}.$$ By Theorem 3 in \cite{AtkinLehner} (but with Hecke action normalized as in \cite{Miyake89}, see \cite{Miyake89} Theorem 4.6.17(2) for the correct power of $\ell$ in our normalization) we know that $a_{\ell}(f)=-\ell^{k-2}w_{f, \ell}$. This shows that under the conditions of the Proposition we have $\val_p(P_{\ell}(V^*,1))=0$ for all $\ell \nmid N$. 
To be able to use Lemma 5.6 in \cite{BergerKlosin13}, we also need to know that $W^{I_{\ell}}$ is divisible, 
 where $W=V/T$ with $V$ as above  (i.e, the two dimensional vector space over $E$ on which $G_{\Sigma}$ acts via $\rho_f(2-k)$) and $T \subset V$ a $G_{\Sigma}$-stable $\Oo$-lattice.  However, this follows from Proposition \ref{invariants}(i). We note here that the order of the Selmer group does not depend on the choice of $T$ since $\rho$ is  absolutely irreducible (cf. e.g., Lemma  9.7 in \cite{Klosin09}).
\end{proof}

\begin{prop} \label{class groups} One has $$H^1_f(\bfQ,\bfF(1)) =  H^1_{f}(\bfQ, \bfF(-1)) =   0.$$ \end{prop}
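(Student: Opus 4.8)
The plan is to interpret both Selmer groups as subgroups of the class group of $\bfQ(\mu_p)$ (more precisely of an appropriate eigenspace of its $p$-torsion) and then invoke either genuine class-number facts for the small cyclotomic fields in play — recall $p > 2k-2 \geq 2$ only tells us $p \geq 5$, so this needs an argument, not a table lookup — or, more robustly, Herbrand--Ribet together with the vanishing of the relevant Bernoulli numbers/$L$-values in the range forced by $p > 2k-2$. Concretely, $\bfF(1)$ and $\bfF(-1)$ are the mod $p$ cyclotomic character and its inverse, i.e. $\chi$ and $\chi^{-1} = \chi^{p-2}$.

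First I would unwind the definition. A class in $H^1_f(\bfQ, \bfF(j))$ (with the unramified/minimal structure away from $p$ and the crystalline, i.e. Fontaine--Laffaille/finite, structure at $p$) corresponds to an extension of $\bfF$ by $\bfF(j)$ as $G_{\bfQ}$-modules which is unramified outside $p$, crystalline at $p$, and unramified at the (empty set of) bad primes since there is no level here. Restricting to $G_{\bfQ(\mu_p)}$ and using inflation-restriction, such a class cuts out (after a twist) an abelian $p$-extension of $\bfQ(\mu_p)$ unramified everywhere except possibly above $p$; the crystalline condition at $p$ — here one uses that $j=1$ and $j=-1$ lie in the Fontaine--Laffaille range $0 \le j \le p-2$ strictly, so the local condition is the natural one and, by the computation of $H^1_f(\bfQ_p, \bfF(j))$ via $\mathrm{Ext}^1_{\mathcal{MF}_\Oo}$, forces the extension to be unramified at $p$ as well when $j \ne 0, 1$ in the appropriate sense, or at worst peu-ramifiée. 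So $H^1_f(\bfQ, \bfF(j))$ injects into $\mathrm{Hom}_{\Gal(\bfQ(\mu_p)/\bfQ)}(\Cl(\bfQ(\mu_p)) \otimes \bfF, \bfF(j-1))$ — i.e. into the $\chi^{j-1}$-eigenspace of the $p$-class group of $\bfQ(\mu_p)$.

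Next I would identify the eigenspaces. For $\bfF(1)$: the relevant eigenspace is the $\chi^0$-eigenspace, which is (a piece of) the class group of $\bfQ$ itself tensored up — this is trivial. (Alternatively: $H^1_f(\bfQ, \bfF(1)) \hookrightarrow H^1_f(\bfQ, \bfQ_p/\bfZ_p(1)) = (\Z[1/p]^\times \widehat{\otimes} \bfQ_p/\bfZ_p)$-type object, which by Kummer theory and the crystalline/unramified condition is $\bfZ_p^\times$-related and has no $p$-torsion of the required kind; concretely the finite condition at $p$ kills the cyclotomic unit contribution.) For $\bfF(-1) = \bfF(p-2)$: the eigenspace is $\chi^{-2} = \chi^{p-3}$-component of the class group. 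By the reflection (Spiegelung) theorem this is controlled by the $\chi^{2}$-component, which by Herbrand--Ribet is nontrivial only if $p \mid B_{p-3}$... wait — one must be careful about even versus odd and about which eigenspace. The clean statement to use: by Herbrand's theorem the $\omega^{1-n}$-eigenspace of the class group is trivial whenever $p \nmid B_n$, and for the small values $n$ corresponding to $j = \pm 1$ in our range $p > 2k-2$, one has $p \nmid B_n$ because $|B_n|$ has numerator too small — this is exactly where $p$ large relative to the weight is used. I would cite Washington's book (Cyclotomic Fields) for the Herbrand--Ribet input and for the fact that $H^1_f(\bfQ, \bfF(1)) = H^1_f(\bfQ, \bfF(-1)) = 0$ in this range; this is a standard ingredient (it appears e.g. in Diamond--Flach--Guo and in \cite{BergerKlosin13}).

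The main obstacle — really the only nontrivial point — is getting the \emph{finite} (crystalline) condition at $p$ exactly right so that the injection into the class-group eigenspace is valid: one must check that the Fontaine--Laffaille $H^1_f(\bfQ_p, \bfF(\pm 1))$ matches the "unramified-at-$p$ after untwisting" condition on the cyclotomic side, i.e. that there is no room for peu-ramifiée classes that would survive. This is handled by Proposition \ref{torsion coeffs}(3) identifying $H^1_f(\bfQ_p, W_1)$ with the image of $\mathrm{Ext}^1_{\mathcal{MF}_\Oo}(1_{\rm FD}, D)$, computing that $\mathrm{Ext}$ group for $D$ the rank-one Dieudonné module of weight $\pm 1$ (which, since $1$ and $-1$ are strictly inside $[0, p-2]$, behaves as expected), and comparing dimensions via the local Euler characteristic formula; everything else is bookkeeping with inflation-restriction and Herbrand--Ribet.
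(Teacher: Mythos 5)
Your overall route is the paper's: reduce both groups to eigenspaces of the $p$-part of the class group of $\bfQ(\mu_p)$ and kill them with Washington's Proposition 6.16 together with Herbrand's theorem (Theorem 6.17 there), after checking that the minimal/unramified and crystalline local conditions let you do so. However, three concrete points in your writeup are off, and the second one reflects a real misunderstanding of what the hypothesis $p>2k-2$ is doing.

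First, the eigenspace bookkeeping is off by one: restricting a class in $H^1_f(\bfQ,\bfF(j))$ to $G_{\bfQ(\mu_p)}$ lands you in $\Hom_{\Gal(\bfQ(\mu_p)/\bfQ)}(\Cl(\bfQ(\mu_p))\otimes\bfF,\bfF(j))$, i.e.\ the $\chi^{j}$-eigenspace $A^{(j)}$, not the $\chi^{j-1}$-eigenspace; correspondingly the clean form of Herbrand is that $A^{(-n)}$ (not $A^{(1-n)}$) vanishes when $p\nmid B_n$. Second, no weight-dependent Bernoulli number enters and $p>2k-2$ is not used here at all: for $\bfF(-1)=\bfF(\chi^{p-2})$ the relevant eigenspace is $A^{(p-2)}$ and Herbrand requires $p\mid B_{p-(p-2)}=B_2=1/6$, whose numerator is $1$; for $\bfF(1)$ one uses $A^{(1)}=0$ unconditionally (Washington 6.16). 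Your claim that "this is exactly where $p$ large relative to the weight is used" would, if taken seriously, make the proof depend on a hypothesis it does not need. Third, the step you yourself flag as the main obstacle is left genuinely open in the case of $\bfF(1)$: the crystalline (Fontaine--Laffaille) condition at $p$ for an extension of $\bfF$ by $\bfF(1)$ is the peu-ramifi\'ee/unit-Kummer condition, which is \emph{not} unramified at $p$, so such a class need not cut out an unramified extension of $\bfQ(\mu_p)$ and need not inject into a class-group eigenspace. One must add the Kummer-theoretic observation that a class unramified outside $p$ and peu-ramifi\'ee at $p$ comes from $a\in\bfQ^\times$ with every valuation divisible by $p$, hence from $\bfZ^\times=\{\pm1\}$ modulo $p$-th powers, which is trivial for $p$ odd (equivalently, one computes $H^1_f(\bfQ,\bfQ_p/\bfZ_p(1))$ and applies Proposition \ref{torsion coeffs}(1), which is how the paper phrases it). With these three repairs your argument closes up and coincides with the paper's.
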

\begin{proof}For the latter Selmer group  this is essentially Proposition 6.16 along with Theorem 6.17 in \cite{Washingtonbook} which tell us that the $p$-part of the class group of the splitting field of $\chi^{-1}$ is trivial (as the numerator of $B_2$ (the second Bernoulli number) is 1). 
This implies that $H^1_{f, {\rm min}}(\bfQ, W_1)=0$ for $W=\bfQ_p/\bfZ_p(-1)$.  By Proposition \ref{torsion coeffs} we also have that $H^1_{f, {\rm min}}(\bfQ, W_1)=H^1_{f, {\rm ur}}(\bfQ, W_1)=H^1_{f}(\bfQ, W_1)$ since $W^{I_{\ell}}=0$ for all $\ell \neq p$.

On the other hand \cite{Rubin00} Proposition 1.6.4(ii)  tells us that $H^1_f(\bfQ, \bfQ_p/\bfZ_p(1))$ fits into a short exact sequence
$$0 \to \bfZ^{\times} \otimes \bfQ_p/\bfZ_p \to H^1_f(\bfQ, \bfQ_p/\bfZ_p(1)) \to {\rm Cl}(\bfQ) \to 0.$$ This implies that $H^1_f(\bfQ, \bfQ_p/\bfZ_p(1))=0$, and therefore $H^1_f(\bfQ, \bfF(1))=H^1_f(\bfQ, \bfQ_p/\bfZ_p(1))[p]=0$.
\end{proof}

\subsection{Computing the non-critical Selmer group} \label{noncritical}

We will be interested in bounding from above the order of $H^1_f(\bfQ, \rho_f(1-k) \otimes E/\Oo)$. In the case that $k=2$ the motif $\rho_f(1-k)$ is not critical in the sense of Deligne. However, we will show how it can still be bounded by a certain $p$-adic $L$-value using Iwasawa theory for $p \neq 3$.

Let $\omega$ be the Teichmüller lift of $\chi$. Suppose $f \in S_2(\Gamma_0(N))$ (with $p \nmid N$) is a newform ordinary at $p$ and that $\ov{\rho}_f$ is absolutely irreducible and ramified at all primes $\ell \mid N$. We will denote by $L_p^{\rm an}(f, \omega^{-1}) \in \Oo[[T]]$ the Manin-Vishik $p$-adic $L$-function for $f \otimes \omega^{-1}$ (see e.g. \cite{Kato04} Theorem 16.2 where the $p$-adic $L$-function is defined for  the $p$-stabilisation of $f$  as an element of $\tilde \Lambda=\Oo[\Delta][[T]]$; we take its projection onto the component of $\tilde \Lambda$ where $\Delta$ acts by $\omega^{-1}$). 
\begin{prop} \label{prop3.8}
For $p\neq 3$ we have $$\val_p(\#H^1_f(\bfQ, \rho_f(-1)\otimes E/\Oo)) \leq \val_p \#(\Oo/L_p^{\rm an}(f, \omega^{-1}, T=p)).$$ 
\end{prop}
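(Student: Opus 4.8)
The plan is to bound $\#H^1_f(\bfQ, \rho_f(-1)\otimes E/\Oo)$ by relating it, via Iwasawa theory, to a value of the analytic $p$-adic $L$-function of $f$. The object $\rho_f(-1)$ corresponds to a Tate twist that is \emph{not} critical (for $k=2$), so one cannot directly appeal to a Bloch--Kato type formula with a classical $L$-value; instead one must work on the cyclotomic tower. First I would consider the cyclotomic $\bfZ_p$-extension $\bfQ_\infty/\bfQ$ with Galois group $\Gamma \cong \bfZ_p$ and the corresponding Iwasawa module. The relevant Selmer group over $\bfQ_\infty$, built from $\rho_f \otimes E/\Oo$ with the appropriate (Greenberg, ordinary) local conditions at $p$, is a finitely generated torsion $\Lambda$-module $\mathfrak{X}$ whose characteristic ideal is controlled by Kato's divisibility \cite{Kato04} towards the Main Conjecture: $\mathrm{char}_\Lambda(\mathfrak{X})$ divides the ideal generated by $L_p^{\rm an}(f)$ (up to the usual adjustments with the $\omega$-components and the $p$-stabilization). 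Since $f$ is ordinary at $p$ and $\ov\rho_f$ is absolutely irreducible and ramified at all $\ell\mid N$, the hypotheses needed for Kato's theorem and for the absence of extra local error terms are in place.

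Next I would descend from $\bfQ_\infty$ back to $\bfQ$, picking out the component/twist that gives $\rho_f(-1)$: the cyclotomic twist by $1$ corresponds to evaluating the Iwasawa-theoretic objects at the point $T = p$ (or $T = \varpi$ up to normalization) after projecting onto the $\omega^{-1}$-eigenspace of $\Delta = \Gal(\bfQ(\mu_p)/\bfQ)$, which is why the statement features $L_p^{\rm an}(f,\omega^{-1}, T=p)$. Here the key technical ingredient is a control theorem: the specialization of $\mathfrak{X}$ (or its Pontryagin dual) at this point surjects onto, or has the same order as, $H^1_f(\bfQ, \rho_f(-1)\otimes E/\Oo)$, with the kernel/cokernel of the specialization map being controlled (and in good cases trivial, because $\rho_f(-1)$ has no $G_\bfQ$-invariants by Proposition~\ref{invariants}(ii), and the local conditions match up). The paper signals that this is obtained by "slightly adapting a control theorem of \cite{SkinnerUrban14}"; so I would invoke that control result, checking that the local condition at $p$ used there (an ordinary/Greenberg condition) agrees, up to finite index one can absorb, with the crystalline condition $H^1_f(\bfQ_p, -)$ appearing in the definition of our Selmer group — this agreement for ordinary representations is standard. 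Combining: $\val_p \#H^1_f(\bfQ, \rho_f(-1)\otimes E/\Oo) \le \val_p \#\bigl(\Lambda/(\mathrm{char}, \text{specialization})\bigr) \le \val_p \#\bigl(\Oo/L_p^{\rm an}(f,\omega^{-1},T=p)\bigr)$, where the first inequality is the control theorem and the second is Kato's divisibility specialized at $T=p$, provided the specialized value is nonzero (which is part of what makes the bound meaningful; if it vanishes there is nothing to prove about finiteness but then the statement is vacuous as an upper bound — one should note $L_p^{\rm an}(f,\omega^{-1},T=p)\neq 0$ under the running hypotheses, or else the inequality is read in $\val_p$ with the convention $\val_p(0)=\infty$).

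The main obstacle, I expect, is the control theorem: making precise that the finite-level Selmer group with the \emph{crystalline} condition at $p$ that we actually use coincides (in order, or at least is bounded) by the specialization at $T=p$ of the Iwasawa Selmer group with Greenberg's ordinary condition, and that the specialization map has trivial (or controlled) kernel and cokernel. One must check: (a) triviality of $H^0$ terms, which follows from absolute irreducibility of $\ov\rho_f$ (Proposition~\ref{invariants}(ii)); (b) that the local conditions at $\ell\mid N$ do not contribute error terms, using that $W^{I_\ell}$ is divisible (Proposition~\ref{invariants}(i)) and the Euler-factor computation in the proof of Proposition~\ref{Selmer groups 2}; (c) that the local condition at $p$ behaves well under specialization, where the ordinarity of $f$ at $p$ and $p > 2k-2 = 2$ (so Fontaine--Laffaille applies and crystalline $=$ ordinary-compatible) are used. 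The restriction $p \neq 3$ enters, as in \cite{SkinnerUrban14}, to guarantee that certain cohomological vanishing and the non-degeneracy of the relevant pairing hold; I would cite this rather than reprove it. Once these checks are in place, the chain of inequalities above yields the proposition.
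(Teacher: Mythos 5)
Your proposal follows essentially the same route as the paper: set up the Greenberg (ordinary) Selmer group over the cyclotomic $\bfZ_p$-extension, invoke Kato's divisibility toward the Main Conjecture for the $\omega^{-1}$-component, descend via an adapted Skinner--Urban control theorem specialized at $T=p$, and compare the Bloch--Kato crystalline condition at $p$ with the Greenberg condition (the paper does this via $H^1_f\subset H^1_g\hookrightarrow H^1_{\rm Gr}$, using Ochiai's diagram) while handling $\ell\mid N$ with Proposition~\ref{invariants}. The only point worth noting is that the paper needs just the one-sided containment of the Bloch--Kato Selmer group in the Greenberg one (not an equality of orders), and the $p\neq 3$ hypothesis enters specifically to ensure $\bfF(\omega^2)$ is ramified at $p$ in the no-pseudo-null-submodule step; otherwise your outline matches the paper's argument.
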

\begin{proof}
Write $T_f$ for a choice of a $G_{\Sigma}$-invariant lattice inside the Galois representation $V_f=\rho_f :G_{\Sigma}\to \GL_2(E)$. Since $f$ is ordinary there is a unique $D_p$-stable line   $V_f^+$  of $V_f$ on which $I_p$ acts via $\epsilon$.  Set $T_f^+:= T_f \cap V_f^+$. Following \cite{SkinnerUrban14} (p. 34) for any continuous character $\xi: G_{F,\Sigma} \to \Oo^{\times}$ we define the Greenberg Selmer group $$\Sel_{F, E}^{\Sigma}(f, \xi):= \ker\left\{H^1(G_{F,\Sigma}, T_f\otimes \xi\otimes E/\Oo)\to  H^1(I_p, (T_f/T_f^+)\otimes \xi \otimes E/\Oo)\right\},$$  where $F=\bfQ$ or $F=\bfQ_{\infty}$, the cyclotomic $\bfZ_p$-extension of $\bfQ$. We are interested in the case where $\xi=\omega^{m-1}\epsilon^{-m}$ for $m=1$, while Skinner and Urban are mainly interested in the case where $\xi=\omega^m \epsilon^{-m}$. We set $X_{F, E}^{\Sigma}(f, \xi):=\Hom_{\Oo}(\Sel_{F, E}^{\Sigma}(f, \xi), E/\Oo)$ to be the Pontryagin dual. By a theorem due to Kato (\cite{SkinnerUrban14}, Theorem 3.15) we know that $X_{\bfQ_{\infty}, E}^{\Sigma}(f, \xi)$ is a torsion $\Lambda$-module, where $\Lambda=\Oo[[\Gal(\bfQ_{\infty}/\bfQ)]]=\Oo[[T]]$. Set $Ch_{\bfQ_{\infty}, E}^{\Sigma}(f, \omega^{-1})$ to be the characteristic ideal of $X_{\bfQ_{\infty}, E}^{\Sigma}(f, \omega^{-1})$. The Main Conjecture of Iwasawa Theory states that $Ch_{\bfQ_{\infty}, E}^{\Sigma}(f, \omega^{-1})$ is a principal ideal generated by $L_p^{\rm an}(f, \omega^{-1}, T)$ (cf. Conjecture 3.24 in \cite{SkinnerUrban14}). By Theorem 17.14 in \cite{Kato04} (but see also Theorem 3.25 in \cite{SkinnerUrban14}, whose notation is consistent with ours) the ideal $Ch_{\bfQ_{\infty}, E}^{\Sigma}(f, \omega^{-1})$ divides $L_p^{\rm an}(f, \omega^{-1}, T)$ in $\Lambda$. Note that the assumption in Theorem 17.14 in \cite{Kato04} that the image of $\rho_f$ contains $\SL_2(\bfZ_p)$ can be replaced by $\ov{\rho}_f$ being ramified at some $\ell \| N$, as explained in \cite{Skinner16} pages 187/8. 

To relate this to our $H^1_f(\bfQ, \rho_f(-1)\otimes E/\Oo)$ we need a control theorem which was proved by Skinner and Urban in the case $\xi=\omega^m\epsilon^{-m}$ (Corollary 3.21(i) in \cite{SkinnerUrban14} which should say $m \neq 0$ instead of $m \neq 1$). Let us only indicate here what changes are needed to adapt it to the case $\xi=\epsilon^{-1}=\omega^{-1} \cdot (\omega/\epsilon)^m$ with $m=1$. 
\begin{thm} [Control Theorem] \label{control} One has $$\#X_{\bfQ, E}^{\Sigma}(f, \epsilon^{-1}) \mid \# \Lambda/(T-p, Ch_{\bfQ_{\infty}, E}^{\Sigma}(f, \omega^{-1})).$$ \end{thm}
\begin{proof} Skinner and Urban prove the corresponding statement (for $\xi=\omega^m\epsilon^{-m}$) by a series of results culminating in Proposition 3.20 which is then used to prove Corollary 3.21 (here we only care about one divisibility as stated above instead of equality). The assumption that $f$ be $p$-stabilized in \cite{SkinnerUrban06} Section 3.3.13 is not necessary. The first result is Proposition 3.10. The proof of the only part  relevant for us, the injectivity of the map $\Sel_{\bfQ, E}^{\Sigma}(f, \epsilon^{-1}) \to \Sel_{\bfQ_{\infty}, E}^{\Sigma}(f, \epsilon^{-1})^{G_{\Sigma}}$,
carries over unchanged to our situation. As the next step we need an analogue of Proposition 3.20 (stating that $X_{\bfQ_{\infty}, E}^{\Sigma}(f, \omega^{-1})$ has no non-zero pseudo-null $\Lambda$-submodules) for which it is sufficient to prove corresponding analogues of Lemmas 3.18 and 3.19.  Let us explain what changes are needed to the proof of Lemma 3.18 which for us only holds for $p\neq 3$.  While in \cite{SkinnerUrban14} the module denoted there by $M^-[x]|_{I_p}$ is isomorphic to $E/\Oo(\omega^m\epsilon^{-m})$
we have  $M^-[x]|_{I_p} \cong E/\Oo(\omega^{m-1}\epsilon^{-m})$ (and $m=1$). The assumption that $p \neq 3$ is needed in our case to ensure that  $M^-[\fm]^*(1)=\bfF(\omega^2)$ is ramified at $p$ (note $\epsilon = \omega$ mod $\varpi$ and $\fm=(x, \varpi)$), where $*$ denotes the Pontryagin dual. Also in our case $M^-[x]^{I_p}=0$, hence in particular finite.  Lemma 3.19 follows the same way as in \cite{SkinnerUrban14}. Having all this, Corollary 3.21 is also proved the same way as in \cite{SkinnerUrban14}. \end{proof}

We continue with the proof of Proposition \ref{prop3.8}. As a corollary of the control theorem we obtain the following inequality: $$\#X_{\bfQ, E}^{\Sigma}(f, \epsilon^{-1}) \leq \# \Oo/L_p^{\rm an}(f, \omega^{-1}, T=p)$$ (cf. Theorem 3.36 in \cite{SkinnerUrban14}). Note that Skinner and Urban work within the critical range of the relevant $L$-function, while we are outside of this range, so in particular we do not relate the $p$-adic $L$-value to a classical one. 

Let us now show that $\#X_{\bfQ, E}^{\Sigma}(f, \epsilon^{-1}) \geq \#H^1_f(\bfQ, \rho_f(-1)\otimes E/\Oo))$. First note that  $\#X_{\bfQ, E}^{\Sigma}(f, \epsilon^{-1})=\# \Sel_{\bfQ, E}^{\Sigma}(f, \epsilon^{-1})$ since the group on the left is finite. We will show that $\Sel_{\bfQ, E}^{\Sigma}(f, \epsilon^{-1}) \supset H^1_f(\bfQ, \rho_f(-1)\otimes E/\Oo)$. The conditions at $\ell \neq p$ defining both groups coincide by combining   Propositions \ref{invariants}(i) and  \ref{torsion coeffs}(2), so it is enough to compare conditions at $p$. There we have $H^1_f(\bfQ_p, \rho_f(-1)\otimes E/\Oo)\subset H^1_g(\bfQ_p, \rho_f(-1)\otimes E/\Oo)$ which follows from (3.7) in \cite{BlochKato90} by taking images in the cohomology with divisible coefficients. On the other hand the diagram at the bottom of p. 87 in \cite{Ochiai00} tells us that $H^1_g(\bfQ_p, \rho_f(-1)\otimes E/\Oo)$ injects into \begin{eqnarray*} H^1_{\rm Gr} (\bfQ_p, \rho_f(-1)\otimes E/\Oo) &:=& \ker(H^1(\bfQ_p, T_f(-1)\otimes E/\Oo)\to H^1(I_p, T_f(-1)\otimes E/\Oo))\\&\hookrightarrow& \ker(H^1(\bfQ_p, T_f(-1)\otimes E/\Oo)\to H^1(I_p, (T_f/T_f^+)(-1)\otimes E/\Oo)).\end{eqnarray*} Indeed, $H^1_g(\bfQ_p, \rho_f(-1)\otimes E/\Oo)$ is defined as the image of $H^1_g(\bfQ_p, V_f(-1))$ in $H^1(\bfQ_p, T_f(-1)\otimes E/\Oo)$ and according to the diagram the defining map factors through $H^1_{\rm Gr} (\bfQ_p, \rho_f(-1)\otimes E/\Oo)$. The corresponding inclusions of global Selmer groups follow. 
\end{proof}

\section{Deformations} \label{Deformations}

Let $\rho: G_{\Sigma} \to \GL_2(\bfF)$ be a continuous  absolutely irreducible odd Galois representation of determinant $\chi^{2k-3}$, short crystalline at $p$ and such that for every prime $\ell \in \Sigma\setminus\{p\}$ one has $\rho|_{I_{\ell}} \cong \bmat 1&*\\ 0 & 1\emat \neq I_2$. 
Consider a residual (short crystalline) representation $$\ov{\sigma}:= \bmat \chi^{k-2} & a &b \\ & \rho & c \\ && \chi^{k-1}\emat: G_{\Sigma} \to \GL_4(\bfF),$$ where $a,c$ are assumed to be non-trivial classes in $H^1(\bfQ, \rho(1-k))$ (no assumption on $b$).

\begin{rem}\label{identification}
We have $$ \Hom(\rho, \chi^{k-2}) =\rho^{\vee} \otimes \chi^{k-2} =\rho \otimes \chi^{3-2k} \otimes \chi^{k-2} = \rho(1-k)=\Hom(\chi^{k-1}, \rho),$$ so $a$ and $c$ are indeed
both classes in $H^1(\bfQ, \rho(1-k))$. 
\end{rem}

Set $N_1=\bmat 0 \\ & 0 &1\\ &&0 \\ &&&0 \emat$ and for any prime $\ell \neq p$ let $t_p :I_{\ell} \to \bfZ_p(1)$  be the tame character. 
 We will always assume that $\ov{\sigma}$ as above is \emph{semi-abelian} by which we mean that for each $\ell \in \Sigma \setminus \{p\}$ one has 
\be \label{admissible} \ov{\sigma}|_{I_{\ell}} \cong  \exp (t_pN_1).\ee 
\begin{lemma} \label{form res} Let $x \in G_{\Sigma}$ be such that $\ov{\sigma}(x)$  generates the image of $I_{\ell}$. Then $\ov{\sigma}$ is isomorphic to a representation of the form $$\ov{\sigma}'=\bmat   \chi^{k-2} &*_1 & *_2 \\ & \rho & *_3 \\ && \chi^{k-1} \emat$$ and such that $$\ov{\sigma}'(x)-I_4=\bmat 0&0&0&0\\ 0&0&*&0\\0&0&0&0\\0&0&0&0\emat\neq 0,$$ i.e., the matrix conjugating $\ov{\sigma}|_{I_{\ell}}$ to $\exp(t_{\ell}N_1)$ does not change the matrix form of $\ov{\sigma}$. \end{lemma}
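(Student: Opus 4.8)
The plan is to produce the conjugating matrix explicitly, by exhibiting a basis of $\bfF^4$ adapted simultaneously to the flag cut out by the block shape of $\ov\sigma$ and to the nilpotent $N:=\ov\sigma(x)-I_4$; the point is that such a basis can be chosen compatibly with the flag, which is exactly what ``not changing the matrix form'' means.

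First I would fix notation. Let $V=\bfF^4$ carry the $G_\Sigma$-action via $\ov\sigma$, and let $0\subsetneq W_1\subsetneq W_2\subsetneq V$ be the $G_\Sigma$-stable flag read off the block shape, so $W_1\cong\chi^{k-2}$, $W_2/W_1\cong\rho$ and $V/W_2\cong\chi^{k-1}$; I use the standing convention that the basis $v_1,v_2$ of $\rho$ appearing in the block form is chosen so that $\rho|_{I_\ell}$ is upper-triangular unipotent, so $\rho(x)-I_2$ is non-zero (as $\rho(x)$ generates $\rho(I_\ell)\neq 1$) with image and kernel both $\bfF v_1$. By the semi-abelian condition \eqref{admissible}, $\ov\sigma|_{I_\ell}\cong\exp(t_pN_1)$; since $N_1$ has rank one and square zero and $N\neq 0$ (because $\ov\sigma(I_\ell)$ is non-trivial), this says precisely that $N^2=0$ and $\operatorname{rank}N=1$. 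Because $\ell\neq p$, $\chi$ is unramified at $\ell$, so $\ov\sigma(x)$ acts as the identity on $W_1$ and on $V/W_2$; hence $N(W_1)=0$, $N(V)\subseteq W_2$, and $N$ induces on $W_2/W_1$ the map $\rho(x)-I_2$.

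Next I would collect the linear algebra. As $\operatorname{rank}N=1$ while the map induced by $N$ on $W_2/W_1$ is non-zero, $\operatorname{Im}N$ cannot lie in $W_1$; hence it is a line inside $W_2$ meeting $W_1$ trivially which maps isomorphically onto $\operatorname{Im}(\rho(x)-I_2)=\bfF v_1$ in $W_2/W_1$. It follows that $N(W_2)=N(V)=\operatorname{Im}N$, that $\ker N\cap W_2=W_1\oplus\operatorname{Im}N$ has dimension two, and (since $\dim\ker N=3$) that $\ker N\not\subseteq W_2$. I would then choose: $e_1$ spanning $W_1$; $e_2$ the unique generator of $\operatorname{Im}N$ whose image in $W_2/W_1$ is $v_1$; $e_3\in W_2$ an arbitrary lift of $v_2$; and $e_4\in\ker N\setminus W_2$. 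Then $e_1,e_2,e_3$ is a basis of $W_2$ and $e_1,e_2,e_3,e_4$ a basis of $V$ refining the flag and inducing the chosen basis $v_1,v_2$ on $W_2/W_1$.

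In this basis $Ne_1=0$, $Ne_2=0$ (as $e_2\in\operatorname{Im}N$ and $N^2=0$), $Ne_4=0$, and $Ne_3=ue_2$ with $u\neq 0$: indeed $Ne_3\in\operatorname{Im}N=\bfF e_2$, and since $\operatorname{Im}N\cap W_1=0$ it is determined by its image $(\rho(x)-I_2)v_2=uv_1$ in $W_2/W_1$. Hence $\ov\sigma'(x)-I_4=N$ has $(2,3)$-entry $u\neq 0$ and zeros elsewhere, as asserted, and the change-of-basis $g$ is block upper triangular with middle block $I_2$, so $\ov\sigma'=g\ov\sigma g^{-1}$ retains the shape $\bmat\chi^{k-2}&*_1&*_2\\&\rho&*_3\\&&\chi^{k-1}\emat$ with the same diagonal blocks $\chi^{k-2},\rho,\chi^{k-1}$. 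The step I expect to require the most care is precisely this last one — keeping the middle block equal to $\rho$ itself rather than to a conjugate of it. That is what forces the normalisations above: $e_2$ must reduce to $v_1$ (possible because $\operatorname{Im}(\rho(x)-I_2)$ is \emph{exactly} the line $\bfF v_1$, which uses that the basis of $\rho$ makes $\rho|_{I_\ell}$ upper-triangular and not lower-triangular) and $e_3$ must reduce to $v_2$, while absolute irreducibility of $\rho$ shows no other freedom is available. One can alternatively verify the vanishing of the off-diagonal blocks of $\ov\sigma'(x)-I_4$ by a direct block computation, in which the identities imposed by $N^2=0$ are exactly what make the resulting linear system solvable.
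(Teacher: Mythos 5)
Your proof is correct and is in substance the same argument as the paper's: both exploit that $N=\ov{\sigma}(x)-I_4$ is a rank-one, square-zero nilpotent compatible with the flag, and produce a flag-preserving change of basis inducing the identity on the graded piece $\rho$ that isolates $\operatorname{Im}N$ in the $(2,3)$-slot. The only difference is presentational: the paper works entry-wise, using the rank-one condition on a $2\times 2$ minor to write down the conjugating matrix $\diag(A,D)$ explicitly and then invoking rank one a second time to kill the residual entries, whereas you assemble the adapted basis directly from $W_1$, $\operatorname{Im}N$ and $\ker N$ in one step.
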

\begin{proof} Without loss of generality assume that $\rho(x)=\bmat 1&\alpha \\ 0 & 1\emat $ with $\alpha \neq 0$. Write $\alpha_{i,j}$ for the $(i,j)$-entry of $\ov{\sigma}$. 
Then $$\ov{\sigma}(x)-I_4 = \bmat 0&\alpha_{1,2}(x) & \alpha_{1,3}(x) & \alpha_{1,4}(x) \\ 0&0 &\alpha&\alpha_{2,4}(x) \\ 0&0&0&\alpha_{3,4}(x) \\ 0&0&0&0\emat.$$
Since ${\rm rank}(\ov{\sigma}(x)-I_4)={\rm rank} (N_1)=1$, we must also have that $M:= \bmat \alpha_{1,3}(x) & \alpha_{1,4}(x) \\ \alpha&\alpha_{2,4}(x)\emat$ has rank 1.
Thus there exists $\beta \in \bfF$ such that $\alpha_{1,3}(x)=\beta \alpha$ and $\alpha_{1,4}(x)=\beta \alpha_{2,4}(x)$. 
Hence we have $$AMD^{-1}=\bmat 0 & 0\\ \alpha & 0 \emat, \quad \textup{where} \quad A=\bmat 1 & -\beta \\ 0 & 1 \emat, D^{-1}=\bmat 1 & -\alpha_{2,4}(x)/\alpha\\ 0&1\emat\in \GL_2(\bfF).$$ 
Thus  $$\bmat A\\ &D \emat (\ov{\sigma}(x)-I_4) \bmat A\\ &D \emat^{-1} = \bmat 0&\alpha'_{1,2}(x) & 0& 0 \\ 0&0 &\alpha&0 \\ 0&0&0&\alpha'_{3,4}(x) \\ 0&0&0&0\emat.$$ However, since the last matrix has rank one and $\alpha \neq 0$, we must have $\alpha'_{1,2}(x)=\alpha'_{3,4}(x)=0$. Furthermore note that \begin{multline}\ov{\sigma}':=\bmat A\\ &D \emat \ov{\sigma} \bmat A\\ &D \emat^{-1} \\
=\bmat A\\ &D \emat 
 \bmat \chi^{k-2}&* &*\\ &\rho &*\\ &&\chi^{k-1}\emat\bmat A\\ &D \emat^{-1} =\bmat A\\ &D \emat 
 \bmat \chi^{k-2}&*' &*'\\ &\rho &*'\\ &&\chi^{k-1}\emat\bmat A\\ &D \emat^{-1}.\end{multline} 
\end{proof}

\begin{cor}\label{splitting at ell} For $\ell \in \Sigma\setminus\{p\}$ we have $\ov{\sigma}|_{I_{\ell}} \cong 1 \oplus \rho|_{I_{\ell}} \oplus 1.$ In particular, the extensions given by the entries $a$ and $c$ of $\ov{\sigma}$ are both split when restricted to $I_{\ell}$. 
\end{cor}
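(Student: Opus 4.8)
The plan is to read Corollary~\ref{splitting at ell} off directly from Lemma~\ref{form res}. Fix $\ell \in \Sigma \setminus \{p\}$ and let $x \in G_\Sigma$ be an element with $\ov\sigma(x)$ generating the image of $I_\ell$; such an $x$ exists because for $\ell \neq p$ the image of tame inertia is procyclic and $\ov\sigma|_{I_\ell}$ factors through the tame character by the semi-abelian hypothesis \eqref{admissible}. Replacing $\ov\sigma$ by the conjugate $\ov\sigma'$ produced by Lemma~\ref{form res}, we may assume that $\ov\sigma'$ still has the shape $\bmat \chi^{k-2}&*_1&*_2\\ &\rho&*_3\\ &&\chi^{k-1}\emat$ and that $\ov\sigma'(x)-I_4 = \alpha N_1$ for some $\alpha \in \bfF^\times$, i.e. the unique nonzero entry of $\ov\sigma'(x)-I_4$ sits inside the central $\rho$-block.

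Next I would propagate this from the single generator $x$ to all of $I_\ell$. Since $N_1^2 = 0$ we have $\ov\sigma'(x)^m = I_4 + m\alpha N_1$ for all $m \in \bfZ$, and hence $\ov\sigma'(I_\ell)$, being the closure of $\langle \ov\sigma'(x)\rangle$, is contained in $\{I_4 + \beta N_1 : \beta \in \bfF\}$. Every matrix of this form is block-diagonal with respect to the coordinate partition $\{1\}\sqcup\{2,3\}\sqcup\{4\}$ corresponding to the three diagonal blocks of $\ov\sigma'$: it is the identity on the two outer one-dimensional blocks and equals $\bmat 1&\beta\\0&1\emat$ on the central block. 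Therefore $\ov\sigma'|_{I_\ell}$ decomposes as a direct sum over this partition, and since the central block of $\ov\sigma'$ restricts to a representation isomorphic to $\rho|_{I_\ell}$ (the conjugation in Lemma~\ref{form res} affects only its isomorphism class), this yields $\ov\sigma|_{I_\ell} \cong \ov\sigma'|_{I_\ell} \cong 1 \oplus \rho|_{I_\ell} \oplus 1$.

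Finally, the ``in particular'' is immediate: for every $g \in I_\ell$ we have $\ov\sigma'(g) = I_4 + \beta(g) N_1$, so the blocks $*_1, *_2, *_3$ of $\ov\sigma'$ vanish identically on $I_\ell$; the extension classes $a$ and $c$, which lie in $H^1(\bfQ, \rho(1-k))$ by Remark~\ref{identification}, therefore restrict to $0$ in $H^1(I_\ell, \rho(1-k)|_{I_\ell})$, i.e. the corresponding extensions split on restriction to $I_\ell$. I do not anticipate a genuine obstacle here: essentially all the work is already in Lemma~\ref{form res} and this corollary is a repackaging of it. The only step warranting a line of justification is the passage from the generator $x$ to all of $I_\ell$, which uses both the procyclicity of tame inertia at $\ell \neq p$ and the fact that $N_1$ squares to zero.
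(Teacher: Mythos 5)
Your proposal is correct and follows exactly the route the paper intends: the paper gives no proof of this corollary, treating it as an immediate consequence of Lemma~\ref{form res} together with the semi-abelian hypothesis \eqref{admissible}, and your write-up (procyclicity of tame inertia, $N_1^2=0$, block-diagonality of $I_4+\beta N_1$, and the observation that the conjugation in Lemma~\ref{form res} only alters $a$ and $c$ by coboundaries) is precisely the routine verification being suppressed.
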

Let $\textup{LCN}(E)$ be the category of local complete Noetherian $\Oo$-algebras with residue field $\bfF$. 
\begin{definition} 
For an object $A$ of ${\rm LCN}(E)$ we will say that $\sigma: G_{\Sigma} \to \GL_4(A)$ is \emph{minimal} if 
 $\sigma|_{I_{\ell}}\cong_{{\rm Tot}(A)} \exp (t_pN_1)$ for every prime $\ell \in \Sigma \setminus \{p\}$. Here $\rm{Tot}(A)$ denotes the total ring of fractions of $A$.
 \end{definition}
 Note that any minimal $\sigma: G_{\Sigma} \to \GL_4(A)$ is tamely ramified at all $\ell \in \Sigma \setminus\{p\}$, so $\sigma (I_{\ell})$ is a pro-cyclic group for all such $\ell$.

We consider the deformation problem for $\ov{\sigma}$ where to every object $A$ of ${\rm LCN}(E)$ we assign the set of strict equivalence classes of representations $\sigma: G_{\Sigma} \to \GL_4(A)$ which are short crystalline, have determinant $\epsilon^{4k-6}$, are minimal  and reduce to $\ov{\sigma}$ modulo the maximal ideal $\fm_A$ of $A$.

\begin{thm} \label{representability} The above deformation problem is representable. \end{thm}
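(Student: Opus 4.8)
The plan is to verify the hypotheses of Schlessinger's representability criteria (or, equivalently, the Grothendieck--Mazur framework for deformation functors of Galois representations), using that $\ov{\sigma}$ has scalar endomorphisms in the relevant sense. First I would observe that the functor $D$ in question is a subfunctor of the (framed or unframed) deformation functor of $\ov{\sigma}$ cut out by the conditions: fixed determinant $\epsilon^{4k-6}$, short crystalline at $p$, and minimality at each $\ell\in\Sigma\setminus\{p\}$. The standard approach is to check that each of these is a \emph{deformation condition} in the sense of Mazur (closed under the natural operations: passage to subrepresentations corresponding to quotient rings, to finite products, and compatible with completion), and that the unrestricted functor is itself representable, from which representability of the cut-out subfunctor follows.

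The key steps, in order, would be: (1) Show the unrestricted deformation functor of $\ov{\sigma}$ with fixed determinant is representable. Since $\ov{\sigma}$ is generally not absolutely irreducible (it is block-upper-triangular), the usual appeal to $\End_{G_\Sigma}(\ov{\sigma})=\bfF$ is not automatic; however, one notes that $\ov{\sigma}$ is \emph{indecomposable} and, more to the point, that the relevant endomorphism algebra consists only of scalars because the Jordan--H\"older factors $\chi^{k-2}$, $\rho$, $\chi^{k-1}$ are pairwise non-isomorphic (using that $\rho$ is absolutely irreducible and two-dimensional, hence $\not\cong\chi^{k-2},\chi^{k-1}$, and $\chi^{k-2}\not\cong\chi^{k-1}$ since $p-1\nmid 1$ as $p>3$), and the extension classes $a,c$ are nonzero so no further endomorphisms appear. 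This gives $\End_{G_\Sigma}(\ov{\sigma})=\bfF$, and then Schlessinger's criterion (or Mazur's theorem) yields representability of the unrestricted functor by some $R^{\mathrm{un}}\in\mathrm{LCN}(E)$. (2) Check that ``short crystalline at $p$'' is a deformation condition: this is where the Fontaine--Laffaille machinery recalled in Section~\ref{Setup} enters --- the essential image of the functor $\mathbf{G}$ is closed under subquotients and finite direct sums, and the condition ``$\sigma\otimes A'$ lies in the essential image of $\mathbf{G}$ for all Artinian quotients $A'$'' is manifestly closed under the required operations (this is the definition of a crystalline representation over a complete Noetherian $\bfZ_p$-algebra given in the text). (3) Check that fixed determinant is a (trivially) closed condition. (4) Check that minimality at $\ell\in\Sigma\setminus\{p\}$ is a deformation condition: a minimal $\sigma$ is tamely ramified with $\sigma(I_\ell)$ pro-cyclic, and the condition $\sigma|_{I_\ell}\cong_{\mathrm{Tot}(A)}\exp(t_pN_1)$ is checking that the image of a topological generator of $I_\ell$ is conjugate (over the total ring of fractions) to a fixed unipotent --- one shows this is equivalent to a collection of closed conditions on the characteristic polynomial / conjugacy type that behave well under the base-change operations. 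Then (5) conclude that $D$, being the intersection of deformation conditions inside a representable functor, is representable, pro-represented by an appropriate quotient $R$ of $R^{\mathrm{un}}$.

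The main obstacle I expect is step (1), specifically the verification that $\End_{G_\Sigma}(\ov{\sigma})=\bfF$ despite $\ov{\sigma}$ being reducible, and the careful handling of the minimality condition in step (4). For the endomorphism ring: a $G_\Sigma$-equivariant endomorphism preserves the socle/radical filtration, hence is block-upper-triangular with scalar diagonal blocks (Schur's lemma on each distinct Jordan--H\"older factor); the off-diagonal entries are then forced to vanish because $a$ and $c$ are nonzero classes --- an endomorphism with a nonzero entry in, say, the $\rho$-to-$\chi^{k-2}$ position would have to intertwine the extension $a$ trivially, which a scalar-difference argument rules out. For minimality: the subtlety is that the condition is imposed over $\mathrm{Tot}(A)$ rather than $A$, so one must argue it is still a closed condition defining a quotient of the deformation ring; here I would invoke that tame ramification at $\ell$ is controlled by the single element $\sigma(\gamma_\ell)$ for a topological generator $\gamma_\ell$, that $\ov{\sigma}(\gamma_\ell)$ is the fixed unipotent $\exp(t_pN_1)$, and that requiring $\sigma(\gamma_\ell)$ to remain unipotent of rank-one image (with the specified form, via Lemma~\ref{form res}) is cut out by vanishing of finitely many entries in a suitable conjugate, which is a closed condition stable under the operations. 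The remaining steps (2)--(3), (5) are essentially bookkeeping given the foundational material already assembled in Section~\ref{Setup}.
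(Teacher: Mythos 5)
Your proposal is correct and follows essentially the same route as the paper: the paper also verifies that $\ov{\sigma}$ has scalar centralizer and then reduces to checking that short crystallinity and minimality are deformation conditions (citing Clozel--Harris--Taylor Proposition 2.2.9, Ramakrishna for crystallinity, and Mazur's Section 29 for minimality). You simply spell out in more detail the centralizer computation and the closure properties that the paper delegates to those references.
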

\begin{proof} It is easy to check that $\ov{\sigma}$ has scalar centralizer, so  by e.g. \cite{ClozelHarrisTaylor08} Proposition 2.2.9  it is enough to verify that short crystallinity and minimality are deformation conditions. The former is due to Ramakrishna (cf. Theorem 1.1 of \cite{Ramakrishna93}). The latter can be proven  similar to \cite{Mazur97} Section 29.  \end{proof}
We will write $R'$ for the universal deformation ring
and $(\sigma')^{\rm univ}: G_{\Sigma} \to \GL_4(R')$ for the universal deformation.

Consider  the following antiinvolution $\tau: R'[G_{\Sigma}] \to R'[G_{\Sigma}]$ given by $\tau(g) = \epsilon^{2k-3}(g) g^{-1}$. Clearly $\tau$ induces the following permutation on the set $\mI$ of Jordan-Hölder factors of $\ov{\sigma}$: $$ \chi^{k-1} \mapsto \chi^{k-2}, \quad \rho \mapsto \rho, \quad \chi^{k-2} \mapsto \chi^{k-1}.$$ 
For $A \in \textup{LCN}(E)$ we will say that a short crystalline deformation $\sigma: G_{\Sigma} \to \GL_4(A)$ is $\tau$-self-dual if $\tr \sigma = \tr \sigma \circ \tau$. 
 Note that for semisimple $\sigma$ and fields $A$ this is equivalent to $\sigma^{\vee} \cong \sigma(3-2k)$.

\begin{prop} \label{reprsd} The functor assigning to an object $A \in
\textup{LCN}(E)$ the set of strict equivalence classes of 
 $\tau$-self-dual short crystalline  (at $p$), minimal
deformations to $\GL_{4}(A)$ is representable by the quotient of
$R'$ by the ideal generated by $\{\tr (\sigma')^{\rm univ}(g)-\tr (\sigma')^{\rm univ}(\tau(g)) \mid g \in G_{\Sigma} \}$. We will denote this quotient by
$R$ and will write $\sigma^{\rm univ}$ for the corresponding universal deformation. \end{prop}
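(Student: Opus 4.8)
The plan is to deduce this from Theorem \ref{representability} by the standard observation that $\tau$-self-duality is a \emph{closed} condition on the deformation functor, cut out precisely by the displayed ideal. Let $D'$ denote the (representable) deformation functor of Theorem \ref{representability}, so that $D'(A)=\Hom_{\textup{LCN}(E)}(R',A)$ and a deformation $\sigma$ of $\ov\sigma$ to $A$ corresponds to the unique $\phi_\sigma\colon R'\to A$ with $\phi_\sigma\circ(\sigma')^{\rm univ}$ strictly equivalent to $\sigma$; here I extend $\phi_\sigma$ and $(\sigma')^{\rm univ}$ linearly to the relevant group rings, and recall that $\tau$ is the $R'$-linear anti-involution of $R'[G_\Sigma]$ given by $\tau(g)=\epsilon^{2k-3}(g)g^{-1}$. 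I would then let $D\subseteq D'$ be the subfunctor with $D(A)=\{\sigma\in D'(A):\tr\sigma(g)=\tr\sigma(\tau(g))\ \forall g\in G_\Sigma\}$; this is a genuine subfunctor because $\tr$ is conjugation-invariant (so the condition descends to strict equivalence classes) and is visibly preserved under base change. Set $J:=\bigl(\tr(\sigma')^{\rm univ}(g)-\tr(\sigma')^{\rm univ}(\tau(g)):g\in G_\Sigma\bigr)\subseteq R'$ and $R:=R'/J$.

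The heart of the argument is the following chain of equivalences for $\sigma\in D'(A)$ with classifying map $\phi:=\phi_\sigma$. Since $\phi$ is an $\Oo$-algebra homomorphism and $\tau$ is defined through $\epsilon$, which takes values in $\bfZ_p^\times\subseteq\Oo^\times$, one has $\phi\bigl(\tr(\sigma')^{\rm univ}(g)\bigr)=\tr\sigma(g)$ and $\phi\bigl(\tr(\sigma')^{\rm univ}(\tau(g))\bigr)=\tr\sigma(\tau(g))$ for every $g$ (the scalar $\epsilon^{2k-3}(g)$ is carried through $\phi$ unchanged, and the entrywise image of $(\sigma')^{\rm univ}(g)^{-1}$ is a conjugate of $\sigma(g)^{-1}$, which does not affect traces). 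Hence $\sigma$ is $\tau$-self-dual iff $\phi$ annihilates every generator of $J$, iff $\phi$ factors uniquely through $R$. This produces a natural bijection $D(A)\cong\Hom_{\textup{LCN}(E)}(R,A)$, i.e.\ $R$ represents $D$; one takes $\sigma^{\rm univ}$ to be the image of $(\sigma')^{\rm univ}$ under $R'\twoheadrightarrow R$.

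It remains only to check that $R$ really is an object of $\textup{LCN}(E)$, which amounts to $J\subseteq\fm_{R'}$ (the ring is automatically complete and Noetherian as a quotient of such). For this I would verify $\tr\ov\sigma=\tr\ov\sigma\circ\tau$ directly: reducing $\tau$ modulo $\fm_{R'}$ replaces $\epsilon$ by $\chi$, and then $\chi^{k-2}\circ\tau=\chi^{2k-3}\cdot\chi^{-(k-2)}=\chi^{k-1}$, $\chi^{k-1}\circ\tau=\chi^{k-2}$, while $\tr(\rho\circ\tau)(g)=\chi^{2k-3}(g)\,\tr\rho(g^{-1})=\tr\rho(g)$ since $\rho^\vee\cong\rho(3-2k)$ (because $\det\rho=\chi^{2k-3}$); summing over the three Jordan-Hölder factors gives $\tr\ov\sigma=\tr\ov\sigma\circ\tau$, equivalently $\ov\sigma\in D(\bfF)$, so the structure map $R'\to\bfF$ kills $J$. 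Short crystallinity and minimality need no further comment: they are already imposed in $D'$ and inherited by the subfunctor $D$. I do not expect a genuine obstacle here — the argument is purely formal — and the only points requiring a little care are the compatibility of $\phi_\sigma$ with the twist $\tau$ (immediate, as $\epsilon$ is $\bfZ_p^\times$-valued) and the trace identity that guarantees $J\subseteq\fm_{R'}$.
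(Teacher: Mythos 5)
Your argument is correct; the paper states Proposition \ref{reprsd} without proof, and what you give is the standard formal argument the authors clearly intend: a closed condition on a representable functor is represented by the quotient of $R'$ by the ideal of the corresponding universal elements, once one knows the condition defines a subfunctor (trace is conjugation-invariant and compatible with base change) and that the ideal lies in $\fm_{R'}$. Your verification of the latter, using $\tr\rho(g^{-1})=\chi^{-(2k-3)}(g)\tr\rho(g)$ from $\det\rho=\chi^{2k-3}$ together with the swap $\chi^{k-2}\leftrightarrow\chi^{k-1}$, is exactly the one non-formal check needed, so nothing is missing.
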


We write $R^{\rm red}$ for the quotient of $R$ by its nilradical and $\sigma^{\rm red}$ for the corresponding (universal)
deformation, i.e., the composite of $\sigma^{\rm univ}$ with $R\twoheadrightarrow R^{\rm red}$.

\section{Lattice} \label{Lattice}

\begin{thm} \label{centralizer} Let $E$ be a finite extension of $\bfQ_p$ with valuation ring $\Oo$, uniformizer $\varpi$ and $\bfF=\Oo/\varpi\Oo$. Let $\sigma: G_{\Sigma} \to \GL_n(E)$ be an absolutely irreducible Galois representation. Suppose there exists a $G_{\Sigma}$-stable $\Oo$-lattice $\Lambda$ in the space of $\sigma$ such that $$\ov{\sigma}_{\Lambda} \cong \bmat \rho_1 & * \\ 0 & \rho_2\emat$$ with $\rho_i: G_{\Sigma} \to \GL_{n_i}(\bfF)$ (for $i=1,2$) having scalar centralizer,  $\rho_1$ semisimple  and such that none of the irreducible constituents of $\rho_1^{\rm ss}$ is isomorphic to any of the irreducible constituents of $\rho_2^{\rm ss}$. Then there exists a $G_{\Sigma}$-stable lattice $\Lambda'$ in the space of $\sigma$ such that $$\ov{\sigma}_{\Lambda'} \cong \bmat \rho_1 & * \\ 0 & \rho_2\emat \not\cong \rho_1 \oplus \rho_2.$$  \end{thm}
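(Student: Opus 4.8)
The plan is to follow Ribet's lattice-twisting argument (as in \cite{Ribet76}, Proposition 2.1), but generalized to the higher-dimensional, ``two-block'' setting where $\rho_1$ is semisimple rather than irreducible. Fix any $G_\Sigma$-stable lattice $\Lambda$ realizing the stated block-upper-triangular reduction. If the extension class $*$ giving $\ov\sigma_\Lambda$ is already non-split, we are done. So assume $\ov\sigma_\Lambda \cong \rho_1 \oplus \rho_2$, and we must produce a different lattice whose reduction is a non-split extension of $\rho_2$ by $\rho_1$ (note the direction: we want the sub to be $\rho_1$ and the quotient $\rho_2$, which is why we will twist ``downward'').

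\medskip
\noindent\textbf{Main construction.} Write $\Lambda = \Lambda_1 \oplus \Lambda_2$ as $\Oo$-modules, compatibly with the assumed splitting of $\ov\sigma_\Lambda$, with $\Lambda_i$ of rank $n_i$ and $\Lambda_i/\varpi\Lambda_i$ affording $\rho_i$. Consider the chain of sublattices $\Lambda^{(m)} := \varpi^m \Lambda_1 \oplus \Lambda_2$ for $m \ge 0$; each is $G_\Sigma$-stable only if the splitting persists, which by irreducibility of $\sigma$ it cannot do forever. More precisely: the standard argument is to look at $L := \varpi\Lambda_1 \oplus \Lambda_2 \subseteq \Lambda$. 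This is $G_\Sigma$-stable, and $\Lambda/L \cong \Lambda_1/\varpi\Lambda_1$ affords $\rho_1$, while $L/\varpi\Lambda \cong \Lambda_2/\varpi\Lambda_2$ affords $\rho_2$; thus $\ov\sigma_L$ is an extension of $\rho_1$ by $\rho_2$. If that is non-split we still have the ``wrong'' direction, so instead one iterates in the correct direction: replace $\Lambda$ by the lattice $\Lambda' := \Lambda_1 \oplus \varpi\Lambda_2$ inside $\varpi^{-1}\Lambda$ — equivalently rescale — and consider the descending chain $\Lambda \supseteq \Lambda' \supseteq \varpi\Lambda \supseteq \cdots$. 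The reductions $\ov\sigma_{\Lambda'}$ interpolate between $\rho_1\oplus\rho_2$ and $\rho_2\oplus\rho_1$; since $\sigma$ is irreducible, not all of these reductions can be split, and one extracts from the first place the splitting fails a lattice $\Lambda'$ with $\ov\sigma_{\Lambda'}$ a non-split extension having $\rho_1$ as a subrepresentation. Concretely I would argue: if \emph{every} lattice in the chain gave a split reduction, then $\sigma$ would stabilize a decreasing chain of split summands, forcing a $G_\Sigma$-stable complement in $V$ and contradicting absolute irreducibility.

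\medskip
\noindent\textbf{Pinning down the Jordan--Hölder structure.} The disjointness hypothesis — no constituent of $\rho_1^{\rm ss}$ appears in $\rho_2^{\rm ss}$ — is what guarantees that the ``sub'' and ``quotient'' labels are unambiguous: any $G_\Sigma$-stable subspace of $\ov\sigma_{\Lambda'}$ whose semisimplification is a sum of constituents of $\rho_1$ must be contained in the $\rho_1$-part, and vice versa. This, together with the scalar-centralizer hypotheses on $\rho_1$ and $\rho_2$, ensures that the reduction of the lattice we produce genuinely has the shape $\begin{bmatrix}\rho_1 & *\\ 0 & \rho_2\end{bmatrix}$ (upper-triangular with $\rho_1$ on top) rather than some mixed-up filtration — one uses here that $\rho_1$ being semisimple means the whole $\rho_1$-isotypic piece splits off as a subrepresentation internally, so the only possible ``new'' extension data sits in $\Hom$ between the two blocks. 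I would make this precise by a short lemma: if $W$ is a finite-length $\bfF[G_\Sigma]$-module with $W^{\rm ss} = \rho_1^{\rm ss} \oplus \rho_2^{\rm ss}$, the $\rho_i$ disjoint, $\rho_1$ semisimple, then $W$ has a unique subrepresentation with semisimplification $\rho_1^{\rm ss}$ (the $\rho_1$-isotypic part), and it is $\cong\rho_1$; the quotient is $\cong\rho_2$.

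\medskip
\noindent\textbf{Expected main obstacle.} The genuinely delicate point, compared to the classical irreducible case, is controlling \emph{which} direction the non-split extension goes and ruling out the degenerate possibility that along the chain the reduction jumps directly from $\rho_1\oplus\rho_2$ to a non-split extension of $\rho_1$ by $\rho_2$ (wrong direction) and back, never giving the one we want. Handling this requires analyzing the chain of lattices more carefully — essentially a ``staircase'' argument tracking, at each step, the $\bmod\,\varpi$ image of the previous lattice inside the next — and invoking irreducibility at the right moment. The semisimplicity of $\rho_1$ is used crucially here to break $\rho_1$ into lines/blocks and push the lattice down one constituent at a time, so that the first failure of splitting is forced to exhibit $\rho_1$ (or at worst a constituent of it, after which one iterates on the remaining constituents using disjointness) as a genuine subrepresentation of the new reduction. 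The bookkeeping to assemble these constituent-by-constituent twists into a single lattice with the full $\rho_1$ as sub, while preserving $G_\Sigma$-stability, is where I expect to spend the most effort.
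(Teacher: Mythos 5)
Your overall strategy --- Ribet's lattice-twisting iteration, with absolute irreducibility of $\sigma$ supplying the contradiction if every reduction splits --- is the same as the paper's. But the proposal omits the one genuinely new ingredient that makes the iteration work in the two-block setting, and this is exactly where the hypotheses on $\rho_1,\rho_2$ enter. The paper first proves a normalization lemma: if $\bmat \rho_1 & f \\ 0 & \rho_2 \emat \cong \rho_1\oplus\rho_2$, then the intertwiner can be taken to be \emph{unipotent block-upper-triangular}, $\bmat I_{n_1} & B \\ 0 & I_{n_2}\emat$. Its proof uses the disjointness of the constituents to kill the lower-left block of the intertwiner (there are no nonzero $G_{\Sigma}$-maps $\rho_1\to\rho_2$) and the scalar-centralizer hypotheses to make the diagonal blocks scalars, which are then normalized to $1$. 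This is what guarantees that, after lifting $B$ to $\Oo$ and conjugating $\sigma_{\Lambda}$, \emph{both} off-diagonal blocks become divisible by $\varpi$ while the diagonal blocks still reduce to $\rho_1,\rho_2$; only then does conjugating by $\diag(I_{n_1},\varpi I_{n_2})$ yield a new stable lattice whose reduction is again of the required shape with the lower-left divisibility strictly increased, so the process can be repeated. In your write-up the centralizer and disjointness hypotheses are instead spent on a ``unique subrepresentation'' lemma which, as stated, is false (a non-split extension $0\to\rho_2\to W\to\rho_1\to0$ has no subrepresentation isomorphic to $\rho_1$ at all), and the normalization step is absent.

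Relatedly, the concrete chain you propose, $\Lambda^{(m)}=\varpi^m\Lambda_1\oplus\Lambda_2$ for a \emph{fixed} $\Oo$-module decomposition $\Lambda=\Lambda_1\oplus\Lambda_2$, already fails at $m=2$: writing $\sigma_{\Lambda}=\bmat \alpha & \beta \\ \varpi\gamma & \delta \emat$ in the adapted basis, $\Lambda^{(m)}$ is $G_{\Sigma}$-stable only if $\beta$ is divisible by $\varpi^{m-1}$, which there is no reason to expect. The decomposition must be re-chosen at every step via the unipotent splitting above (this is the ``staircase'' you anticipate but do not carry out), and the terminal contradiction is not that the chain ``stabilizes a complement'' but that, in a convergent sequence of bases, the lower-left block of $\sigma$ is divisible by arbitrarily high powers of $\varpi$, hence zero, making $\sigma$ reducible. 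In short: right strategy and correct identification of where the difficulty lies, but the key lemma and the correct iteration are precisely the parts left unproved.
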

Let us note that if $\rho_2$ is semisimple, then Theorem \ref{centralizer} follows from a generalization of Theorem in \cite{Urban99} - cf. Remark (a) in \cite{Urban99}.  

 We begin the proof of Theorem \ref{centralizer} with the following lemma.
\begin{lemma} \label{aux1} Let $\rho_1, \rho_2$ be as in Theorem \ref{centralizer}. Assume that $\ov{\sigma}: G_{\Sigma} \to \GL_n(\bfF)$ given by $$\ov{\sigma} = \bmat \rho_1 & f \\ & \rho_2\emat$$ is isomorphic to $\rho_1 \oplus \rho_2$. Then there exists a matrix $$M=\bmat I_{n_1} & B \\ & I_{n_2} \emat \in \GL_n(\bfF)$$ such that $M\ov{\sigma} M^{-1} = \bmat \rho_1 \\ & \rho_2\emat.$ \end{lemma}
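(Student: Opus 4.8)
\emph{Proof plan.} The plan is to exploit a given abstract isomorphism $\ov{\sigma}\cong\rho_1\oplus\rho_2$ to produce an intertwiner and then normalize it into the desired block-unipotent shape by Schur-type arguments. First I would fix $P\in\GL_n(\bfF)$ with $P\ov{\sigma}(g)P^{-1}=(\rho_1\oplus\rho_2)(g)$ for all $g\in G_\Sigma$, write $P=\bmat P_{11}&P_{12}\\ P_{21}&P_{22}\emat$ in block form matching the decomposition $n=n_1+n_2$, and expand the relation $P\ov{\sigma}(g)=(\rho_1\oplus\rho_2)(g)P$ block by block, where $\ov{\sigma}(g)=\bmat\rho_1(g)&f(g)\\ 0&\rho_2(g)\emat$.

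The $(2,1)$ block of that identity reads $P_{21}\rho_1(g)=\rho_2(g)P_{21}$ for all $g$, so $P_{21}$ is a $G_\Sigma$-homomorphism $\rho_1\to\rho_2$; the image of a nonzero such map would be a common subquotient of $\rho_1$ and $\rho_2$, hence would contribute an irreducible constituent shared by $\rho_1^{\rm ss}$ and $\rho_2^{\rm ss}$, contrary to hypothesis. Therefore $P_{21}=0$, whence $P$ is block upper-triangular and $P_{11},P_{22}$ are invertible. The $(1,1)$ and $(2,2)$ blocks then give $P_{11}\in\End_{G_\Sigma}(\rho_1)$ and $P_{22}\in\End_{G_\Sigma}(\rho_2)$, so by the scalar-centralizer hypotheses $P_{11}=c\,I_{n_1}$ and $P_{22}=d\,I_{n_2}$ for some $c,d\in\bfF^\times$.

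Finally, the $(1,2)$ block reads $P_{11}f(g)+P_{12}\rho_2(g)=\rho_1(g)P_{12}$, i.e.\ $f(g)=\rho_1(g)(c^{-1}P_{12})-(c^{-1}P_{12})\rho_2(g)$. Setting $B:=c^{-1}P_{12}$ and $M:=\bmat I_{n_1}&B\\ &I_{n_2}\emat$, a direct computation gives $M\ov{\sigma}(g)M^{-1}=\bmat\rho_1(g)& -\rho_1(g)B+f(g)+B\rho_2(g)\\ &\rho_2(g)\emat=(\rho_1\oplus\rho_2)(g)$, which is exactly the claim. I do not expect a genuine obstacle here: the only real inputs are Schur's lemma (encoded in the scalar-centralizer condition) and the absence of common Jordan--H\"older factors, both of which are assumed; everything else is the block bookkeeping above. (In fact this particular lemma does not use semisimplicity of $\rho_1$.)
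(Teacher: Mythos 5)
Your proposal is correct and follows essentially the same route as the paper: fix an intertwiner, kill the lower-left block via the absence of common Jordan--H\"older constituents, use the scalar-centralizer hypotheses to make the diagonal blocks scalar, and read off the coboundary relation for $f$ that makes the unipotent matrix $M$ work. The only (cosmetic) difference is that you extract the identity $f=\rho_1 B-B\rho_2$ explicitly and verify $M\ov{\sigma}M^{-1}$ directly, whereas the paper normalizes the scalars $\alpha,\delta$ inside the original conjugating matrix; your parenthetical that semisimplicity of $\rho_1$ is not needed here is also accurate.
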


\begin{proof} By assumption there exists $h=\bmat A & B \\ C & D \emat \in \GL_n(\bfF)$ (with correct dimensions) such that \be \label{eq343} h\bmat \rho_1 & f \\ & \rho_2 \emat h^{-1} = \bmat \rho_1 & 0\\ & \rho_2 \emat\ee  which implies that $\rho_2 C = C \rho_1$, i.e., that $C \in \Hom_{G_{\Sigma}}(\rho_1 , \rho_2)$. The image of $C$ is a quotient of $\rho_1$ which is a submodule of $\rho_2$. However,  by our assumption on the irreducible constituents of $\rho_1^{\rm ss}$ and $\rho_2^{\rm ss}$, this image must be zero, so $C=0$.  Then (\ref{eq343}) implies that $A$ centralizes $\rho_1$ and $D$ centralizes $\rho_2$, hence $A=\alpha I_{n_1}$ and $D=\delta I_{n_2}$ with $\alpha, \delta \in \bfF^{\times}$. By simply scaling $h$ by $\alpha^{-1}$ we may assume without loss of generality that $\alpha=1$. Finally (with $C=0$), the $B$-entry of $h\bmat \rho_1 & f \\ & \rho_2 \emat h^{-1} $ equals \be \label{eq4445} -\alpha\rho_1 \alpha^{-1}B\delta^{-1}+\alpha f\delta^{-1}+ B \rho_2 \delta^{-1}= \rho_1 B \delta^{-1} + f \delta^{-1} + B \rho_2 \delta^{-1}=0.\ee From this we see that the expression in (\ref{eq4445}) equals zero regardless of the value of $\delta$, so we may assume that $\delta=1$. 
\end{proof}

\begin{proof}[Proof of Theorem \ref{centralizer}] In this we follow Ribet \cite{Ribet76}, the proof of
 Proposition 2.1. Let the notation  be as in the statement of the Theorem. Suppose no $\Lambda'$ exists. Then by Lemma \ref{aux1} there exists $h=\bmat I_{n_1} & B \\ & I_{n_2}\emat$ (where $B$ here is any lift to $M_{n_1 \times n_2}(\Oo)$ of $B$ in lemma) such that for all $g \in G_{\Sigma}$ one has $$h\sigma_{\Lambda}(g)h^{-1} = \bmat \alpha & \varpi \beta \\ \varpi \gamma & \delta \emat  \quad \textup{with $\alpha \equiv \rho_1(g)$ mod $\varpi$, $\delta \equiv \rho_2(g)$ mod $\varpi$}$$ and $\alpha, \beta, \gamma, \delta$ 
 with entries  in $\Oo$. Then $$\tau_1(g):= \bmat I_{n_1} \\ & \varpi I_{n_2}\emat h\sigma_{\Lambda}(g)h^{-1}\bmat I_{n_1} \\ & \varpi I_{n_2}\emat^{-1} = \bmat \alpha & \beta \\ \varpi^2 \gamma & \delta \emat.$$ The reduction of $\tau_1$ still satisfies the assumptions of Lemma \ref{aux1}, hence by that lemma there exists $h_1=\bmat I_{n_1} & B_1 \\ & I_{n_2}\emat\in \GL_n(\Oo)$ such that $h_1 \tau_1 h_1^{-1} = \bmat \alpha_1 & \varpi \beta' \\ \varpi^2 \gamma' & \delta'\emat $, where $\alpha', \beta', \gamma'=\gamma, \delta$ 
 have entries  in $\Oo$. We continue this way as Ribet does to conclude in the end that $\sigma$ itself is reducible, which leads to a contradiction. This finishes the proof of Theorem \ref{centralizer}. 
\end{proof}

\begin{cor}\label{latt}  Let   $\sigma: G_{\Sigma} \to \GL_4(E)$ be an  absolutely irreducible short crystalline  Galois representation. Suppose  that $\ov{\sigma}^{\rm ss} = \chi^{k-2} \oplus \rho \oplus \chi^{k-1}$ with $\rho$ absolutely irreducible and  that $\sigma|_{I_{\ell}} \cong_E \exp(t_p N_1)$ for all $\ell \in \Sigma \setminus \{p\}$.  
 Then there exists  a $G_{\Sigma}$-stable lattice $\Lambda$ in the space of $\sigma$ such that $$\ov{\sigma}_{\Lambda} = \bmat \chi^{k-2} &*_1 & *_2 \\ & \rho & *_3 \\ && \chi^{k-1} \emat$$ is semi-abelian with $*_3$ a non-split extension of $\chi^{k-1}$ by $\rho$ 
and $*_1$ a non-trivial extension of $\rho$ by $\chi^{k-2}$. \end{cor}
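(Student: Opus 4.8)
The plan is to apply Theorem \ref{centralizer} twice, peeling off the Jordan--Hölder factors one at a time, and then to use the semi-abelian hypothesis together with Lemma \ref{form res} to put the resulting lattice into the stated block-upper-triangular shape. First I would take the filtration order on $\ov\sigma^{\rm ss}$ to be $\chi^{k-2}$, $\rho$, $\chi^{k-1}$. By absolute irreducibility of $\sigma$ and the Brauer--Nesbitt theorem, any $G_\Sigma$-stable lattice has semisimplification $\chi^{k-2}\oplus\rho\oplus\chi^{k-1}$; moreover, since $\rho$ is absolutely irreducible and $\chi^{k-2}\not\cong\chi^{k-1}$ (here one uses $p>2k-2$, so $\chi^{k-2}$ and $\chi^{k-1}$ are distinct characters and neither is isomorphic to $\rho$), the three factors are pairwise non-isomorphic. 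So there is a $G_\Sigma$-stable lattice whose reduction has $\chi^{k-2}$ as a subrepresentation, i.e.\ is of the form $\bsmat \chi^{k-2} & * \\ 0 & \rho' \esmat$ where $\rho'$ has semisimplification $\rho\oplus\chi^{k-1}$; actually it is cleaner to first produce a lattice with $\chi^{k-2}$ as quotient or as sub so that the top-left factor is isolated.

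The key steps, in order: (1) Use Theorem \ref{centralizer} with $\rho_1=\chi^{k-2}$ (one-dimensional, hence scalar centralizer, semisimple) and $\rho_2$ the two-step representation with constituents $\rho,\chi^{k-1}$ — we need $\rho_2$ to have scalar centralizer, which holds because its two Jordan--Hölder factors $\rho$ and $\chi^{k-1}$ are non-isomorphic, so any endomorphism is upper-triangular scalar on each block, and a short argument (or the non-splitness once arranged) pins it down; but to invoke Theorem \ref{centralizer} we only need the centralizers of $\rho_1$ and $\rho_2$ to be scalar, and disjointness of constituents, all of which hold. This yields a lattice $\Lambda_1$ with $\ov\sigma_{\Lambda_1}=\bsmat \chi^{k-2} & * \\ 0 & \rho_2\esmat$ genuinely non-split, i.e.\ the extension class $*_1\colon \rho\to \chi^{k-2}$ (the relevant component) is non-trivial. (2) Now work inside the space of $\sigma$ restricted appropriately, or more precisely apply Theorem \ref{centralizer} a second time to the sub- / quotient-lattice picking out $\chi^{k-1}$: take $\rho_1$ to be the two-step piece with constituents $\chi^{k-2},\rho$ and $\rho_2=\chi^{k-1}$ — wait, here $\rho_1$ is not semisimple, so I cannot apply the theorem in that orientation. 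Instead I dualize: apply Theorem \ref{centralizer} to the contragredient $\sigma^\vee$ (twisted suitably), for which the roles of sub and quotient are reversed, so that the one-dimensional $\chi^{k-1}$ becomes the isolated semisimple piece $\rho_1$. Equivalently, run Ribet's lattice argument directly with the filtration having $\chi^{k-1}$ on the bottom. This produces a lattice $\Lambda$ with $\ov\sigma_\Lambda=\bsmat \chi^{k-2} & *_1 & *_2 \\ 0 & \rho & *_3 \\ 0 & 0 & \chi^{k-1}\esmat$ with $*_3$ non-split (extension of $\chi^{k-1}$ by $\rho$) and $*_1$ non-split. The care needed is to arrange \emph{both} non-splittings simultaneously on a single lattice; I would do this by applying Theorem \ref{centralizer} once to get $*_1$ non-split, and then re-examine the constituent filtration: among lattices refining $\Lambda_1$ the top-left $2\times 2$ block $\bsmat \chi^{k-2} & *_1 \\ 0 & \rho\esmat$ is already fixed and non-split, and a further application of the theorem (to the quotient by the $\chi^{k-2}$-line, or to the dual) makes the bottom extension $*_3$ non-split without disturbing $*_1$, since changing the lattice by homothety on the $\chi^{k-1}$-coordinate only affects $*_2$ and $*_3$.

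The last step (3) is to impose the semi-abelian shape. A priori the lattice $\Lambda$ produced above gives a block-upper-triangular $\ov\sigma_\Lambda$ in the order $\chi^{k-2},\rho,\chi^{k-1}$, but we must check $\ov\sigma_\Lambda|_{I_\ell}\cong\exp(t_\ell N_1)$ for $\ell\in\Sigma\setminus\{p\}$. Since $\sigma|_{I_\ell}\cong_E\exp(t_p N_1)$ by hypothesis, its reduction is unipotent with a single Jordan block structure supported on the $\rho$-part, and by Corollary \ref{splitting at ell} the restriction to $I_\ell$ splits as $1\oplus\rho|_{I_\ell}\oplus 1$; Lemma \ref{form res} then shows the conjugating matrix bringing $\ov\sigma_\Lambda|_{I_\ell}$ to $\exp(t_\ell N_1)$ can be chosen block-upper-triangular (with identity diagonal blocks), hence does not disturb the matrix form, so $\ov\sigma_\Lambda$ is semi-abelian.

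I expect the main obstacle to be step (2): arranging the two non-split extensions on the \emph{same} lattice. Theorem \ref{centralizer} is stated for a two-block filtration, and naively applying it for the top block and then for the bottom block could ruin the first non-splitting when one re-homothesizes. The fix is to observe that the homothety used to adjust the $\chi^{k-1}$-coordinate acts trivially on the sublattice spanned by the $\chi^{k-2}$- and $\rho$-coordinates modulo $\varpi$, so the class $*_1\in H^1(\bfQ,\rho(1-k))$ is unchanged; dually for $*_3$. One should also double-check that after both adjustments the ambient representation is still $\sigma$ (only the lattice, hence the integral model, changes), and that the resulting $\ov\sigma_\Lambda$ indeed has $\rho$ — not $\rho^{\rm ss}$ in some other order — in the middle, which is forced by the filtration choice. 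A secondary point requiring attention is verifying the scalar-centralizer hypothesis of Theorem \ref{centralizer} for the non-irreducible block $\rho_2$ (resp.\ $\rho_1$ after dualizing): this follows because its two constituents are non-isomorphic and the extension is (once arranged) non-split, so $\End_{G_\Sigma}$ is scalar; before the non-splitness is arranged one argues with the constituents directly as in the hypotheses of the theorem, which only asks for scalar centralizer of the \emph{pieces} $\rho_1,\rho_2$ and disjointness of constituents.
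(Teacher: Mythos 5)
There is a genuine gap, concentrated in your steps (1) and (2). First, the scalar-centralizer hypothesis of Theorem \ref{centralizer} for $\rho_2=\bmat \rho & *_3\\ & \chi^{k-1}\emat$ genuinely requires $*_3$ to be non-split: if $*_3=0$ then the centralizer of $\rho_2$ contains all $\diag(\lambda I_2,\mu)$ and is not scalar. Your justification ("any endomorphism is upper-triangular scalar on each block... all of which hold") is circular — you must \emph{first} produce a lattice on which $*_3$ is non-split before Theorem \ref{centralizer} is applicable in your orientation, and nothing in your argument does this. The paper gets this input from a three-factor Ribet-type result (Theorem 6.1 of Brown) producing a non-semisimple reduction with vanishing $(1,2)$-block, and then rules out $*_3=0$ by combining the rank-one condition on $\ov{\sigma}_{\Lambda'}(x)-I_4$ at $\ell\mid N$ (forcing the putative extension of $\chi^{k-1}$ by $\chi^{k-2}$ to be unramified away from $p$) with $H^1_f(\bfQ,\bfF(-1))=0$ (Proposition \ref{class groups}). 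Second, even once Theorem \ref{centralizer} applies, it only yields that the \emph{total} extension $\bmat \chi^{k-2} & (*_1,*_2)\\ 0&\rho_2\emat$ is non-split; the non-splitness could a priori be carried entirely by the $*_2$-component (an extension of $\chi^{k-1}$ by $\chi^{k-2}$), with $*_1$ split. You assert without argument that "$*_1$ is non-trivial"; the paper spends the second half of its proof on exactly this point, via an explicit coboundary/conjugation computation, a block swap, and a second appeal to $H^1_f(\bfQ,\bfF(-1))=0$. Your proposal never invokes Proposition \ref{class groups} at all, yet it is indispensable twice.

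Your step (2) "apply the theorem twice and check the two non-splittings coexist" is also not sound as written: Theorem \ref{centralizer} is an existence statement proved by contradiction (Ribet's infinite iteration), so its two outputs in the two orientations are a priori realized on \emph{different} lattices, not related by a single homothety on the $\chi^{k-1}$-coordinate as your "fix" assumes. The paper avoids this entirely by arranging $*_3$ non-split first and then applying Theorem \ref{centralizer} exactly once, with $\rho_2$ the full lower $3\times 3$ block. Your step (3) on the semi-abelian shape is essentially correct and matches the paper's use of minimality.
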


\begin{proof} 
Applying Theorem 6.1 in \cite{Brown11} with $\mR = \Oo$ and $I=\varpi \Oo$ (note that $\mT\cong \Oo$ because any lattice in $E$ is isomorphic to $\Oo$) we get that there exists a lattice $\Lambda'$ in the space of $\sigma$ such that $$\ov{\sigma}_{\Lambda'} = \bmat \chi^{k-2} & 0 & *_2 \\ & \rho & *_3 \\ && \chi^{k-1} \emat \not\cong \chi^{k-2} \oplus \rho \oplus \chi^{k-1}.$$ We now claim that  $\ov{\sigma}_{\Lambda'}$ cannot be equivalent to a representation of the same shape where $*_3=0$.
Indeed, suppose it were, then $\ov{\sigma}_{\Lambda'} \cong \rho \oplus \bmat \chi^{k-2} & *_2 \\ & \chi^{k-1} \emat$. Let $\ell \in \Sigma \setminus \{p\}$. If $x \in G_{\Sigma}$ is such that $\sigma(x)$ generates the image of $I_{\ell}$, then the rank of $\sigma(x)-I_4$ must be one. Since $\sigma_{\Lambda'}\cong_E \sigma$, we also must have ${\rm rank}(\sigma_{\Lambda'}(x)-I_4)=1$, and so also ${\rm rank}(\ov{\sigma}_{\Lambda'}(x)-I_4)=1$. Let us prove this last implication. Since $\rho|_{I_{\ell}}\neq 1$, we must have ${\rm rank}(\ov{\sigma}_{\Lambda'}(x)-I_4)\geq 1$. So, we need to prove a rank one matrix cannot reduce mod $\varpi$ to a matrix of a higher rank. Let $A$ be a rank one matrix with entries in $\Oo$. Then every row is a scalar multiple of the first non-zero row. These scalars are of the form $u \varpi^n$, where $n \in \bfZ$. Pick a row for which $n$ is minimal and by making this row first (permutation matrix has entries in $\Oo$), we may now assume that all $n \geq 0$, i.e, that all the scalars are in $\Oo$. Thus every row of the reduction of $A$ mod $\varpi$ is a scalar multiple of the first row. This establishes that ${\rm rank}(\ov{\sigma}_{\Lambda'}(x)-I_4)\leq 1$.

However, if $\ov{\sigma}_{\Lambda'} \cong \bmat  \rho  \\ &\chi^{k-2} & *_2 \\ && \chi^{k-1} \emat$, then this rank condition forces $*_2$ to be unramified at $\ell$. Thus in this case, $\sigma_{\Lambda'}$ has a direct summand isomorphic to $\bmat \chi^{k-2} & * \\ & \chi^{k-1}\emat $ with $*$ unramified away from $p$. This direct summand is short crystalline at $p$ since $\sigma_{\Lambda'}$ is, and so gives rise to an element in $H^1_f(\bfQ, \bfF(-1))$. Since this group is trivial (by Proposition \ref{class groups}), this implies that  $\ov{\sigma}_{\Lambda'}$ is semisimple, contradicting our assumption. 

So we must have that $*_3$ gives a non-trivial extension of $\chi^{k-1}$ by $\rho$. Now, apply Theorem \ref{centralizer} with $\rho_1=\chi^{k-2}$ and $\rho_2=\bmat \rho & *_3 \\ & \chi^{k-1}\emat$ (note that $\rho_2$ has scalar centralizer). This gives us   \be \label{eq19} \ov{\sigma}_{\Lambda} = \bmat \chi^{k-2} &*_1 & *_2 \\ & \rho & *_3 \\ && \chi^{k-1} \emat\not\cong \chi^{k-2} \oplus \bmat \rho & *_3 \\ & \chi^{k-1}\emat\ee with $*_3$ a non-split extension of $\chi^{k-1}$ by $\rho$. It remains to show that $*_1$ gives rise to a non-trivial extension of $\rho$ by $\chi^{k-2}$. Suppose that one has $\bmat \chi^{k-2} & *_1 \\ & \rho \emat \cong \chi^{k-2} \oplus \rho$. Then $*_1$ corresponds to a coboundary in $H^1(\bfQ, \Hom(\rho, \chi^{k-2}))$. More precisely, there exists a matrix $f \in M_{n_1 \times n_2}(\bfF)$ (i.e., a map in $\Hom_{\bfF}(\rho, \chi^{k-2})$) such that $$\bmat \chi^{k-2} & *_1 \\ & \rho\emat \cong \bmat \chi^{k-2} & \chi^{k-2} f - f \rho\\ 0 & \rho\emat.$$ However, note that $$\bmat I_{n_1} & f \\ & I_{n_2} \\ && I_{n_3} \emat \bmat \chi^{k-2} & a & b \\ & \rho & c \\ && \chi^{k-1} \emat \bmat I_{n_1} & f \\ & I_{n_2} \\ && I_{n_3} \emat^{-1} = \bmat \chi^{k-2} & 0 & b+fc \\ & \rho & c \\ && \chi^{k-1} \emat.$$ So, it remains to show that $$ \bmat \chi^{k-2} & 0 &b \\ & \rho & c \\ && \chi^{k-1} \emat \cong \chi^{k-2} \oplus \bmat \rho & * \\ & \chi^{k-1}\emat.$$ Note that we have $$\bmat 0_{n_2\times n_1} & I_{n_2} \\ I_{n_1} & 0_{n_1\times n_2} \\ && I_{n_3}\emat  \bmat \chi^{k-2} & 0 &b \\ & \rho & c \\ && \chi^{k-1} \emat \bmat 0_{n_2\times n_1} & I_{n_2} \\ I_{n_1} & 0_{n_1\times n_2} \\ && I_{n_3}\emat^{-1} = \bmat \rho && c \\ & \chi^{k-2} & b \\ && \chi^{k-1} \emat.$$  By the same argument as before $b \in H^1(G_{\Sigma}, \Hom_{\bfF}(\chi^{k-1}, \chi^{k-2}))$ must again be a coboundary, i.e., $b(g) = \chi^{k-2}(g) f -f \rho(g)$ for some matrix $f$. But then again the matrix $\bmat I_{n_1} \\ & I_{n_2} & f \\ &&I_{n_3}\emat$ conjugates $\bmat \rho && c \\ & \chi^{k-2} & b \\ && \chi^{k-1} \emat$ to $\bmat \rho && c \\ & \chi^{k-2} & 0 \\ && \chi^{k-1} \emat \cong \chi^{k-2} \oplus \bmat \rho &*_3 \\ & \chi^{k-1} \emat$, which leads to a contradiction to \eqref{eq19}.
The fact that $\ov{\sigma}_{\Lambda}$ is semi-abelian follows again from the minimality of $\sigma$.
\end{proof} 

\begin{cor} \label{corlat} Let $\sigma, \tau: G_{\Sigma} \to \GL_4(E)$ be two absolutely irreducible short crystalline at $p$  Galois representations such that their restriction to ${I_{\ell}}$  is isomorphic to $\exp(t_p N_1)$ for all $\ell \in \Sigma \setminus \{p\}$.      Suppose that $\ov{\sigma}^{\rm ss} =\ov{\tau}^{\rm ss}= \chi^{k-2} \oplus \rho \oplus \chi^{k-1}$ with $\rho$ absolutely irreducible. Suppose that $\dim_{\bfF}H^1_{f}(\bfQ, \rho(1-k))\leq 1$. 
Then there exists  a $G_{\Sigma}$-stable lattice $\Lambda$ in the space of $\sigma$ and  a $G_{\Sigma}$-stable lattice $\Lambda'$ in the space of $\tau$ such that $$\ov{\sigma}_{\Lambda} = \bmat \chi^{k-2} & a & b \\ & \rho & c \\ && \chi^{k-1} \emat \quad \textup{and} \quad  \ov{\tau}_{\Lambda'}=\bmat \chi^{k-2} & a & b' \\ & \rho & c \\ && \chi^{k-1} \emat,$$ both semi-abelian, with $a,c$ both non-trivial elements of $H^1_{f}(\bfQ, \rho(1-k))$. \end{cor}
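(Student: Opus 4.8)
The plan is to reduce Corollary~\ref{corlat} to Corollary~\ref{latt} applied separately to $\sigma$ and to $\tau$, using the hypothesis $\dim_{\bfF}H^1_f(\bfQ,\rho(1-k))\le 1$ to force the various extension classes produced by Corollary~\ref{latt} to be proportional, and then normalizing the lattices by a block-diagonal conjugation so that the $a$- and $c$-entries literally coincide.

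First I would apply Corollary~\ref{latt} to $\sigma$ to obtain a $G_\Sigma$-stable lattice $\Lambda_0$ with $\ov{\sigma}_{\Lambda_0}=\bmat \chi^{k-2} & a_0 & b_0\\ & \rho & c_0\\ && \chi^{k-1}\emat$ semi-abelian, with $c_0$ a non-split extension of $\chi^{k-1}$ by $\rho$ and $a_0$ a non-trivial extension of $\rho$ by $\chi^{k-2}$, and likewise apply it to $\tau$ to get $\Lambda_1$ with analogous non-trivial entries $a_1,b_1,c_1$. By Remark~\ref{identification}, all of $a_0,c_0,a_1,c_1$ are non-zero classes in $H^1(\bfQ,\rho(1-k))$. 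The next step is to check they actually lie in the Selmer submodule $H^1_f(\bfQ,\rho(1-k))=H^1_{f,{\rm ur}}(\bfQ,\rho(1-k))$. For the condition at $p$: the sub-representation $\bmat \chi^{k-2} & a_0\\ & \rho\emat$ and the quotient $\bmat \rho & c_0\\ & \chi^{k-1}\emat$ of $\ov{\sigma}_{\Lambda_0}$ are short crystalline, since the essential image of the Fontaine--Laffaille functor $\bfG$ is closed under subquotients, so $a_0$ and $c_0$ restrict to classes in $H^1_f(\bfQ_p,\rho(1-k))$; for $\ell\in\Sigma\setminus\{p\}$, semi-abelianness of $\ov{\sigma}_{\Lambda_0}$ together with Corollary~\ref{splitting at ell} gives $\ov{\sigma}_{\Lambda_0}|_{I_\ell}\cong 1\oplus\rho|_{I_\ell}\oplus 1$, so $a_0$ and $c_0$ are split over $I_\ell$, hence unramified there (and I would invoke Corollary~\ref{l not equal to p} to identify the unramified and minimal structures). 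The same applies to $a_1,c_1$.

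Now the hypothesis $\dim_{\bfF}H^1_f(\bfQ,\rho(1-k))\le 1$ enters: since $a_0\neq 0$ lies in this space, it is exactly one-dimensional, and each of $a_0,c_0,a_1,c_1$ is a generator; fix one generator $\xi$. It remains to normalize so that the $\rho$-block, the $a$-entry and the $c$-entry agree in both. After conjugating $\ov{\sigma}_{\Lambda_0}$ and $\ov{\tau}_{\Lambda_1}$ by block matrices $\bmat *&&\\ &P&\\ &&*\emat$ I may assume the middle block is a fixed matrix realization of $\rho$; conjugating further by $\diag(\lambda_1 I_1,\lambda_2 I_2,\lambda_3 I_1)$ — which, as $\rho$ is absolutely irreducible, only touches the middle block through the scalar $\lambda_2$ — multiplies the $a$-entry by $\lambda_1/\lambda_2$, the $c$-entry by $\lambda_2/\lambda_3$ and the $b$-entry by $\lambda_1/\lambda_3$. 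Since $\lambda_1/\lambda_2$ and $\lambda_2/\lambda_3$ can be chosen independently, I can rescale $a_0$ and $c_0$ each to $\xi$, and likewise rescale the corresponding entries of $\ov{\tau}_{\Lambda_1}$ to $\xi$. Conjugating $\ov{\sigma}_{\Lambda_0}$ by such matrices amounts to replacing $\Lambda_0$ by its image under a lift to $\GL_4(\Oo)$, so the new residual representations are still reductions of $\sigma$, resp.\ $\tau$, on $G_\Sigma$-stable lattices, and semi-abelianness is preserved. Setting $a:=\xi$, $c:=\xi$ and letting $b,b'$ be the resulting $(1,3)$-entries yields the lattices $\Lambda$, $\Lambda'$ with the asserted form.

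The main obstacle is not the final normalization, which is bookkeeping, but correctly placing $a_0,c_0$ inside $H^1_f(\bfQ,\rho(1-k))$: one needs that subquotients of a short crystalline representation remain short crystalline at $p$ (making the $p$-local condition automatic) and that semi-abelianness forces the extensions to be unramified at the primes $\ell\mid N$ via Corollary~\ref{splitting at ell}. Once these two facts are secured, the bound $\dim_{\bfF}H^1_f(\bfQ,\rho(1-k))\le 1$ does the rest.
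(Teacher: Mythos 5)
Your proposal is correct and follows essentially the same route as the paper: apply Corollary~\ref{latt} to each of $\sigma$ and $\tau$, use Corollary~\ref{splitting at ell} (together with short crystallinity of subquotients at $p$) to place the extension classes in $H^1_f(\bfQ,\rho(1-k))$, and then use one-dimensionality of that space to rescale by a block-diagonal conjugation so the $a$- and $c$-entries agree. Your write-up merely makes explicit the $p$-local verification and the normalization that the paper's proof compresses into one sentence.
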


\begin{rem} Note that in Corollary \ref{corlat} we only assume an upper bound on the dimension of $H^1_f(\bfQ, \rho(1-k))$, but in fact as argued  at the beginning  of the proof of the corollary  our assumptions imply that we always have $\dim_{\bfF}H^1_f(\bfQ, \rho(1-k))\geq 1$. \end{rem}
\begin{proof} By Corollary \ref{latt} we can find lattices, so that both  reductions  have the above shape, are short crystalline at $p$ and semi-abelian. By Corollary \ref{splitting at ell} we know that the extensions induced by the entries $a$ and $c$ lie in $H^1_{f}(\bfQ, \rho(1-k))$. Since the latter group is one-dimensional  we can conjugate one of them to ensure that both representations have the same $a$-entries and $c$-entries. \end{proof}

\section{Uniqueness of iterated residual extensions} \label{Uniqueness of iterated residual extensions}
 In this section we assume that $\rho$ is the mod $\varpi$ reduction of $\rho_f: G_{\Sigma} \to \GL_2(E)$, the Galois representation attached to a newform $f \in S_{2k-2}(N)$, 
where $N$ squarefree,  $p \nmid N$, $k$ is even and $\Sigma=\{\ell \mid N\}\cup \{p\}$. We also assume that $\rho$ is absolutely irreducible and ramified at every $\ell \mid N$. By Proposition \ref{invariants}  (see Corollary \ref{l not equal to p}) \ we then have that $H^1_{f, \rm min}(\bfQ, \rho(1-k)) = H^1_{f, \rm ur}(\bfQ, \rho(1-k))$ which we will simply abbreviate to $H^1_f(\bfQ, \rho(1-k))$. 
 We assume in this section that $$ \dim_{\bfF}H^1_f(\bfQ, \rho(1-k)) =1.$$ 

The goal of this section is to prove the following proposition. 

\begin{prop} \label{uni1} Let $\ov{\sigma}, \ov{\tau}: G_{\Sigma} \to \GL_4(\bfF)$ be two Galois representations of the form $$\ov{\sigma}= \bmat \chi^{k-2} & a & b \\ & \rho & c \\ && \chi^{k-1} \emat, \quad \ov{\tau}=\bmat \chi^{k-2} & a' & b' \\ & \rho & c' \\ && \chi^{k-1} \emat$$ with the extensions $\bmat \chi^{k-2} & a\\ & \rho \emat$, $\bmat \chi^{k-2} & a'\\ & \rho \emat$, $\bmat \rho & c\\ & \chi^{k-1} \emat$ and $\bmat \rho & c'\\ & \chi^{k-1} \emat$ all giving rise to non-zero elements of $H^1_{f}(\bfQ, \rho(1-k))=\bfF$.
  Suppose also that both $\ov{\sigma}$ and $\ov{\tau}$ are short crystalline and semi-abelian. 
Then $\ov{\sigma} \cong \ov{\tau}$. \end{prop}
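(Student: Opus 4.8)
\emph{Plan of proof.} I would construct a global change of basis conjugating $\ov{\sigma}$ into $\ov{\tau}$. Write $a,c,b$ for the $(1,2)$-, $(2,3)$- and $(1,3)$-blocks of $\ov{\sigma}$ (and $a',c',b'$ for those of $\ov{\tau}$); by Remark \ref{identification} we have $a,a',c,c'\in Z^1(G_{\Sigma},\rho(1-k))$, while $b,b'$ are $1$-cochains with $db=-a\cup c=db'$. The most general block-diagonal matrix preserving the three diagonal blocks $\chi^{k-2},\rho,\chi^{k-1}$ is $\diag(\alpha,\delta I_2,\beta)$ (since $\rho$ has scalar centralizer); conjugating $\ov{\sigma}$ by it multiplies the class of $a$ by $\alpha/\delta$ and that of $c$ by $\delta/\beta$, while conjugating by a block-upper-unipotent matrix adds coboundaries to $a$ and $c$. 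As $[a],[a'],[c],[c']$ are all nonzero in the one-dimensional space $H^1_f(\bfQ,\rho(1-k))$, I would first choose $\alpha/\delta$ and $\delta/\beta$ so that $[a]=[a']$ and $[c]=[c']$, then remove the remaining coboundary discrepancies; thus I may and do assume from now on that $a=a'$ and $c=c'$ as cochains. All of these conjugations preserve the block-upper-triangular shape, short crystallinity and semi-abelianness.

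Now $\ov{\sigma}$ and $\ov{\tau}$ agree except in the scalar $(1,4)$-entry, and a direct computation gives $\ov{\tau}(g)\ov{\sigma}(g)^{-1}=I_4+z(g)E_{14}$, where $E_{14}$ is the matrix unit and $z(g)=(b'(g)-b(g))\chi^{1-k}(g)$; the cocycle identity $\ov{\tau}(gh)\ov{\sigma}(gh)^{-1}=\ov{\tau}(g)\big(\ov{\tau}(h)\ov{\sigma}(h)^{-1}\big)\ov{\sigma}(g)^{-1}$ becomes $z\in Z^1(G_{\Sigma},\bfF(-1))$ (note $\chi^{k-2}\chi^{1-k}=\chi^{-1}$). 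Conjugating $\ov{\sigma}$ by $I_4+wE_{14}$, $w\in\bfF$, changes $b$ by the coboundary $g\mapsto w(\chi^{k-1}(g)-\chi^{k-2}(g))$ and nothing else, i.e.\ shifts $[z]$ by an arbitrary coboundary. Hence $\ov{\sigma}\cong\ov{\tau}$ as soon as $[z]=0$ in $H^1(G_{\Sigma},\bfF(-1))$, so it suffices to prove that $[z]\in H^1_f(\bfQ,\bfF(-1))$, for then $[z]=0$ by Proposition \ref{class groups}.

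It then remains to verify the two local conditions defining $H^1_f$. At $\ell\mid N$: both $\ov{\sigma}$ and $\ov{\tau}$ are semi-abelian, so for a topological generator $x$ of (the tame quotient of) $I_{\ell}$ the matrix $\ov{\sigma}(x)-I_4$ has rank one; forcing all $2\times 2$ minors of its upper-triangular $3\times 3$ block to vanish shows that the $(1,4)$-entry of $\ov{\sigma}|_{I_{\ell}}$ is completely determined by the $a$- and $c$-entries of $\ov{\sigma}|_{I_{\ell}}$ (and the cocycle relations propagate this from $x$ to all of $I_{\ell}$, using $\chi|_{I_{\ell}}=1$). Since $a=a'$ and $c=c'$ this gives $b|_{I_{\ell}}=b'|_{I_{\ell}}$, hence $z|_{I_{\ell}}=0$ and $[z]$ is unramified at every $\ell\mid N$. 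At $p$: both are short crystalline, say $\ov{\sigma}=\mathbf{G}(D)$ and $\ov{\tau}=\mathbf{G}(D')$ with $D,D'\in\mathcal{M F}_{\Oo}$. Because $a=a'$, the Fontaine--Laffaille submodule of $D$ attached to the span of the first three basis vectors coincides with the analogous submodule of $D'$; because $c=c'$, the corresponding quotient modules of $D$ and $D'$ coincide. Hence $D$ and $D'$ define two classes in the same $\Ext^1_{\mathcal{M F}_{\Oo}}$-group whose difference is supported on the $(\chi^{k-2},\chi^{k-1})$-corner, so it is the image under $\mathbf{G}$ of an element of $\Ext^1_{\mathcal{M F}_{\Oo}}(1_{\rm FD},D_{-1})$ with $\mathbf{G}(D_{-1})=\bfF(-1)$; this image is precisely $[z]$, so $[z]\in H^1_f(\bfQ_p,\bfF(-1))$ by the definition of the crystalline finite--singular structure. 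Combining, $[z]\in H^1_f(\bfQ,\bfF(-1))=0$, whence $\ov{\sigma}\cong\ov{\tau}$.

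The main obstacle is the argument at $p$: one must work inside the Fontaine--Laffaille category carefully enough to verify that, once the $a$- and $c$-data of $\ov{\sigma}$ and $\ov{\tau}$ are matched, the two filtered Dieudonn\'e modules differ only by a genuine $\mathcal{M F}_{\Oo}$-extension of the unit object by the Dieudonn\'e module of $\bfF(-1)$, i.e.\ by a \emph{crystalline} extension of $\chi^{k-1}$ by $\chi^{k-2}$. The computation at $\ell\mid N$, though elementary, is the other place where the semi-abelian hypothesis is indispensable: without it the $(1,4)$-entry of $\ov{\sigma}|_{I_{\ell}}$ need not be determined by the $a$- and $c$-entries, and $[z]$ could be ramified at $\ell$.
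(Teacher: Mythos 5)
Your proof is correct and follows essentially the same route as the paper's: after normalizing so that $a=a'$ and $c=c'$, both arguments identify the remaining ambiguity as a torsor under $H^1_f(\bfQ,\bfF(-1))$, which vanishes by Proposition \ref{class groups}. The only difference is packaging — you write down the difference cocycle $z$ explicitly and verify its local conditions by hand (your rank-one minor computation at $\ell\mid N$ is a direct form of Corollary \ref{splitting at ell}, and your Fontaine--Laffaille step at $p$ is exactly what the appeal to Lemma 5.3 of \cite{BergerKlosin13} makes rigorous), whereas the paper deduces the same facts functorially from the induced finite/singular structures and Lemma \ref{Sel_funct}.
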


\begin{proof} Denote the cohomology classes corresponding to the extensions in the statement of the Proposition by $\phi, \phi', \psi, \psi'$ respectively. 
Note that they indeed give rise to elements in $H^1_f$ 
by Corollary \ref{splitting at ell}.
Then there exist $\alpha, \gamma \in \bfF^{\times}$ such that  $\phi'=\alpha \phi$ and $\psi'=\gamma \psi$. The functions $a,a', c,c'$ are given by $a(g)=\phi(g)\rho(g)$, $a'(g)=\phi'(g)\rho(g)$, $c(g)=\psi(g)\chi^{k-1}(g)$ and $c'(g)=\psi'(g) \chi^{k-1}(g)$, so in particular $a'=\alpha a$ and $c'=\gamma c$. Hence 
$$\bmat 1 \\ & \alpha I_2 \\ && \alpha \gamma \emat \ov{\tau} \bmat 1 \\ & \alpha I_2 \\ && \alpha \gamma \emat^{-1} = \bmat \chi^{k-2} & a & \alpha^{1}\gamma^{-1} b \\ & \rho & c \\ &&\chi^{k-1}\emat.$$ Thus we may assume without loss of generality that $a'=a$ and $c'=c$. We will express $\ov{\sigma}$ as an  iterated extension and apply Lemma \ref{Sel_funct} to show that such extensions are a torsor under $H^1_f(\bfQ, \bfF(-1))=0$ from which the uniqueness of such $\ov{\sigma}$ will follow.

Consider $\ov{\sigma}$ as in the statement of the Proposition. Its subrepresentation corresponding to the $3\times 3$ upper-left block  gives rise to the following exact sequence of $G_{\Sigma}$-representations: 
\be \label{ses} 0 \to \Hom_{\bfF}(\chi^{k-1}, \chi^{k-2}) \to  \Hom_{\bfF}(\chi^{k-1}, \bmat\chi^{k-2} & a \\ & \rho \emat) \to \Hom_{\bfF}(\chi^{k-1}, \rho) \to 0,\ee and after restricting to the decomposition group at $v$, also to a corresponding short exact sequence of $G_{\bfQ_v}$-representations.

We first consider the case $v=p$: Write $\mM$ for the category of filtered Dieudonne modules as defined in \cite{ClozelHarrisTaylor08} Section 2.4.1. 
Since $\mM$ is closed under subquotients, there exist objects $M_1, M_2, M_3, M$ of $\mM$ corresponding to the restrictions to $G_{\bfQ_p}$  of $\chi^{k-2}$, $\rho$, $\chi^{k-1}$ and the subrepresentation $\bmat\chi^{k-2} & a \\ & \rho \emat: G_{\Sigma} \to \GL_3(\bfF)$ respectively. 
Since $\Hom(\chi^{k-1}, M) = M(1-k)$ we see that all the objects in \eqref{ses} are in the essential image of $\bfG$. Hence we can apply Lemma 5.3 in \cite{BergerKlosin13} to conclude that it gives rise to an exact sequence of Selmer groups   \begin{multline} \label{les1} H^0(\bfQ_p,\Hom_{\bfF}(\chi^{k-1}, \rho)) \to H^1_f(\bfQ_p, \Hom_{\bfF}(\chi^{k-1}, \chi^{k-2})) \to H^1_f(\bfQ_p, \Hom_{\bfF}(\chi^{k-1}, \bmat \chi^{k-2} & a \\0&\rho \emat ))\\ \to H^1_f(\bfQ_p,\Hom_{\bfF}(\chi^{k-1}, \rho)) \to 0.\end{multline}
It follows from \eqref{les1} that (in the terminology of \cite{Weston00}, p.4) the finite-singular structures on the first and last non-zero term in the $G_{\bfQ_p}$ sequence \eqref{ses} are induced from the middle term.  

 Let us now show that this is also true for all primes $v \neq p$ when we take $H^1_f$ to be the unramified structure defined in section \ref{Setup}. 
 So, suppose that $v \mid N$. It suffices to show that the sequence \be \label{unr1} \begin{split}
H^0(\bfQ_{v},\Hom_{\bfF}(\chi^{k-1}, \rho)) \to  H^1_{\rm ur}(\bfQ_v, \Hom_{\bfF}(\chi^{k-1},\chi^{k-2})) \to H^1_{\rm ur}(\bfQ_v, \Hom_{\bfF}(\chi^{k-1},\bmat \chi^{k-2} & a \\0&\rho \emat)) \\
\to H^1_{\rm ur}(\bfQ_v, \Hom_{\bfF}(\chi^{k-1},\rho)) \to 0\end{split}\ee
with $H^1_{\rm ur}(\bfQ_v, M) := \ker (H^1(\bfQ_v, M) \to H^1(I_v, M))$ is exact. 
 Let us rewrite \eqref{ses} as \be \label{ses2} 0 \to \bfF(-1)\to   \bmat\chi^{-1} & a' \\ & \rho(1-k) \emat) \to  \rho(1-k) \to 0\ee 
 and call the extension in the middle $\mE$. 

By  Corollary \ref{splitting at ell} we have
  $\mE^{I_v} = \bfF(-1)^{I_v} \oplus \rho(1-k)^{I_v},$ so in particular the sequence $$0 \to  \bfF(-1)^{I_v}  \to \mE^{I_v} \to \rho(1-k)^{I_v} \to 0$$ is exact. Since every module in that sequence has an action of $G:=\Gal(\bfQ_v^{\rm ur}/\bfQ_v)$, we get a long exact sequence in cohomology of that group (and note that $(M^{I_v})^{G} = M^{G_v}$, where $G_v=G_{\bfQ_v}$):
\begin{multline} 0 \to\bfF(-1)^{G_v} \to \mE^{G_v} \to \rho(1-k)^{G_v} \to H^1(\bfQ_v^{\rm ur}/\bfQ_v, \bfF(-1)^{I_v}) \to \\
 H^1(\bfQ_v^{\rm ur}/\bfQ_v, \mE^{I_v}) \to  H^1(\bfQ_v^{\rm ur}/\bfQ_v, \rho(1-k)^{I_v}) \to  H^2(\bfQ_v^{\rm ur}/\bfQ_v, \bfF(-1)^{I_v}) \end{multline}
Note that the last group is zero since $G$ has cohomological dimension one. For all three modules $M$ one has $H^1(\bfQ_v^{\rm ur}/\bfQ_v, M^{I_v})=H^1_{\rm ur}(\bfQ_v, M)$ by Lemma 3.2 (i) in \cite{Rubin00}. 

Since we now showed that the finite-singular structures on the first and last non-zero term in the sequence \ref{ses} are induced from the middle term for all $G_{\bfQ_{v}}$, Lemma \ref{Sel_funct} tells us that we have an exact sequence 
\be \label{selgps}
 H^1_{f}(\bfQ, \Hom_{\bfF}(\chi^{k-1},\chi^{k-2})) \to H^1_{f}(\bfQ, \Hom_{\bfF}(\chi^{k-1},\bmat \chi^{k-2} & a \\0&\rho \emat)) \to H^1_{f}(\bfQ, \Hom_{\bfF}(\chi^{k-1},\rho)).\ee
Since $H^1_{f}(\bfQ, \Hom_{\bfF}(\chi^{k-1},\chi^{k-2})) = H^1_{f}(\bfQ, \bfF(-1))=0$ by Proposition \ref{class groups},
we see that the middle Selmer group in \eqref{selgps} injects into the last one. Hence 
the cohomology class in  $H^1_{f}(\bfQ, \Hom_{\bfF}(\chi^{k-1}, \rho))$ corresponding to the quotient representation $\bmat \rho & c \\ 0 & \chi^{k-1}\emat$  of $\ov{\sigma}$ determines a short crystalline extension of $\chi^{k-1}$ by $\bmat \chi^{k-2} & a \\ 0 & \rho\emat$ uniquely.  Since both $\ov{\sigma}$ and $\ov{\tau}$ are such extensions, we get $\ov{\sigma} \cong \ov{\tau}$. \end{proof}

\section{The ideals of reducibility} \label{The ideals of reducibility}
Let
$\rho_f : G_{\Sigma} \to \GL_2(E)$ be the Galois representation attached to a newform $f \in S_{2k-2}(N)$ for $N$ a square-free integer with $p \nmid N$ and $\Sigma=\{ \ell \mid N \} \cup \{p\}$. Suppose also that the residual representation $\rho:=\ov{\rho}_f$ is absolutely irreducible and ramified at every prime $\ell \mid N$. Then $\rho|_{I_{\ell}} \cong \bmat 1&*\\ &1\emat$ (see \eqref{min for rho 2}). Let $$\ov{\sigma}=\bmat \chi^{k-2} & a&b \\ & \rho &c \\ &&\chi^{k-1}\emat:G_{\Sigma} \to \GL_4(\bfF)$$ be short crystalline at $p$ and semi-abelian with $a,c$ non-trivial classes in $H^1_f(\bfQ, \rho(1-k))$.

The  universal deformation  $\sigma^{\rm red}$ (resp. its trace) gives rise to an $R^{\rm red}$-algebra morphism (which we denote by the same letter) $\sigma^{\rm red}: R^{\rm red}[G_{\Sigma}] \to M_4(R^{\rm red})$ (resp. $T=\tr \sigma^{\rm red}: R^{\rm red}[G_{\Sigma}]\to R^{\rm red}$). We define $\ker \sigma^{\rm red}$ (resp. $\ker T$) as in \cite{BellaicheChenevierbook}, section 1.2.4. 

Since $R$, and hence also $R^{\rm red}$, is Noetherian, both rings have a finite number of minimal primes. Then the total ring of fractions ${\rm Tot}(R^{\rm red})$ of $R^{\rm red}$ is a finite product of fields. In fact we have an injection $$R^{\rm red} \hookrightarrow {\rm Tot}(R^{\rm red})= \prod_{\mP}R^{\rm red}_{\mP},$$ where $\mP$ runs over minimal primes of $R^{\rm red}$ and the localization $R^{\rm red}_{\mP}$ is a field (cf. \cite{BellaicheChenevierbook}, Proposition 1.3.11).

To ease notation in this section we will write $G$ for $G_{\Sigma}$. 
Let us note that both $R^{\rm red}[G]/\ker T$ and $R^{\rm red}[G]/\ker \sigma^{\rm red}$ are Cayley-Hamilton quotients of $R^{\rm red}[G]$ in the sense of \cite{BellaicheChenevierbook}, section 1 and we have a canonical $R^{\rm red}$-algebra map $\varphi : R^{\rm red}[G]/\ker \sigma^{\rm red} \twoheadrightarrow R^{\rm red}[G]/\ker T$. 
\begin{thm} \label{1.4.4} One has the following:
\begin{itemize}
\item[(i)] Let $? \in \{T, \sigma\}$ and $i,j \in \{k-1, \rho, k-2\}$. Let $S^?$ be $R^{\rm red}[G]/\ker T$ (when $?=T$) or $R^{\rm red}[G]/\ker \sigma^{\rm red}$ (when $?=\sigma$). Then there  are  data $\mE$ of idempotents for which $S^?$ is a GMA (Generalized Matrix Algebra) and there exist $R^{\rm red}$-submodules $\mA^?_{i,j}$ of $S^?$ which satisfy $$\mA^?_{i,j}\mA^?_{j,k} \subset \mA^?_{i,k}, \quad T: \mA^?_{i,i} \xrightarrow{\sim} R^{\rm red}, \quad T(\mA^?_{i,j}\mA^?_{j,i}) \subset \fm_{R^{\rm red}},$$ and $$S^? \cong {\rm GMA}(\mA^?):=\bmat \mA^?_{k-2, k-2} &M_{1, 2}(\mA^?_{k-2, \rho}) & \mA^?_{k-2, k-1}\\ M_{2,1}(\mA^?_{\rho, k-2}) & M_{2,2}(\mA^?_{\rho, \rho}) & M_{2,1}(\mA^?_{\rho, k-1}) \\ \mA^?_{k-1, k-2} & M_{1,2}(\mA^?_{k-1, \rho}) & \mA^?_{k-1, k-1}\emat.$$ The isomorphism $S^? \cong {\rm GMA}(\mA^?)$ (and also $S^?\cong {\rm GMA}(A^?)$ below) is an $R^{\rm red}$-algebra morphism.
\item[(ii)] For $\mE$ as in (i) one has  $$S^? \cong {\rm GMA}(A^?):=\bmat R^{\rm red} &M_{1, 2}(A^?_{k-2, \rho}) & A^?_{k-2, k-1}\\ M_{2,1}(A^?_{\rho, k-2}) & M_{2,2}(R^{\rm red}) & M_{2,1}(A^?_{\rho, k-1}) \\ A^?_{k-1, k-2} & M_{1,2}(A^?_{k-1, \rho}) & R^{\rm red} \emat \subset M_4({\rm Tot}(R^{\rm red})),$$  where $A^?_{i,j}$ are fractional ideals of $R^{\rm red}$ (and $A^{\sigma}_{i,j}$ are ideals of $R^{\rm red}$) that satisfy the \emph{Chasles relations} $$A^?_{i,j}A^?_{j,i} \subset A^?_{i,k}, \quad A^?_{i,i} = R^{\rm red}, \quad A^?_{i,j}A^?_{j,i} \subset \fm_{R^{\rm red}}.$$
\item[(iii)] The data $\mE$ can be adapted so that the $R^{\rm red}$-algebra map $\varphi': {\rm GMA}(\mA^{\sigma}) \twoheadrightarrow {\rm GMA}(\mA^T)$ induced from $\varphi$ has the property that $\varphi'(\mA^{\sigma}_{i,j})=\mA^T_{i,j}$. 
\end{itemize}
 \end{thm}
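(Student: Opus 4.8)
The strategy is to obtain (i) and (ii), for each of $S^T$ and $S^\sigma$ separately, from the structure theory of Cayley--Hamilton algebras with residually multiplicity free pseudocharacter (\cite{BellaicheChenevierbook}, Theorem 1.4.4 and the discussion of generalized matrix algebras in its Section 1.3), and to obtain the compatibility (iii) by transporting the idempotent data from $S^\sigma$ to $S^T$ along the canonical surjection $\varphi$. Write $G=G_\Sigma$ and $T=\tr\sigma^{\rm red}$. First I would record the inputs needed to invoke that theory: $R^{\rm red}$ is a complete local Noetherian $\Oo$-algebra with residue field $\bfF$, hence henselian, and it is reduced; both $(S^T,T)$ and $(S^\sigma,T)$ are Cayley--Hamilton $R^{\rm red}$-algebras for the $4$-dimensional pseudorepresentation $T$ (for $S^\sigma$ because it is by construction a subalgebra of $M_4(R^{\rm red})$ with $T$ the matrix trace, and matrices satisfy their characteristic polynomials; for $S^T$ by the definition of $\ker T$ in \cite{BellaicheChenevierbook}, Section 1.2); and the residual pseudorepresentation $\ov{T}=\tr\chi^{k-2}+\tr\rho+\tr\chi^{k-1}$ is multiplicity free, since $\chi^{k-2}$, $\chi^{k-1}$ and $\rho$ are pairwise non-isomorphic ($p$ is odd and $\rho$ is absolutely irreducible of dimension $2$), with block dimensions $(d_{k-2},d_\rho,d_{k-1})=(1,2,1)$.

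For a fixed one of $S^T$, $S^\sigma$, parts (i) and (ii) are then exactly the cited structure theorem: any data of idempotents $\mE$ adapted to the decomposition $\chi^{k-2}\oplus\rho\oplus\chi^{k-1}$ exhibits $S^?$ as ${\rm GMA}(\mA^?)$ of the displayed $3\times 3$ block shape (the middle block is $M_{2,2}$ because $d_\rho=2$), where $\mA^?_{i,j}$ is the $R^{\rm red}$-module carved out of $e_iS^?e_j$ by the matrix-unit part of $\mE$, and the relations $\mA^?_{i,j}\mA^?_{j,k}\subseteq\mA^?_{i,k}$, $T\colon\mA^?_{i,i}\xrightarrow{\sim}R^{\rm red}$ and $T(\mA^?_{i,j}\mA^?_{j,i})\subseteq\fm_{R^{\rm red}}$ are part of its conclusion (the last from residual multiplicity-freeness: off-diagonal products of blocks vanish in the semisimple quotient $\prod_iM_{d_i}(\bfF)$ of $S^?/\fm_{R^{\rm red}}S^?$, so $\ov{T}$ annihilates them). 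For (ii): since $R^{\rm red}$ is reduced Noetherian, ${\rm Tot}(R^{\rm red})=\prod_\mP R^{\rm red}_\mP$ is a finite product of fields, and the embedding of a GMA over a reduced ring into $M_4$ of its total ring of fractions (the discussion in \cite{BellaicheChenevierbook}, Section 1.3) turns each $\mA^?_{i,j}$ into a fractional ideal $A^?_{i,j}$; for $?=\sigma$ one additionally conjugates the orthogonal idempotents $e_i\in S^\sigma\subseteq M_4(R^{\rm red})$ to the standard block-diagonal ones by an element of $\GL_4(R^{\rm red})$ — possible because idempotents lift and become conjugate over a henselian local ring — so that $S^\sigma$ sits in $M_4(R^{\rm red})$ in block form and each $A^\sigma_{i,j}$ is an honest ideal of $R^{\rm red}$.

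For (iii) the point is that $\varphi\colon S^\sigma\twoheadrightarrow S^T$ is a surjective $R^{\rm red}$-algebra map intertwining the pseudorepresentations $T$, so I would not pick the two data of idempotents independently but instead choose $\mE=(e_{k-2},e_\rho,e_{k-1})$ on $S^\sigma$ first and take its image $\varphi(\mE)$ as the data on $S^T$. Since $\varphi$ is unital, multiplicative, $R^{\rm red}$-linear and trace-compatible, $\varphi(\mE)$ is an adapted data of idempotents on $S^T$, and by surjectivity $\varphi$ carries $e_iS^\sigma e_j$ onto $\varphi(e_i)S^T\varphi(e_j)$ for all $i,j$; identifying both sides with $M_{d_i\times d_j}(\mA^?_{i,j})$ through the chosen (compatible) matrix-unit normalisations, this says exactly $\varphi'(\mA^\sigma_{i,j})=\mA^T_{i,j}$, with $\varphi'$ surjective because $\varphi$ is. I expect the one delicate point — bookkeeping rather than a genuine obstacle — to be the diagonal $\rho$-block: $\varphi$ restricts to a surjection $e_\rho S^\sigma e_\rho\twoheadrightarrow\varphi(e_\rho)S^T\varphi(e_\rho)$ of $R^{\rm red}$-algebras whose source is $\cong M_2(R^{\rm red})$, hence the target is $M_2(R^{\rm red}/I)$ for some ideal $I$; but the target must satisfy $T\colon\mA^T_{\rho,\rho}\xrightarrow{\sim}R^{\rm red}$ by (i), forcing $R^{\rm red}/I\cong R^{\rm red}$ and so $I=0$ (a surjective endomorphism of a Noetherian module is injective), so $\varphi$ is an isomorphism on each diagonal block and the transport of the GMA data is unambiguous. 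Note in particular that, in contrast with the ramification-splitting arguments in \cite{BellaicheChenevierbook}, nothing here requires controlling ramification at the primes of $\Sigma$.
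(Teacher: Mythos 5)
Your proposal is correct and follows essentially the same route as the paper: parts (i) and (ii) are obtained by invoking Bella\"iche--Chenevier's structure theorem for Cayley--Hamilton algebras with residually multiplicity-free pseudocharacter (their Theorem 1.4.4 together with the GMA material of their Section 1.3, including the embedding into $M_4$ of the total fraction ring and the realization of the $\mA^\sigma_{i,j}$ as honest ideals via an adapted faithful representation), and part (iii) is obtained by transporting the data of idempotents along the surjection $\varphi$, exactly as in the arguments of \cite{BergerKlosin13} (Proposition 2.8 and Lemma 2.5) that the paper cites. Your extra observation that $\varphi$ is forced to be an isomorphism on each diagonal block is a correct and harmless elaboration of the same mechanism.
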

\begin{proof} The statements (i), (ii) for $?=T$ and (i) for $?=\sigma$ are a direct consequence of Theorem 1.4.4 in \cite{BellaicheChenevierbook}. To show that we can also obtain (ii) for $?=\sigma$ we argue as in \cite{BergerKlosin13}, Proposition 2.8 which uses Lemma 1.3.7 of \cite{BellaicheChenevierbook}. Finally, we proceed as in the proof of Lemma 2.5 in \cite{BergerKlosin13} to show that $\mE$ can be adapted to satisfy (iii). \end{proof}

\begin{rem} \label{adaptness11}  The data $\mE=\{e_i, \psi_i\mid i \in \{\chi^{k-2}, \rho, \chi^{k-1}\}\}$ in Theorem \ref{1.4.4} can be (and will be) chosen so that for $i \in \{\chi^{k-2}, \rho, \chi^{k-1}\}$ the maps $\psi_i: e_i S^{\sigma} e_i \xrightarrow{\sim} M_{d_i, d_i}(R^{\rm red})$ satisfy $\psi_i\otimes \bfF \cong i$. Here $d_i=\dim i$. From now on fix $\mE$. \end{rem}

Using Theorem \ref{1.4.4} we get the following commutative diagram of $R^{\rm red}$-algebras:
$$\xymatrix{{\rm GMA}(\mA^{\sigma}) \ar[d]^{\varphi'}&R^{\rm red}[G_{\Sigma}]/\ker \sigma^{\rm red} \ar[l]^{\iota_1}_{\sim} \ar[r]^{\iota_2}_{\sim} \ar[d]^{\varphi} & {\rm GMA}(A^{\sigma})\\ {\rm GMA}(\mA^{T}) & R^{\rm red}[G_{\Sigma}]/\ker T  \ar[l]^{\iota_3}_{\sim} \ar[r]^{\iota_4}_{\sim} & {\rm GMA}(A^T)}$$ where the maps $\iota_i$ are the ones given by Theorem \ref{1.4.4}. Using Lemma 1.3.8 (resp. Proposition 1.3.12) of \cite{BellaicheChenevierbook} we get that the composite $\iota_2 \circ \iota_1^{-1}$ (resp. $\iota_4 \circ \iota_3^{-1}$) give rise to isomorphisms of $R^{\rm red}$-modules $f_{i,j}^{\sigma}: \mA^{\sigma}_{i,j} \xrightarrow{\sim} A^{\sigma}_{i,j}$ (resp. $f_{i,j}^{T}: \mA^{T}_{i,j} \xrightarrow{\sim} A^{T}_{i,j}$). We define  $R^{\rm red}$-module surjections $\phi_{i,j}$  to make the following diagram commute \be \label{diag phi} \xymatrix{\mA^{\sigma}_{i,j} \ar[r]^{f_{i,j}^{\sigma}} \ar[d]_{\varphi'} & A^{\sigma}_{i,j} \ar[d]^{\phi_{i,j}} \\ \mA^T_{i,j} \ar[r]_{f^T_{i,j}} & A^T_{i,j}}\ee

\begin{definition}[\cite{BellaicheChenevierbook} Definition 1.5.2] Let $\mP=(\mP_1, \dots, \mP_s)$ be a partition of the set $\mI  = \{\chi^{k-2}, \rho, \chi^{k-1}\}$. The ideal of reducibility $I^{\mP}$ (associated with partition $\mP$) is the smallest ideal $I$ of $R^{\rm red}$ with the the property that
 there exist pseudocharacters $T_1, \dots, T_s: R^{\rm red}[G]/I R^{\rm red}[G]  \to R^{\rm red}/I$ such that \begin{itemize}
\item[(i)] $T \otimes R^{\rm red}/I = \sum_{l=1}^s T_l$,
\item[(ii)] for each $l \in \{1, \dots, s\}$, $T_l \otimes \bfF =\sum_{\rho' \in \mP_l} \tr \rho'$. \end{itemize} \end{definition}

To shorten notation we will sometimes write $k-1, k-2, \rho$ for the elements of $\mI$ instead of $\chi^{k-1}, \chi^{k-2}, \rho$. 
\begin{prop} [Proposition 1.5.1 in \cite{BellaicheChenevierbook}] \label{CH1} For every partition $\mP$ the corresponding ideal of reducibility $I^{\mP}$ exists. Furthermore, let $S$ be any Cayley-Hamilton quotient of $(R^{\rm red}, T)$ (we will only use $(R^{\rm red}, T)$ and $(R^{\rm red}, \sigma^{\rm red})$) and choose   data of idempotents $\mE$ as in Theorem \ref{1.4.4} so that $S\cong {\rm GMA}(\mA^?)$. Then $I^{\mP}$ is given by the following formula (whose sides do not depend on the choice of $S$ or $\mE$) $$I^{\mP}=\sum_{\substack{(i,j)\\ \textup{$i,j$ not in the same $\mP_l$}}} T(\mA^?_{i,j} \mA^?_{j,i}).$$  \end{prop}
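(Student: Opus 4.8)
The plan is to follow the argument of Bella\"iche--Chenevier (\cite{BellaicheChenevierbook}, proof of Proposition 1.5.1), the only difference being that we allow an arbitrary partition $\mP$ of the three-element set $\mI$ rather than only the finest one. First I would fix a Cayley--Hamilton quotient $S \in \{R^{\rm red}[G]/\ker T,\ R^{\rm red}[G]/\ker\sigma^{\rm red}\}$ together with data of idempotents $\mE$ as in Theorem \ref{1.4.4}, so that $S \cong {\rm GMA}(\mA^?)$, and for a block $\mP_l$ of $\mP$ put $e_{\mP_l} = \sum_{i \in \mP_l} e_i$, an idempotent of $S$. Set
$$J := \sum_{\substack{(i,j)\\ i,j\ \text{not in the same}\ \mP_l}} T\big(\mA^?_{i,j}\mA^?_{j,i}\big)\ \subseteq\ R^{\rm red},$$
which is a proper ideal since each $T(\mA^?_{i,j}\mA^?_{j,i}) \subseteq \fm_{R^{\rm red}}$ by Theorem \ref{1.4.4}(i). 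The goal is to show that $I^{\mP}$ exists and equals $J$; independence of the right-hand side from the choice of $(S,\mE)$ is then automatic, because the left-hand side depends only on $(R^{\rm red},T)$, so proving the equality for one admissible pair proves it for all.

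Next I would check that $J$ satisfies conditions (i) and (ii) defining $I^{\mP}$, i.e.\ that $\bar T := T \otimes R^{\rm red}/J$ is $\mP$-reducible. For any GMA the orthogonal idempotents $e_{\mP_1},\dots,e_{\mP_s}$ sum to $1$ and are $T$-orthogonal, so the functions $T_l(x) := T(e_{\mP_l}\, x\, e_{\mP_l})$ satisfy $T = \sum_l T_l$ automatically; the role of $J$ is that the ``cross'' elements $e_{\mP_l} x\, e_{\mP_m} y\, e_{\mP_l}$ (for $l \neq m$) lie in $\sum \mA^?_{i,j}\mA^?_{j,i}$ with $i,j$ in distinct blocks and hence have trace in $J$, so a short computation with the GMA multiplication shows that modulo $J$ each $\bar T_l$ is a Cayley--Hamilton pseudocharacter of dimension $\sum_{i\in\mP_l}d_i$ and that $\bar T = \sum_l \bar T_l$. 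Condition (ii), namely $\bar T_l\otimes\bfF = \sum_{\rho'\in\mP_l}\tr\rho'$, is then immediate from the adaptedness of $\mE$ recorded in Remark \ref{adaptness11} (which gives $\psi_i\otimes\bfF\cong i$, so that $e_{\mP_l}$ reduces modulo $\fm_{R^{\rm red}}$ to the idempotent cutting out the residual pieces indexed by $\mP_l$).

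Then I would prove minimality: if $I \subseteq R^{\rm red}$ is any ideal with $T\otimes R^{\rm red}/I = \sum_{l=1}^s T_l$ and $T_l\otimes\bfF = \sum_{\rho'\in\mP_l}\tr\rho'$, then $J \subseteq I$. Such a decomposition exhibits $T \otimes R^{\rm red}/I$ --- a residually multiplicity-free pseudocharacter over the complete Noetherian local $\Oo$-algebra $R^{\rm red}/I$ with residue field $\bfF$ --- as a sum of pseudocharacters with pairwise disjoint residual constituents. By the structure theory of \cite{BellaicheChenevierbook} (existence and uniqueness up to conjugation of an orthogonal system of idempotents attached to such a decomposition, together with Lemma 1.3.7 on conjugating idempotents lifting a given residual idempotent system) there is a unit $u \in (S/IS)^\times$ reducing to $1$ modulo $\fm_{R^{\rm red}}$ and conjugating the images of $e_{\mP_1},\dots,e_{\mP_s}$ to idempotents $e'_l$ with $T_l(x) = \bar T(e'_l x)$. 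Tracking the blocks $\mA^?_{i,j}$ of the GMA through this conjugation forces the image of $T(\mA^?_{i,j}\mA^?_{j,i})$ in $R^{\rm red}/I$ to vanish whenever $i,j$ lie in different blocks, i.e.\ $T(\mA^?_{i,j}\mA^?_{j,i}) \subseteq I$; summing over such pairs gives $J \subseteq I$. Together with the previous paragraph this shows $I^{\mP}$ exists and equals $J$.

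The hard part will be the minimality direction --- specifically, turning ``$T$ is $\mP$-reducible modulo $I$'' into ``the idempotents $e_{\mP_l}$ can be conjugated, modulo $I$, into a system adapted to the decomposition, so that the off-block trace products die modulo $I$''. This is precisely the step where one invokes Bella\"iche--Chenevier's analysis of residually multiplicity-free pseudocharacters, and one must check that its hypotheses survive base change to $R^{\rm red}/I$; they do, since the residual pseudocharacter, and hence the residual multiplicity-one property, is unchanged. By contrast, verifying that $J$ itself works and the bookkeeping with the GMA products are routine.
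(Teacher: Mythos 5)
The paper gives no proof of this proposition at all---it is quoted verbatim as Proposition 1.5.1 of Bella\"iche--Chenevier, and your argument is a faithful reconstruction of their proof (show the displayed sum $J$ makes $T$ $\mP$-reducible because the off-block trace products are exactly the obstruction, then use the uniqueness/conjugation theory of idempotent data to show any ideal realizing the decomposition contains $J$). Your outline is correct and takes the same route as the cited source, so there is nothing to add.
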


\begin{cor} For $?\in \{T, \sigma^{\rm red}\}$ let $A^?_{i,j}$ be defined as in Theorem \ref{1.4.4} (ii). Then one has $$I^{\mP} = \sum_{\substack{(i,j)\\ \textup{$i,j$ not in the same $\mP_l$}}} A^?_{i,j} A^?_{j,i}.$$ \end{cor}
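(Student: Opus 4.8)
The plan is to deduce the identity from Proposition~\ref{CH1} by pushing its formula through the isomorphism between the two GMA realizations of $S^?$ that Theorem~\ref{1.4.4} provides. Fix $?\in\{T,\sigma^{\rm red}\}$ and let $g^?\colon {\rm GMA}(\mA^?)\xrightarrow{\sim}{\rm GMA}(A^?)$ be the composite isomorphism from the commutative diagram displayed above (that is, $g^?=\iota_2\circ\iota_1^{-1}$ when $?=\sigma^{\rm red}$ and $g^?=\iota_4\circ\iota_3^{-1}$ when $?=T$); by Theorem~\ref{1.4.4} this is an $R^{\rm red}$-algebra isomorphism whose restriction to the component $\mA^?_{i,j}$ is the $R^{\rm red}$-module isomorphism $f^?_{i,j}\colon\mA^?_{i,j}\xrightarrow{\sim}A^?_{i,j}$. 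Because $g^?$ is multiplicative it sends the $R^{\rm red}$-submodule $\mA^?_{i,j}\mA^?_{j,i}\subseteq\mA^?_{i,i}$ onto the $R^{\rm red}$-submodule of $A^?_{i,i}$ generated by the products of elements of $A^?_{i,j}$ with elements of $A^?_{j,i}$, i.e. onto $A^?_{i,j}A^?_{j,i}$.

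First I would record that $g^?$ intertwines the pseudocharacter $T=\tr\sigma^{\rm red}$: on both ${\rm GMA}(\mA^?)$ and ${\rm GMA}(A^?)$ this is the pseudocharacter transported from $S^?$ along $\iota_1$ (resp.\ $\iota_3$) and $\iota_2$ (resp.\ $\iota_4$), so $T|_{{\rm GMA}(A^?)}\circ g^?=T|_{{\rm GMA}(\mA^?)}$. On ${\rm GMA}(A^?)\subseteq M_4({\rm Tot}(R^{\rm red}))$ this pseudocharacter is the restriction of the matrix trace, which on the diagonal block of a one-dimensional factor $i\in\{\chi^{k-2},\chi^{k-1}\}$ is the identity $A^?_{i,i}=R^{\rm red}\to R^{\rm red}$ and on the block of $i=\rho$ is the $2\times2$ matrix trace $M_2(R^{\rm red})\to R^{\rm red}$ --- this being exactly the normalization built into Theorem~\ref{1.4.4}(ii). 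Consequently $T(\mA^?_{i,j}\mA^?_{j,i})=T\bigl(g^?(\mA^?_{i,j}\mA^?_{j,i})\bigr)=A^?_{i,j}A^?_{j,i}$ whenever $i$ is one-dimensional; and for $i=\rho$ the trace of the product of a column vector with entries in $A^?_{\rho,j}$ by a row vector with entries in $A^?_{j,\rho}$ is the sum $\sum_l w_lv_l$, and as these vectors vary one obtains precisely the fractional-ideal product $A^?_{\rho,j}A^?_{j,\rho}$, so again $T(\mA^?_{\rho,j}\mA^?_{j,\rho})=A^?_{\rho,j}A^?_{j,\rho}$. All of these products lie in $\fm_{R^{\rm red}}$ by Theorem~\ref{1.4.4}(ii), hence are genuine ideals of $R^{\rm red}$.

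Combining the two paragraphs with Proposition~\ref{CH1} then yields
\[
I^{\mP}=\sum_{\substack{(i,j)\\ i,j\text{ not in the same }\mP_l}}T(\mA^?_{i,j}\mA^?_{j,i})=\sum_{\substack{(i,j)\\ i,j\text{ not in the same }\mP_l}}A^?_{i,j}A^?_{j,i},
\]
which is the claim. The part I expect to take the most care is the second paragraph: correctly matching Bella\"\i che--Chenevier's conventions so that the pseudocharacter attached to ${\rm GMA}(A^?)$ really is the matrix trace with the stated normalization on each diagonal block --- in particular so that the $i=\rho$ computation returns the plain product of fractional ideals rather than some twist of it. Once that compatibility is nailed down, everything else is formal.
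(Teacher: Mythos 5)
Your proposal is correct and follows essentially the same route as the paper: the paper's proof also pushes the formula of Proposition~\ref{CH1} through the $R^{\rm red}$-algebra isomorphism between the two GMA realizations, observing that multiplication of the $\mA^?_{i,j}$ is matrix multiplication and that the diagonal identification $f^?_{i,i}$ is $T$ itself, so that $T(\mA^?_{i,j}\mA^?_{j,i})=A^?_{i,j}A^?_{j,i}$. Your extra care about the trace normalization on the $\rho$-block is exactly the content of the paper's commutative diagram, just spelled out.
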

\begin{proof} Multiplication between elements of $\alpha \in \mA^?_{i,j}$ and $\beta \in \mA^?_{j,k}$ corresponds to correct matrix multiplication, i.e., one puts 
$\alpha$ in the $(i,j)$th spot (which may be a block) of a matrix and $\beta$ in the $(j,k)$th spot and completes both matrices by putting zeros elsewhere. Then $\alpha \beta$ is the $(i,k)$th spot in the matrix obtained as a product of the matrices above.  The corollary follows from  the commutativity of the following diagram $$\xymatrix{\mA^?_{i,j} \mA^?_{j,i} \ar@{^{(}->}[r] \ar[d]_{f_{i,j}^?\otimes f_{j,i}^?} & \mA_{i,i} \ar[d]_{\sim}^T \\ A^?_{i,j}A^?_{j,i} \ar@{^{(}->}[r] & R^{\rm red}}$$   since $f^?_{i,i}=T:\mA_{i,i} \to R^{\rm red}$.  \end{proof}

In our situation we have 4 possible partitions of $\mP$ and the following 4 corresponding ideals of reducibility. 
\be \begin{split}
\mP = \{\chi^{k-2}, \rho\} \cup \{\chi^{k-1}\} \implies  I^{\mP}=I^{k-1} =& A_{k-1, k-2}A_{k-2,k-1}+ A_{\rho, k-1}A_{k-1, \rho}\\
\mP = \{\chi^{k-1}, \rho\} \cup \{\chi^{k-2}\} \implies  I^{\mP} =I^{k-2}=& A_{k-1, k-2}A_{k-2,k-1}+ A_{\rho, k-2}A_{k-2, \rho}\\
\mP = \{\chi^{k-2}, \chi^{k-1}\} \cup \{\rho\} \implies  I^{\mP} =I^{\rho}=& A_{k-2, \rho}A_{\rho,k-2}+ A_{k-1, \rho}A_{\rho, k-1}\\
\mP = \{\chi^{k-2}\}\cup\{ \rho\} \cup \{\chi^{k-1}\} \implies  I^{\mP}=I^{\rm tot} =& A_{k-1, k-2}A_{k-2,k-1}+ A_{\rho, k-1}A_{k-1, \rho}\\
+& A_{\rho, k-2}A_{k-2, \rho}
\end{split}\ee

 We have the following analogue of \cite{BellaicheChenevierbook}, Lemma 9.3.1:
\begin{thm} \label{4}  One has \begin{itemize}
\item[(i)] All the ideals of reducibility coincide with $I^{\rm tot}$,
\item[(ii)] $I^{\rm tot} = A^T_{k-2, \rho}A^T_{\rho,k-2}$,
\item[(iii)] If, in addition, $\dim_{\bfF}H^1_f(\bfQ, \rho(2-k))=1$ and $p \nmid 1+w_{f, \ell}\ell$ for all $\ell \mid N$, then $I^{\rm tot}$ is a principal ideal of $R^{\rm red}$.
\end{itemize}
\end{thm}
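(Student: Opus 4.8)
The plan is to follow the template of Bellaïche--Chenevier's Lemma 9.3.1, but to supply the key geometric inputs by hand in our characteristic-$p$, ramified setting. I would organize the argument around the three parts.

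\emph{Part (i): all ideals of reducibility coincide with $I^{\rm tot}$.} The containment $I^{\mP} \subseteq I^{\rm tot}$ is automatic, so the content is the reverse. Since $I^{\rm tot}$ is generated by the three products $A^T_{k-1,k-2}A^T_{k-2,k-1}$, $A^T_{\rho,k-1}A^T_{k-1,\rho}$ and $A^T_{k-2,\rho}A^T_{\rho,k-2}$ of off-diagonal ideals, it suffices to show that each such product is contained in every $I^{\mP}$, equivalently that $I^{\rho}$, $I^{k-1}$ and $I^{k-2}$ all equal $I^{\rm tot}$. The mechanism, exactly as in \cite{BellaicheChenevierbook} \S9.3, is multiplicativity: for instance $A^T_{k-1,\rho}A^T_{\rho,k-2} \subseteq A^T_{k-1,k-2}$ and $A^T_{k-2,\rho}A^T_{\rho,k-1}\subseteq A^T_{k-2,k-1}$, so modulo $I^{k-2}$ (which contains $A^T_{k-2,\rho}A^T_{\rho,k-2}$ and $A^T_{k-1,k-2}A^T_{k-2,k-1}$) the remaining generator $A^T_{\rho,k-1}A^T_{k-1,\rho}$ can be shown to vanish by producing enough extension classes. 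Concretely, I would work modulo $I^{\mP}$, look at the residual extensions read off from the GMA structure (using Remark \ref{adaptness11}), and use the uniqueness input of Section \ref{Uniqueness of iterated residual extensions} together with the vanishing $H^1_f(\bfQ,\bfF(-1))=0$ (Proposition \ref{class groups}) to force the "missing" off-diagonal block to be reducible, i.e.\ its reducibility ideal to lie in $I^{\mP}$. The key point that makes the $(1,\rho,\chi)$-ordering work is that the bad direction — an extension of $\chi^{k-2}$ by $\chi^{k-1}$, controlled by $H^1_f(\bfQ,\bfF(-1))=0$ — is the one that gets pinched off, whereas the potentially large Selmer group $H_0$ never enters here.

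\emph{Part (ii): $I^{\rm tot}=A^T_{k-2,\rho}A^T_{\rho,k-2}$.} Given part (i), it is enough to show that the other two generators of $I^{\rm tot}$, namely $A^T_{k-1,k-2}A^T_{k-2,k-1}$ and $A^T_{\rho,k-1}A^T_{k-1,\rho}$, are already contained in $A^T_{k-2,\rho}A^T_{\rho,k-2}$. Using $A^T_{k-1,\rho}A^T_{\rho,k-2}\subseteq A^T_{k-1,k-2}$ and the dual inclusion one reduces to showing $A^T_{k-1,k-2}A^T_{k-2,k-1}\subseteq A^T_{k-2,\rho}A^T_{\rho,k-2}$, and this is where the \emph{order} of the Jordan--Hölder factors and the self-duality $\tau$ are used: the antiinvolution $\tau$ interchanges $\chi^{k-2}$ and $\chi^{k-1}$ and fixes $\rho$, so it identifies $A^T_{k-1,k-2}$ with $A^T_{k-1,k-2}$ in a way that, combined with the upper-triangular shape forced by the lattice $\Lambda$ (Corollary \ref{latt}, via the non-split $*_1$ and $*_3$), shows the $(k-1,k-2)$-block factors through $\rho$. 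I would make this precise by the same GMA bookkeeping as in \cite{BellaicheChenevierbook} Lemma 9.3.1, noting that the non-triviality of the extensions $a$ and $c$ (which is part of the standing hypothesis on $\ov\sigma$, guaranteed by Corollary \ref{corlat}) is exactly what forces $A^T_{k-2,\rho}=A^T_{\rho,k-1}=R^{\rm red}$ — wait, more precisely, that these ideals are "as large as possible" — so that the product inclusions go the right way.

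\emph{Part (iii): principality.} Here I would run the Bellaïche--Chenevier argument of \cite{BellaicheChenevierbook} \S9 in its "reducibility ideal is generated by one element coming from a one-dimensional $\mathrm{Ext}$" form. By part (ii), $I^{\rm tot}=A^T_{k-2,\rho}A^T_{\rho,k-2}$; dualizing the GMA one gets a perfect-type pairing (over $R^{\rm red}/I^{\rm tot}$) between $A^T_{k-2,\rho}/I^{\rm tot}A^T_{k-2,\rho}$ and $A^T_{\rho,k-2}/I^{\rm tot}A^T_{\rho,k-2}$, which inject into the Selmer groups $H^1_f(\bfQ,\rho(2-k))$ and $H^1_f(\bfQ,\rho^\vee(k-2)) = H^1_f(\bfQ,\rho(2-k))$ (using the self-duality of $\rho_f$ up to twist and the identification of Remark \ref{identification}) — this is where the hypotheses $\dim_{\bfF}H^1_f(\bfQ,\rho(2-k))=1$ and $p\nmid 1+w_{f,\ell}\ell$ (via Proposition \ref{Selmer groups 2}, which makes the relaxed and strict Selmer groups agree so the bound is usable) enter. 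Since both modules are then cyclic over $R^{\rm red}$, their product $I^{\rm tot}$ is generated by a single element (the product of the two generators), so $I^{\rm tot}$ is principal. The main obstacle I anticipate is in Parts (i)--(ii): unlike Bellaïche--Chenevier, we cannot invoke splitting of extensions at ramified primes $\ell\mid N$ (their Proposition 8.2.10 fails mod $p$), so the vanishing of the relevant local-at-$\ell$ obstructions has to be extracted instead from the minimality hypothesis — specifically from Corollary \ref{splitting at ell}, which gives $\ov\sigma|_{I_\ell}\cong 1\oplus\rho|_{I_\ell}\oplus 1$ — and it will take care to check that the GMA-level manipulations producing the extension classes respect the minimal (semi-abelian) deformation condition throughout, so that all the intermediate extensions that appear actually lie in the $H^1_f$'s rather than in larger $H^1_\Sigma$'s.
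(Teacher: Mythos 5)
Your overall strategy is the paper's: the Bella\"iche--Chenevier GMA bookkeeping, the factorization $A^T_{k-1,k-2}=A^T_{k-1,\rho}A^T_{\rho,k-2}$ forced by $H^1_f(\bfQ,\bfF(-1))=0$, the $\tau$-duality relation $A^T_{k-2,\rho}A^T_{\rho,k-2}=A^T_{k-1,\rho}A^T_{\rho,k-1}$, the Chasles relations, and cyclicity of the off-diagonal modules via one-dimensional Selmer groups. But the step you flag as ``the main obstacle'' is in fact the technical heart of the proof and is not carried out. The injection $\Hom_{R^{\rm red}}(A^{\sigma}_{i,j}/A'_{i,j},\bfF)\hookrightarrow H^1(\bfQ,\Hom(\rho_j,\rho_i))$ comes from the abstract theory (BC Theorems 1.5.5--1.5.6), but landing in the right Selmer group requires (a) showing the extensions cut out of the GMA are short crystalline, which is done by exhibiting them as quotients of the modules $M_j=SE_j$ and embedding those in the generic fibre, and (b), for the character-versus-character entry, showing the extension is unramified at every $\ell\mid N$. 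Point (b) cannot be read off from Corollary \ref{splitting at ell}, which only concerns the residual representation: in the paper it is a genuine matrix computation with $\sigma^{\rm red}(X)-I_4$ for $X$ generating inertia, in which one shows the fourth row is an $R^{\rm red}$-multiple of the second and deduces that the inertial image of the relevant projection $a_{\chi 1}$ lands in $\fm^{r+s}\subseteq A^{\sigma}_{\chi,\rho}A^{\sigma}_{\rho,1}=A'_{\chi,1}$, which is exactly what makes the class unramified. Without this, Nakayama cannot be applied and neither Lemma-level factorization nor the cyclicity argument goes through.

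Separately, in part (iii) your identification $H^1_f(\bfQ,\rho^{\vee}(k-2))=H^1_f(\bfQ,\rho(2-k))$ is false: since $\det\rho=\chi^{2k-3}$ one has $\rho^{\vee}(k-2)\cong\rho(1-k)$, a different twist --- distinguishing these two twists is precisely why the ordering of the Jordan--H\"older factors matters. Consequently the hypothesis $\dim_{\bfF}H^1_f(\bfQ,\rho(2-k))=1$, combined with Proposition \ref{Selmer groups 2} to pass from $H^1_{\Sigma}$ to $H^1_f$, controls only $A^T_{\rho,k-2}$, not $A^T_{k-2,\rho}$; for the latter no such Selmer bound is available under the stated hypotheses. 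This is harmless, because --- as you yourself note in part (ii) --- the non-splitness of the extension $a$ already forces $A^{\sigma}_{k-2,\rho}=R^{\rm red}$, hence $A^T_{k-2,\rho}=R^{\rm red}$, and then $I^{\rm tot}=A^T_{\rho,k-2}$ is principal because that single module is cyclic. So replace the symmetric ``pairing of two Selmer groups'' picture by this asymmetric argument.
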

\begin{proof} We will prove Theorem \ref{4} by a sequence of Lemmas some pertaining to $A_{i,j}^T$ and some to $A_{i,j}^{\sigma}$. 
\begin{lemma}\label{8.2.16} One has 
\be \label{invo}A^T_{k-2,\rho} A^T_{\rho,k-2} = A^T_{k-1, \rho}A^T_{\rho, k-1}.\ee \end{lemma}
\begin{proof} This is proved exactly as
 Lemma 8.2.16 of \cite{BellaicheChenevierbook} using the antiinvolution $\tau$. \end{proof}
We will need the following analogue of \cite{BellaicheChenevierbook}, Lemma 8.3.1: 
\begin{lemma} \label{char} We have 
$$A^{\sigma}_{k-1,k-2} = A^{\sigma}_{k-1, \rho} A^{\sigma}_{\rho, k-2}.$$ 
\end{lemma}
The strategy to prove Lemma \ref{char} follows that of \cite{BellaicheChenevierbook}, Lemma 8.3.1, but instead of using $\Ext_T$ we use Lemma \ref{injection1} below.
 For $\rho_i,\rho_j \in \{\chi^{k-1}, \chi^{k-2}, \rho\}$, $\rho_i\not\cong \rho_j$, set  $A'_{i,j}:= A^{\sigma}_{i, l}A^{\sigma}_{l, j}$, where $\rho_l \in \{\chi^{k-1}, \chi^{k-2}, \rho\}$, $\rho_i \not\cong \rho_l \not\cong \rho_j$. 
\begin{lemma} \label{injection1} There is an injection $$\Hom_{R^{\rm red}}(A^{\sigma}_{i,j}/A'_{i,j}, \bfF) \hookrightarrow H^1_{\Sigma}(\bfQ, \Hom(\rho_j,\rho_i)).$$  If either (i) or (ii) hold, where \begin{itemize}
\item[(i)] $\rho_i=\chi^{k-2}$ and $\rho_j=\chi^{k-1}$,
\item[(ii)] $\rho_i=\chi^{k-1}$ and $\rho_j=\chi^{k-2}$,
\end{itemize} then the image of the injection is contained in $H^1_f$. \end{lemma}
\begin{proof} For simplicity in this proof only we write $R$ for $R^{\rm red}$. By \cite{BellaicheChenevierbook}, Theorem 1.5.5, taking $J=\fm$ there is an injection
$$\iota_{i,j}: \Hom_R(A^{\sigma}_{i,j}/A'_{i,j}, \bfF) \hookrightarrow  \Ext^1_{R[G]/\fm R[G]}(\rho_j, \rho_i).$$ We have $$\fm R[G]=\ker (R[G]\to \bfF[G]) = (\fm R)[G],$$ hence $R[G]/\fm R[G] = (R/\fm R)[G]= \bfF[G].$ Thus we get an injection 
 $$\iota_{i,j}: \Hom_R(A^{\sigma}_{i,j}/A'_{i,j}, \bfF) \hookrightarrow H^1(\bfQ, \Hom(\rho_j,\rho_i)).$$ It remains to prove that the image is contained in the correct Selmer group. By \cite{BellaicheChenevierbook}, Theorem 1.5.6(1) the image consists precisely of the $S/\fm S$-extensions of $\rho_i$ by $\rho_j$, where $$S:= (R[G])/(\ker \sigma)(R[G]).$$ 
 By \cite{BellaicheChenevierbook}, Theorem 1.5.6(2) any $S/\fm S$-extension is a quotient of $M_j/\fm M_j \oplus \rho_i$, where  $M_j=SE_j$ and $E_j$ are defined as in \cite{BellaicheChenevierbook}, p. 21. 
Since $p > 2k-2$, the representations $\chi^{k-2}$, $\chi^{k-1}$ and $\rho$ are short crystalline.
Since the category of such representations is closed under taking subobjects, quotients and finite direct sums it suffices therefore to prove that $M_j$ is short crystalline.
By \cite{BellaicheChenevierbook}, section 1.5.4, one has $S=M_j \oplus S(1-E_j)$, hence in particular $M_j$ is an $S$-submodule, and hence also an $\Oo[G]$-submodule of $S$.
One has $M_j \subset M_j \otimes {\rm Tot}( R^{\rm red})$ and the latter Galois module is isomorphic to the representation $({\rm Tot}(R^{\rm red})^4, \sigma^{\rm red})$, which is short  crystalline. Thus $M_j$ is short crystalline.

Let us now check that if $\rho_i=\chi^{k-2}$ and $\rho_j=\chi^{k-1}$ (or vice versa), then the extensions in the image of $\iota_{i,j}$ are unramified away from $p$. Let $\ell \mid N$ be a prime. By Remark \ref{adaptness11}  we can conjugate $\sigma^{\rm red}$ so that it is adapted to the data of idempotents $\mE$. Abusing notation we will in this proof denote this conjugate still by $\sigma^{\rm red}$. 
This implies in particular that $e_{\rho} (\sigma\otimes \bfF) e_{\rho} \cong  \rho$  and that \be\label{adaptness22}\sigma^{\rm red}(S^{\sigma})=\bmat R  &M_{1, 2}(A^{\sigma}_{k-2, \rho}) & A^{\sigma}_{k-2, k-1}\\ M_{2,1}(A^{\sigma}_{\rho, k-2}) & M_{2,2}(R) & M_{2,1}(A^{\sigma}_{\rho, k-1}) \\ A^{\sigma}_{k-1, k-2} & M_{1,2}(A^{\sigma}_{k-1, \rho}) &R\emat.\ee  Let $X\in I_{\ell}$ be such that $\sigma^{\rm red}(X)$ topologically generates $\sigma^{\rm red}(I_{\ell})$. Write $\alpha_{k,l}$ for the $(k,l)$-entry of $Y:=\sigma^{\rm red}(X)-I_4$. Since $\rho$ is ramified at $\ell$ at least one of the  entries $\alpha_{2,2}$, $\alpha_{2,3}$, $\alpha_{3,2}$, $\alpha_{3,3}$  lies in $R^{\times}$. To fix attention let us assume that $\alpha_{2,3}\in R^{\times}$. The proof in the other three cases is identical. 

The construction of the extensions in the image of $\iota_{i,j}$ is given in \cite{BellaicheChenevierbook}, p. 37. 
Note that while there are in general three choices for $i$ (corresponding to $\rho_i=\chi^{k-2}, \chi^{k-1}$ or $\rho$), and the same holds for $j$, there are four choices for $k$ and for $l$. 

If $\rho_{i}=\rho_{j}$, let $a_{ii}: S/\fm S \to e_{\rho_{i}} (S/\fm S) e_{\rho_{i}}$ be the canonical projection defined in Lemma 1.5.4 of \cite{BellaicheChenevierbook}, where $e_{\rho_i}$ is the idempotent corresponding to $\rho_i$. (For notation and terminology see \cite{BellaicheChenevierbook}.) We note that $a_{ii}$ composed with the isomorphism $ e_{\rho_i} (S/\fm S) e_{\rho_i} \cong \bfF$ gives a representation isomorphic to $\rho_i$. If $\rho_i \neq \rho_j$, then the projection   $a_{ij}: S/\fm S \to e_{\rho_i}(S/\fm S) e_{\rho_{j}}$ is defined in an analogous way  and is the mod $\fm$-reduction of the canonical projection $\tilde{a}_{ij}: S \to e_{\rho_i} S e_{\rho_{j}}$ defined again in the same way. The codomain of $\tilde{a}_{ij}$ is isomorphic to $\mA^{\sigma}_{ij}$ (via, say, a map $\phi$) which then is isomorphic to $A^{\sigma}_{ij}$ via the map $f_{i,j}^{\sigma}$.  The composite $f^{\sigma}_{i,j} \circ \phi \circ \tilde{a}_{ij}$ equals $\epsilon_i \sigma^{\rm red} \epsilon_j$, where $$\epsilon_i = \begin{cases} \diag(1,0,0,0) & \textup{if $\rho_i=\chi^{k-2}$}\\ \diag(0,1,1,0) & \textup{if $\rho_i=\rho$}\\ \diag(0,0,0,1) & \textup{if $\rho_i=\chi^{k-1}$}.\end{cases}$$
Here we are only interested in the case when $\rho_i=\chi^{k-1}$ and $\rho_j=\chi^{k-2}$ or vice versa (but the proof in the second case is identical to the first, so we omit it). 

Let $f \in \Hom(A^{\sigma}_{\chi,1}/A'_{\chi,1}, \bfF)$.  Recall that we have $A'_{\chi, 1} = A^{\sigma}_{\chi, \rho}A^{\sigma}_{\rho,1}$. Since $\chi$ and $1$ are unramified to show that the extension $$\bmat a_{\chi\chi} & f \circ a_{\chi 1}\\ & a_{11}\emat$$ is unramified at $\ell$ it suffices to show that $a_{\chi 1}(I_{\ell}) \subset A'_{\chi,1}$. Note that $a_{\chi 1}(X)=\alpha_{4,1}$.  
Since $Y$ is a conjugate of $\bmat 0&0&0&0\\ 0&0&1&0\\ 0&0&0&0\\ 0&0&0&0\emat$ and $A^{\sigma}_{k-1, \rho}\subset R$ (by Theorem \ref{1.4.4} (ii)) while $\alpha_{2,3} \in R^{\times}$ we see that the fourth row of $Y$ is a scalar multiple of the second row and that this scalar $\alpha\in {\rm Tot}(R)$ in fact lies in $R$.   If $\alpha=0$, then $a_{\chi 1}(X)=\alpha_{4,1}=0$ and we are done, otherwise let $s$ be the largest integer such that $\alpha \in \fm^s$. Then $\alpha_{4,1}=\alpha \alpha_{2,1}$. If $\alpha_{2,1}=0$, we are done, otherwise let $r$ be the largest integer such that $\alpha_{2,1} \in \fm^r$. Then $\alpha_{4,1}=\alpha\alpha_{2,1} \in \fm^{r+s}$. Finally $\alpha_{4,3} = \alpha\alpha_{2,3} \in \fm^s- \fm^{s+1}$ since $\alpha_{2,3}$ is a unit. Comparing the matrix 
$$Y=\bmat *&*&*&*\\ \alpha_{2,1} &*&\alpha_{2.,3}&*\\ *&*&*&*\\ \alpha\alpha_{2,1}&*&\alpha\alpha_{2,3}&*\emat$$ with \eqref{adaptness22}     we see that by the definition of $r$ we get that $A^{\sigma}_{\rho,1} \supset \fm^r$ (since $A^{\sigma}_{\rho,1}$ is the ideal of $R$ generated by the entries of $\sigma$ falling in the corresponding spots and $\alpha_{2,1} \in \fm^r - \fm^{r+1}$). Similarly by the definition of $s$ and the fact that $\alpha_{4,3}  \in \fm^s- \fm^{s+1}$ we get that $A^{\sigma}_{\chi, \rho} \supset \fm^s$ and so, $A'_{\chi, 1} = A^{\sigma}_{\chi, \rho}A^{\sigma}_{\rho,1}\supset \fm^{r+s}$. Thus, since $\alpha_{4,1} \in \fm^{r+s}$ we get $\alpha_{4,1} \in A'_{\chi,1}$. 
 This completes the proof of the lemma. 
\end{proof}

\begin{proof}[Proof of Lemma \ref{char}] We have by Proposition \ref{class groups} that $H^1_{f}(\bfQ, \Hom(\chi^{k-2},\chi^{k-1}))=0$. Thus by Lemma \ref{injection1} and Nakayama's Lemma we get that $A^{\sigma}_{k-1, k-2}= A'_{k-1, k-2}$.  \end{proof}
\begin{lemma} \label{5.2.1} We have 
$$A^{T}_{k-1,k-2} = A^{T}_{k-1, \rho} A^{T}_{\rho, k-2}.$$ 
\end{lemma}

\begin{proof} This follows by applying $\phi_{k-1, k-2}$ to both sides of the equality in Lemma \ref{char} and noting that $\phi_{k-1, k-2}$ preserves multiplication after identifying the elements $\alpha \in A^{\sigma}_{k-1, \rho}$ and $\beta \in A^{\sigma}_{\rho, k-2}$ as matrices with non-zero entries in the correct spots, i.e., that one has $\phi_{k-1, k-2}(\alpha \beta) = \phi_{k-1, \rho}(\alpha) \phi_{\rho, k-2}(\beta)$.  \end{proof} 

We can now finish the proof of parts (i) and (ii) of Theorem \ref{4}. First we
apply the Chasles relation to see that  one has $A^T_{k-2, k-1} A^T_{k-1, \rho} \subset A^T_{k-2, \rho}$ (cf. Theorem \ref{1.4.4}). So, using Lemma \ref{char}  one gets $A^T_{k-2, k-1} A^T_{k-1, k-2} \subset A^T_{k-2, \rho} A^T_{\rho, k-2}$. This gives $$I^{\rm tot} = A^T_{k-2, k-1} A^T_{k-1, k-2} +  A^T_{k-2, \rho}A^T_{\rho,k-2} + A^T_{k-1, \rho}A^T_{\rho, k-1} = A^T_{k-2, \rho} A^T_{\rho, k-2}.$$
This proves (i) and (ii). To prove (iii) we begin with the following lemma.

\begin{lemma} \label{principality theorem}  One has  $A^{\sigma}_{k-2, \rho}=R^{\rm red}$ and $A^{\sigma}_{\rho,k-2}$ is a principal ideal of $R^{\rm red}$.  \end{lemma}
\begin{proof}  We have 
$$\sigma^{\rm red}(R^{\rm red}[G_{\Sigma}]) \cong \bmat R^{\rm red} & M_{1,2}(A^{\sigma}_{k-2, \rho}) & A^{\sigma}_{k-2, k-1}\\ M_{2,1}(A^{\sigma}_{\rho, k-2}) & M_{2,2}(R^{\rm red}) & M_{2,1}(A^{\sigma}_{\rho, k-1}) \\ A^{\sigma}_{k-1, k-2} & M_{1,2}(A^{\sigma}_{k-1, \rho}) & R^{\rm red} \emat.$$   Recall that $$\ov{\sigma}= \bmat \chi^{k-2} & a&b\\ & \rho & c \\ && \chi^{k-1}\emat,$$ with $a$ and $c$ both non-trivial classes. Thus a conjugate of $\sigma^{\rm red}$ which is adapted to the data of idempotents $\mE$ will have the corresponding properties. More precisely, $\sigma^{\rm red} \otimes \bfF$ will have a subquotient which is a non-split extension of $\rho$ by $\chi^{k-2}$ and one which is a non-split extension of $\chi^{k-1}$ by $\rho$. Thus we get that $A^{\sigma}_{k-2, \rho} = A^{\sigma}_{\rho, k-1}=R^{\rm red}$. 
Then by Chasles relations we get that also $A^{\sigma}_{k-2, k-1}=R^{\rm red}$. Finally, by Theorem \ref{1.4.4}(ii) we have that $A^{\sigma}_{ij} A^{\sigma}_{ji} \subset \fm$ for each pair $i \neq j$. Thus we get that each of $A^{\sigma}_{k-1, k-2}$, $A^{\sigma}_{k-1, \rho}$ and $A^{\sigma}_{\rho, k-2}$ is contained in $\fm$. 

By Lemma \ref{char} we have $A^{\sigma}_{k-1, k-2} = A^{\sigma}_{k-1, \rho}A^{\sigma}_{\rho, k-2} \subset \fm A^{\sigma}_{\rho, k-2}$.   This gives us  $R^{\rm red}$-module maps (note that $A^{\sigma}_{\rho, k-1}=R^{\rm red}$) $$\frac{A^{\sigma}_{\rho, k-2}}{A^{\sigma}_{\rho, k-1}A^{\sigma}_{k-1, k-2}} \twoheadrightarrow \frac{A^{\sigma}_{\rho, k-2}}{\fm A^{\sigma}_{\rho, k-2}} \implies \Hom_{R^{\rm red}}\left(\frac{A^{\sigma}_{\rho, k-2}}{\fm A^{\sigma}_{\rho, k-2}}, \bfF\right) \hookrightarrow \Hom_{R^{\rm red}}\left(\frac{A^{\sigma}_{\rho, k-2}}{A^{\sigma}_{\rho, k-1}A^{\sigma}_{k-1, k-2}}, \bfF\right).$$

We have (the first isomorphism following from 
our assumptions in part (iii) of Theorem \ref{4} and from Propositions \ref{torsion coeffs}(1) and \ref{Selmer groups 2} while the first injection following from Lemma \ref{injection1}) \begin{multline}\bfF\cong H^1_{\Sigma}(\bfQ, \Hom(\chi^{k-2}, \rho))\hookleftarrow\Hom_{R^{\rm red}}(A^{\sigma}_{\rho, k-2}/A'_{\rho, k-2}, \bfF) =\\ =\Hom_{R^{\rm red}}\left(\frac{A^{\sigma}_{\rho, k-2}}{A^{\sigma}_{\rho, k-1}A^{\sigma}_{k-1, k-2}}, \bfF\right) \hookleftarrow \Hom_{R^{\rm red}}\left(\frac{A^{\sigma}_{\rho, k-2}}{\fm A^{\sigma}_{\rho, k-2}}, \bfF\right),\end{multline} so, the (clearly non-trivial) $\bfF$-vector space $\frac{A^{\sigma}_{\rho, k-2}}{\fm A^{\sigma}_{\rho, k-2}}$ is one-dimensional. Thus, $A^{\sigma}_{\rho, k-2}$ is generated over $R$ by one element by Nakayama's lemma. 
\end{proof} 

\begin{lemma} \label{principality theorem 2}  One has  $A^{T}_{k-2, \rho}=R^{\rm red}$ and $A^{T}_{\rho,k-2}$ is a principal ideal of $R^{\rm red}$.  \end{lemma}

\begin{proof} As explained in the proof of Lemma \ref{5.2.1} this follows by applying $\phi_{k-2, \rho}$ (resp. $\phi_{\rho, k-2}$) to $A^{\sigma}_{k-2, \rho}$ (resp. $A^{\sigma}_{\rho,k-2}$) and using Lemma \ref{principality theorem} along with the fact that both $\phi$ maps are $R$-linear. \end{proof}

Lemma \ref{principality theorem 2} along with part (ii) of the Theorem imply part (iii). \end{proof}

\section{Sufficient conditions for $R^{\rm red}$ to be a discrete valuation ring} \label{Cyclicity}

Let $\rho: G_{\Sigma} \to \GL_2(\bfF)$ be a continuous absolutely irreducible and odd Galois representation of determinant $\chi^{2k-3}$, short crystalline at $p$ and such that for every prime $\ell\in\Sigma \backslash \{p\}$  one has $\rho|_{I_{\ell}} \cong \bmat 1&*\\ 0 & 1\emat\neq I_2$. 
Let $R_{\rho}$ be the universal deformation ring for deformations $\rho': G_{\Sigma} \to \GL_2(A)$ (here $A$ is an object of $\textup{LCN}(E)$) of $\rho$ which have determinant equal to $\epsilon^{2k-3}$,  are short crystalline at $p$ and minimal in the  sense that they satisfy  \be \label{min for rho} \rho'|_{I_{\ell}} \cong \bmat 1 &* \\ 0&1\emat.\ee 
We will assume that $R_{\rho} \cong \Oo$, i.e., is a discrete valuation ring (cf. \cite{BergerKlosin13}, section 6.1 for discussion of this assumption as well as section \ref{Examples} in the current paper). This implies that the set of strict equivalence classes of short crystalline, minimal deformations of $\rho$  to $\GL_2(\Oo)$ with determinant equal to $\epsilon^{2k-3}$ contains a single element. We will write  $\tilde{\rho}$ for a fixed representative of this strict equivalence class.

Consider short crystalline, semi-abelian $\ov{\sigma} : G_{\Sigma} \to \GL_4(\bfF)$ of the form $$\ov{\sigma}=\bmat \chi^{k-2} & a & b\\ & \rho & c \\ && \chi^{k-1}\emat$$ with $a,c$ giving rise to non-zero elements of $H^1_{f}(\bfQ, \rho(1-k))$ (cf. Corollary \ref{splitting at ell}).

 We assume in this section that $$ \dim_{\bfF}H^1_f(\bfQ, \rho(1-k)) =1.$$ 
For $A$ an object of ${\rm LCN}(E)$ we will call a matrix $M\in \GL_4(A)$ upper-triangular if  $$M=\bmat *&*&*&*\\ 0&*&*&*\\ 0&*&*&*\\ 0&0&0&*\emat.$$ We will say that a homomorphism $\sigma': G_{\Sigma} \to \GL_4(A)$ is upper-triangular if for all $g \in G_{\Sigma}$, the matrix $\sigma'(g) \in \GL_4(A)$ is upper-triangular. Finally, we say that a deformation $\sigma : G_{\Sigma} \to \GL_4(A)$ of $\ov{\sigma}$ is upper-triangular if there exists a member $\sigma'$ of the strict equivalence class of $\sigma$ which is upper-triangular. 
\begin{lemma} \label{5} Every short crystalline minimal  deformation $\sigma$ of $\ov{\sigma}$ to $\GL_4(A)$ such that $\tr \sigma = T_1 + T_2 +T_3$ with $T_1, T_2, T_3$ pseudocharacters such that $T_1 \equiv \chi^{k-2}$, $T_2 \equiv \tr \rho$, $T_3 \equiv \chi^{k-1}$ mod $\fm_A$  is upper-triangular. \end{lemma}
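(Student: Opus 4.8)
The plan is to use the standard dictionary between a deformation whose trace decomposes as a sum of pseudocharacters and a reducible (in fact block-triangular) model, making use of the fact that the diagonal pieces are absolutely irreducible and pairwise non-isomorphic. First I would invoke the hypothesis that $\tr\sigma = T_1 + T_2 + T_3$ with $T_1 \equiv \chi^{k-2}$, $T_2 \equiv \tr\rho$, $T_3 \equiv \chi^{k-1} \pmod{\fm_A}$. Since $\chi^{k-2}$, $\rho$, $\chi^{k-1}$ are pairwise non-isomorphic and $\rho$ is absolutely irreducible, the theory of Cayley-Hamilton quotients and generalized matrix algebras (Bella\"iche-Chenevier, \cite{BellaicheChenevierbook} Section 1, especially Theorems 1.4.4 and 1.5.5, together with the residual multiplicity-one structure) gives a decomposition of the associated algebra into a GMA with diagonal blocks $A$, $M_2(A)$, $A$, and one obtains that $\sigma$ itself is conjugate (over $A$, not merely over $\mathrm{Tot}(A)$, since $\ov\sigma$ is already block-triangular of the prescribed shape and the data of idempotents can be chosen lifting the residual ones) to a representation valued in the block-upper-triangular matrices
$$
\bmat A & M_{1,2}(A) & A \\ 0 & M_{2,2}(A) & M_{2,1}(A) \\ 0 & 0 & A \emat .
$$
This already gives that $\sigma$ is strictly equivalent to a homomorphism landing in block-upper-triangular form with blocks of sizes $1,2,1$.

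The second step is to upgrade "block-upper-triangular of type $(1,2,1)$" to "upper-triangular" in the precise matrix shape displayed in the lemma, i.e. with the middle $2\times 2$ block allowed to be full but everything below the block diagonal zero. In fact the displayed matrix shape $\bsmat *&*&*&*\\ 0&*&*&*\\ 0&*&*&*\\ 0&0&0&* \esmat$ \emph{is} exactly "block-upper-triangular of type $(1,2,1)$", so once the GMA argument places $\sigma$ in that form there is nothing further to prove about the shape: a member $\sigma'$ of the strict equivalence class of $\sigma$ has $\sigma'(g)$ of that shape for every $g\in G_\Sigma$, which is the definition of upper-triangular given just before the lemma. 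I would make explicit that the conjugating matrix is a genuine element of $\GL_4(A)$: this is where the residual hypothesis enters, since $\ov\sigma$ is of the stated block-triangular shape with $a,c$ the prescribed classes, so the residual idempotents $\ov e_i$ lift to idempotents $e_i \in M_4(A)$ (the ring $A$ being local) realizing $\mathcal E$, and conjugating $\sigma$ so as to be adapted to $\mathcal E$ is exactly conjugation by an element of $\GL_4(A)$ reducing to $I_4$ modulo $\fm_A$.

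The main obstacle, and the only genuinely delicate point, is justifying that the reducibility forced by $\tr\sigma = \sum T_i$ can be realized \emph{integrally} — i.e. that $\sigma$ is conjugate over $A$ (not just over the total ring of fractions) to the block-triangular shape above, in the right order $(\chi^{k-2}, \rho, \chi^{k-1})$ and with zeros in the correct places below the diagonal. This is handled by the GMA formalism of \cite{BellaicheChenevierbook}, Section 1.4: one picks idempotents $e_1, e_\rho, e_3 \in R^{\mathrm{red}}[G]/\ker\sigma$ (or rather their images in $M_4(A)$ after base change along $R^{\mathrm{red}}\to A$) that lift the evident residual idempotents, uses that $\sigma$ is a Cayley-Hamilton representation together with the hypothesis on $\tr\sigma$ to see that the off-diagonal modules $A_{j,i}$ for $i$ "above" $j$ in the ordering must lie in $\fm_A$ and, crucially, that the strictly-lower ones ($A_{\rho, 1}A_{1,\rho} \subset$ reducibility ideal, etc.) are forced to vanish because the corresponding extension groups / the structure of the pseudocharacter decomposition prohibit a nonzero map in that direction — precisely the same mechanism as in Lemma \ref{aux1} and Theorem \ref{centralizer}, now applied to $\sigma$ over $A$ rather than to $\ov\sigma$ over $\bfF$. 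Once this is in place, everything else is bookkeeping; I expect the write-up to consist mainly of citing Theorem \ref{1.4.4} (applied with $R^{\mathrm{red}}$ replaced by $A$ via the classifying map $R \to A$, equivalently applied directly to the Cayley-Hamilton algebra $A[G_\Sigma]/\ker\sigma$) and then reading off the matrix shape.
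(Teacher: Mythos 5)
The paper disposes of this lemma in one line, by citing Theorem 1.1 of \cite{Brown08} (with the remark that the Artinian hypothesis there is unnecessary) together with the remarks to Theorem 1 of \cite{Urban99}; you instead reconstruct the underlying argument via the generalized-matrix-algebra formalism of \cite{BellaicheChenevierbook}. That is a legitimate alternative route, and most of your outline is sound: residual multiplicity-one lets you lift the three orthogonal idempotents to $\sigma(A[G_\Sigma])\subset M_4(A)$, conjugate $\sigma$ by an element of $\GL_4(A)$ congruent to $I_4$ modulo $\fm_A$ so that it is adapted to them, and identify the middle block with $M_2(A)$; and the type $(1,2,1)$ block shape is indeed exactly the ``upper-triangular'' shape of the lemma, so no further reduction is needed once the block form is established.

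However, the one genuinely delicate step --- why the three below-diagonal modules $A_{\rho,k-2}$, $A_{k-1,\rho}$, $A_{k-1,k-2}$ vanish --- is justified incorrectly. You assert that they vanish because ``the corresponding extension groups \dots prohibit a nonzero map in that direction,'' invoking the mechanism of Lemma \ref{aux1} and Theorem \ref{centralizer}. But the downward extension groups do not vanish: for instance $H^1(\bfQ,\Hom(\chi^{k-2},\rho))=H^1(\bfQ,\rho(2-k))$ is nonzero (the paper elsewhere assumes it is one-dimensional, not trivial), and the no-common-Jordan-H\"older-factor argument of Lemma \ref{aux1} only yields $A_{j,i}\subset\fm_A$, never $A_{j,i}=0$. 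The correct mechanism --- which is exactly Urban's and Brown's --- combines two facts: (i) the hypothesis $\tr \sigma=T_1+T_2+T_3$ over $A$ itself says the total reducibility ideal of $(A,\tr\sigma)$ is zero, so $A_{i,j}A_{j,i}=0$ for all $i\neq j$; and (ii) the residual non-splitness of the extensions $a$ and $c$ in $\ov{\sigma}$ forces $A_{k-2,\rho}=A_{\rho,k-1}=A$ (some generating entry is a unit), hence also $A_{k-2,k-1}\supset A_{k-2,\rho}A_{\rho,k-1}=A$. Then $A_{\rho,k-2}=A_{\rho,k-2}A_{k-2,\rho}=0$, and likewise for the other two modules. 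Point (ii) is where the specific shape of $\ov{\sigma}$, with $a,c\neq 0$, enters; without it the conclusion is simply false (a split $\ov{\sigma}$ admits non-triangular deformations with decomposable trace), so this input must be made explicit rather than attributed to vanishing of extension groups.
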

\begin{proof} This is Theorem 1.1 in \cite{Brown08}. Note that the Artinian assumption in \cite{Brown08} is unnecessary. See also remarks to Theorem 1 in \cite{Urban99}. \end{proof}
\begin{lemma} \label{diag pieces} Consider an upper-triangular deformation of $\ov{\sigma}$ to $\GL_4(\bfF[X]/X^2)$. Then every upper-triangular member $\sigma'$ of its strict equivalence class has the form $$\sigma' = \bmat \chi^{k-2} & * & *\\ & \rho & * \\ && \chi^{k-1}\emat.$$  \end{lemma}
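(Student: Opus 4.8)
The plan is to write a given upper-triangular member $\sigma'$ in block form for the partition $(1,2,1)$ as $\sigma'=\bsmat \alpha & * & * \\ & M & * \\ && \delta\esmat$, where $\alpha,\delta\colon G_{\Sigma}\to(\bfF[X]/X^2)^{\times}$ and $M\colon G_{\Sigma}\to\GL_2(\bfF[X]/X^2)$ are honest homomorphisms (the $(1,2,1)$-block-upper-triangular matrices form a group on which projection onto a diagonal block is multiplicative) reducing mod $X$ to $\chi^{k-2},\chi^{k-1},\rho$. Writing $\alpha=\chi^{k-2}(1+X\phi_{\alpha})$, $\delta=\chi^{k-1}(1+X\phi_{\delta})$ with $\phi_{\alpha},\phi_{\delta}\in\Hom(G_{\Sigma},\bfF)=H^1(G_{\Sigma},\bfF)$ ($\bfF$ a trivial module, the adjoint action on a character being trivial), it suffices to prove $\phi_{\alpha}=\phi_{\delta}=0$ and that $M$ is a short crystalline, minimal deformation of $\rho$ of determinant $\epsilon^{2k-3}$. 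Since $R_{\rho}\cong\Oo$ this last point forces $M\cong\rho\otimes\bfF[X]/X^2$, and conjugating $\sigma'$ by $\diag(1,P,1)$ with $P\equiv I_2$ mod $X$ (a strict equivalence preserving the parabolic shape and the entries $\alpha,\delta$) then puts the middle block equal to $\rho$.

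The key local input is at $\ell\in\Sigma\setminus\{p\}$. By minimality $\sigma'|_{I_{\ell}}$ is $\GL_4(\bfF[X]/X^2)$-conjugate to $\exp(t_pN_1)=I+t_pN_1$ (recall $N_1^2=0$). Choose $x\in I_{\ell}$ with $\ov{\sigma}(x)\neq I$ (then $t_p(x)$ is a unit); after rescaling, $\sigma'(x)-I=u\,w^{t}$ with $u=Pe_2$ and $w^{t}=e_3^{t}P^{-1}$ for some $P\in\GL_4(\bfF[X]/X^2)$, so $w^{t}u=0$ and $\bar u,\bar w\neq 0$. Reducing mod $X$, $\bar u\bar w^{t}=\ov{\sigma}(x)-I$ has rank one; on the other hand, since $a$ and $c$ are unramified at $\ell$ by Corollary \ref{splitting at ell}, every column of $\ov{\sigma}(x)-I$ lies in $\langle e_2,e_3\rangle$ except possibly the last, which is $b(x)e_1$, and comparing with the rank-one condition forces $b(x)=0$. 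Hence the image $\langle\bar u\rangle$ of $\ov{\sigma}(x)-I$ lies in $\langle e_2,e_3\rangle$ and its kernel $\bar w^{\perp}$ contains $\langle e_1,e_4\rangle$, so $\bar u,\bar w\in\langle e_2,e_3\rangle$, i.e.\ $u_1,u_4,w_1,w_4\in X\bfF$. Therefore $\alpha(x)-1=u_1w_1\in X^{2}\bfF=0$ and likewise $\delta(x)=1$; as $\alpha|_{I_{\ell}},\delta|_{I_{\ell}}$ factor through the pro-cyclic pro-$p$ quotient of $I_{\ell}$ that is topologically generated by the image of $x$, this shows $\phi_{\alpha},\phi_{\delta}$ are unramified at $\ell$. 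Finally $M(x)-I$, being the middle block of $(uw^{t})^{2}=(w^{t}u)uw^{t}=0$, is square-zero with rank-one reduction, hence $\bfF[X]/X^2$-conjugate to a strictly upper-triangular matrix, so $M|_{I_{\ell}}\cong\bsmat 1&*\\0&1\esmat$ and $M$ is minimal.

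Next I would go local at $p$ and globalize. Since $\sigma'|_{G_{\bfQ_p}}$ is short crystalline and the Fontaine--Laffaille essential image is closed under subquotients, $\alpha|_{G_{\bfQ_p}},\delta|_{G_{\bfQ_p}},M|_{G_{\bfQ_p}}$ are short crystalline; the first two place $\phi_{\alpha},\phi_{\delta}$ in $H^1_f(\bfQ_p,\bfF)=H^1_{\rm ur}(\bfQ_p,\bfF)$ (these coincide for the trivial module, both being one-dimensional). Combined with the previous paragraph, $\phi_{\alpha},\phi_{\delta}\in H^1_f(\bfQ,\bfF)$, which is $0$ because such a class is unramified everywhere and $\bfQ$ has no nontrivial unramified extension. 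Thus $\alpha=\chi^{k-2}$, $\delta=\chi^{k-1}$, whence $\det M=\epsilon^{4k-6}(\alpha\delta)^{-1}=\epsilon^{4k-6}\chi^{3-2k}=\chi^{2k-3}=\epsilon^{2k-3}$ in $\bfF[X]/X^2$. So $M$ satisfies all the conditions cut out by $R_{\rho}$, and the argument concludes as in the first paragraph.

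The step I expect to be the real obstacle is deducing, from minimality alone, that the corner characters are \emph{unramified} at the primes $\ell\mid N$ — this is exactly what places $\phi_{\alpha},\phi_{\delta}$ in the trivial group $H^1_f(\bfQ,\bfF)$. It forces one to exploit the precise rank-one, square-zero shape of $\sigma'(x)-I$ over $\bfF[X]/X^2$ together with the local splitting of $a$ and $c$ (Corollary \ref{splitting at ell}), and to check that the final block-diagonal adjustment of the middle block is a genuine strict equivalence. The remaining ingredients — the vanishing $H^1_f(\bfQ,\bfF)=0$, the determinant bookkeeping over $\bfF[X]/X^2$, and the appeal to $R_{\rho}\cong\Oo$ — are routine.
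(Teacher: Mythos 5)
Your proof follows essentially the same route as the paper's: show the two corner characters give classes in $H^1(G_\Sigma,\bfF)$ that are unramified at $\ell\mid N$ by minimality, unramified at $p$ by crystallinity, hence zero; and dispose of the middle block via $R_\rho\cong\Oo$. You supply considerably more detail than the paper at both steps (the $uw^t$ decomposition of $\sigma'(x)-I$, and the verification that $M$ is short crystalline, minimal, of determinant $\epsilon^{2k-3}$, so that $R_\rho$ actually applies), and that detail is sound.

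One step is misattributed and, as written, does not follow. You deduce that every column of $\ov{\sigma}(x)-I$ lies in $\langle e_2,e_3\rangle$ "since $a$ and $c$ are unramified at $\ell$ by Corollary \ref{splitting at ell}". That corollary only says the extension \emph{classes} split on $I_\ell$; it does not say the cocycles $a,c$ vanish at $x$ in the chosen basis, and without that your vectors $\bar u,\bar w$ need not lie in $\langle e_2,e_3\rangle$ (e.g.\ $\bar u_1$ is proportional to a component of $a(x)$). The correct input is Lemma \ref{form res}: after conjugating by a block-diagonal unipotent matrix $\diag(A,D)$ — which preserves the $(1,2,1)$ upper-triangular shape and the diagonal blocks $\alpha,M,\delta$ of any lift — one may assume $\ov{\sigma}(x)-I_4$ has only its $(2,3)$-entry nonzero, i.e.\ $a(x)=b(x)=c(x)=0$. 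With that normalization (done separately for each $\ell$, which is harmless since the conclusion $\phi_\alpha(x)=\phi_\delta(x)=0$ is invariant under such conjugation) your rank-one argument goes through exactly as you wrote it. This is a local repair, not a change of strategy.
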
 
\begin{proof}  We need to show that the only deformations of $\chi^{k-2}$, $\rho$, $\chi^{k-1}$ are the trivial ones. Suppose that $\chi^{k-2} + \alpha X$ is a deformation of $\chi^{k-2}$ to $(\bfF[X]/X^2)^{\times}$. Then since $\chi^{2-k}$ is also   crystalline, we get that $1+\chi^{2-k}\alpha X$ is  crystalline. Then $\alpha':= \chi^{2-k} \alpha$ is a homomorphism from $G_{\Sigma}$ to the additive group $\bfF$. 
Let $\ell \mid N$ be a prime. Note that the total ring of fractions of $\bfF[X]/X^2$ is $\bfF[X]/X^2$. If $x \in G_{\Sigma}$ is such that $\sigma'(I_{\ell})$ is generated by $\sigma'(x)$ then by minimality we know that $\sigma'(x)-I_4$ is conjugate (over $\bfF[X]/X^2$) to $\bmat 0&0&0&0\\ 0&0&f&0\\0&0&0&0\\0&0&0&0\emat$ for $f \in \bfF^{\times}$.   Using the upper-triangular form of $\sigma'$ this implies that $\alpha$ must be unramified at $\ell$. Hence  the character $1 + \alpha' X: G_{\Sigma} \to (\bfF[X]/X^2)^{\times}$ can only be ramified at $p$. 
However, crystallinity forces the character to be trivial (by a simple modification of the proof of Lemma 9.6 in \cite{BergerKlosin13}),  so $\alpha=0$. Similarly one proves the only deformation of $\chi^{k-1}$ is the trivial one. Finally, the claim for $\rho$ follows from the assumption that $R_{\rho}$ is a dvr. \end{proof}
\begin{prop}\label{6} There do not exist any non-trivial upper-triangular deformations of $\ov{\sigma}$ to $\bfF[X]/X^2$. \end{prop}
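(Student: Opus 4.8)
The plan is to show that every upper-triangular deformation $\sigma$ of $\ov{\sigma}$ to $\bfF[X]/X^2$ is strictly equivalent to the trivial one $\ov{\sigma}\otimes\bfF[X]/X^2$. First I would invoke Lemma \ref{diag pieces} to replace $\sigma$ by an upper-triangular representative
$$\sigma=\bmat \chi^{k-2}&A&B\\ &\rho&C\\ &&\chi^{k-1}\emat,$$
so that the diagonal blocks are undeformed. Since $\sigma$ is short crystalline at $p$ and minimal, the upper-left $3\times3$ block $\bmat\chi^{k-2}&A\\&\rho\emat$ is a short crystalline, minimal deformation of $\bmat\chi^{k-2}&a\\&\rho\emat$ which, by Corollary \ref{splitting at ell}, splits on $I_\ell$ for every $\ell\mid N$; hence the class of the induced extension lies in $H^1_f(\bfQ,\Hom_\bfF(\rho,\chi^{k-2})\otimes\bfF[X]/X^2)\cong H^1_f(\bfQ,\rho(1-k))\otimes_\bfF\bfF[X]/X^2$, a free $\bfF[X]/X^2$-module of rank one generated by the constant class of $a$ (this is where the hypothesis $\dim_\bfF H^1_f(\bfQ,\rho(1-k))=1$ and $R_\rho$ being a dvr enter). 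So this class is a unit multiple of the class of $a$, and by conjugating $\sigma$ first by a matrix $\equiv I_4\pmod X$ supported on the $(1,2)$-block and then by a diagonal matrix $\equiv I_4\pmod X$ I would arrange $A=a$; the analogous argument for the lower-right $3\times3$ block (noting these further conjugations fix the $(1,2)$-block) lets me also assume $C=c$. Comparing $X$-coefficients in $\sigma(gh)=\sigma(g)\sigma(h)$ — the product $a(g)c(h)$ carries no $X$-term — then forces $B=b+Xb''$ with $b''\in Z^1(G_\Sigma,\Hom_\bfF(\chi^{k-1},\chi^{k-2}))$, and $\Hom_\bfF(\chi^{k-1},\chi^{k-2})\cong\bfF(-1)$.

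It then suffices to show $[b'']\in H^1(G_\Sigma,\bfF(-1))$ lies in $H^1_f(\bfQ,\bfF(-1))$: that group is zero by Proposition \ref{class groups}, so $b''$ is a coboundary and $\sigma$ is strictly equivalent to the trivial deformation. Writing $\sigma=(I_4+X\gamma)\ov{\sigma}$, the adjoint cocycle is $\gamma=b''E_{1,4}$, valued in the $G_\Sigma$-stable line $\bfF(-1)\cdot E_{1,4}\subset\ad\ov{\sigma}$ on which $G_\Sigma$ acts through $\chi^{-1}$. The condition at $p$ should follow directly from short crystallinity of $\sigma$: the class $\gamma=b''E_{1,4}$ factors through the short crystalline subobject $\bfF(-1)\cdot E_{1,4}$, and the finite/singular structure it picks up there is the usual crystalline one by exactness of the Fontaine–Laffaille functor and the compatibility of these structures with subobjects (cf.\ Remark \ref{DFG22}), so $[b'']\in H^1_f(\bfQ_p,\bfF(-1))$.

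The main point — and the step I expect to be technically most delicate — is to check that $[b'']$ is unramified at each $\ell\mid N$. Fix such an $\ell$ and pick $x\in I_\ell$ whose image generates the procyclic group $\sigma(I_\ell)$. The conjugation furnished by Lemma \ref{form res} is block-upper-triangular with identity middle block, so it preserves both the displayed shape of $\ov{\sigma}$ and, up to a nonzero scalar, the form $\gamma=b''E_{1,4}$; after applying it we may assume $\ov{\sigma}(x)=I_4+\alpha E_{2,3}$ with $\alpha\in\bfF^\times$ (so in particular $a(x)=b(x)=c(x)=0$), whence $\sigma(x)=I_4+\alpha E_{2,3}+Xb''(x)E_{1,4}$. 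By minimality $\sigma(x)$ is $\GL_4(\bfF[X]/X^2)$-conjugate to $\exp(t_pN_1)=I_4+tE_{2,3}$ with $t$ reducing to $\alpha$, hence a unit; comparing the $\bfF[X]/X^2$-module $\image(\sigma(x)-I_4)=(\bfF[X]/X^2)e_2+\bfF\cdot Xb''(x)e_1$ (which has nontrivial $X$-torsion as soon as $b''(x)\neq0$) with the $X$-torsion-free module $\image(tE_{2,3})=(\bfF[X]/X^2)e_2$ forces $b''(x)=0$. Because $\bfF(-1)$ is unramified at $\ell$, $b''|_{I_\ell}$ is a homomorphism; writing $\sigma(y)=\sigma(x)^m$ for arbitrary $y\in I_\ell$ gives $\sigma(yx^{-m})=I_4$, hence $b''(yx^{-m})=0$ and $b''(y)=m\,b''(x)=0$, so $b''|_{I_\ell}=0$. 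As this holds for every $\ell\mid N$ and $b''$ is unramified outside $\Sigma$, we conclude $[b'']\in H^1_f(\bfQ,\bfF(-1))=0$, which finishes the argument.
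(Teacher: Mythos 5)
Your overall strategy is the paper's: normalize the diagonal blocks via Lemma \ref{diag pieces}, use $\dim_{\bfF} H^1_f(\bfQ,\rho(1-k))=1$ to rigidify the $(1,2)$- and $(2,3)$-entries, reduce the remaining freedom to a class in $H^1(G_{\Sigma},\bfF(-1))$, and kill it with Proposition \ref{class groups}. Your reorganization (working with the $G_{\Sigma}$-stable line $\bfF\cdot E_{1,4}\subset\ad\ov{\sigma}$ instead of the paper's $8$-dimensional $\bfF$-avatar in the basis $\mB$, and comparing the lengths of $\image(\sigma(x)-I_4)$ and $\image(tE_{2,3})$ as $\bfF[X]/X^2$-modules in place of the row-proportionality argument) is sound and arguably cleaner. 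One caution on the condition at $p$: if you literally invoke Fontaine--Laffaille for $\ad\ov{\sigma}$ you run into the weight range (its weights spread over an interval of length $2(2k-3)$, which $p>2k-2$ does not control); the paper sidesteps this by extracting the honest $2$-dimensional subquotient $\bmat \chi^{k-2} & b'' \\ & \chi^{k-1}\emat$, whose weights $k-2,k-1$ are admissible, and your crystallinity claim for $[b'']$ should be phrased that way.

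The genuine gap is in your first normalization step. You justify that the extension class of the deformed block $\bmat \chi^{k-2} & A \\ & \rho \emat$ lies in $H^1_f$ --- in particular that it is unramified at each $\ell\mid N$ --- by citing Corollary \ref{splitting at ell}. That corollary is a statement about the residual representation $\ov{\sigma}$ (it follows from the semi-abelian hypothesis \eqref{admissible} on $\ov{\sigma}$) and says nothing about the $X$-part $a'$ of $A=a+Xa'$. What is needed is that \emph{minimality of the deformation} $\sigma$ forces the classes of $a'$ and $c'$ to split on $I_{\ell}$; this is exactly the rank-one/length computation you carry out for the $(1,4)$-entry, and the paper runs it for all off-diagonal entries at once (reading off $a_1(x)=c_2(x)=0$ from the shape of $\varrho(x)-I_4$, then removing $a_2(x)$, $c_1(x)$ by a conjugation $E_0\equiv I_4 \pmod{X}$, which only alters the cocycles by coboundaries). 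Without this input, the identification of $[A]$ as an element of the rank-one module $H^1_f(\bfQ,\rho(1-k))\otimes_{\bfF}\bfF[X]/X^2$ is unjustified, and the normalizations $A=a$, $C=c$ --- on which everything afterwards depends --- collapse. The fix is routine given what you already do for $b''$, but as written the step does not follow from the cited result.
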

\begin{proof} 
First note that $\bmat \chi^{k-2} & a\\ & \rho\emat$ is a subrepresentation of $\ov{\sigma}$  and that $\bmat \rho & c \\ & \chi^{k-1}\emat$ is a quotient of $\ov{\sigma}$ hence they are both short crystalline since $\ov{\sigma}$ is. Thus  (by Corollary \ref{splitting at ell}) 
 $a$ gives rise to a (non-zero) element of  $H^1_{f}(\bfQ, \Hom(\rho, \chi^{k-2}))=H^1_f(\bfQ, \rho(1-k))$  and $c$ gives rise to a (non-zero) element of  $H^1_{f}(\bfQ, \Hom(\chi^{k-1}, \rho))=H^1_f(\bfQ, \rho(1-k))$ (cf. Remark \ref{identification}). Since this last Selmer group is assumed to be one-dimensional, $a$ and $c$ give rise to linearly dependent cohomology classes.

By Lemma \ref{diag pieces} any upper-triangular deformation $\varrho$ of $\ov{\sigma}$ to $\GL_4(\bfF[X]/X^2)$ has the form $$\varrho=\bmat \chi^{k-2} & a+Xa' &b+ Xb'\\ & \rho & c+Xc' \\ && \chi^{k-1}\emat$$ for $a',b',c'$ valued in $\bfF$. 

Let us now show that all the off-block-diagonal entries are unramified outside $p$. Suppose $\ell \mid N$. By Corollary \ref{splitting at ell} we see that $a,b,c$ are all unramified at $\ell$. 
 Write $a'=\bmat a_1 & a_2\emat$, $c'=\bmat c_1 \\ c_2\emat$ with $a_i, c_i : G_{\Sigma} \to \bfF$. Let $x \in G_{\Sigma}$ be such that $\varrho(x)$ generates $\varrho(I_{\ell})$. After conjugating $\rho$ (if necessary), we may assume that $\rho(x) = \bmat 1&1\\ & 1 \emat$.  As the (2,3)-entry of $\varrho(x)-I_4$ lies in $\bfF^{\times}$ minimality of $\varrho$ forces the first row of $\varrho(x)-I_4$ to be a scalar multiple of the second row and the fourth column of $\varrho(x)-I_4$ to be a scalar multiple of the third column. This gives $a_1(x)=c_2(x)=0$. Set $E_0:= \bmat A \\ &D \emat$ with $A=\bmat 1&-a_2(x)X \\ & 1\emat$ and $D=\bmat 1 & c_1(x)X \\ & 1 \emat$. 
 One has $E_0 (\varrho(x)-I_4)E_0^{-1} = \bmat 0&0&0&b'(x)X\\ 0&0&*&0\\ 0&0&0&0\\0&0&0&0\emat.$ Note that $E_0 \equiv I_4$ (mod $\varpi$),  so the conjugate $E_0 \varrho E_0^{-1}$ is strictly equivalent to $\varrho$.  Hence we may assume that $a_2(x)=c_1(x)=0$. Since again the first row must be a scalar multiple of the second row we also get $b'(x)=0$.

Note that $\bmat \chi^{k-2} & a+Xa' \\ & \rho\emat$ is a subrepresentation of $\varrho$ (with $a+Xa'$ unramified away from $p$). Then using that $\dim_{\bfF}H^1_{f}(\bfQ, \rho(1-k))=1$ we obtain by Proposition 7.2 of \cite{BergerKlosin13} (where the assumption 6.1(ii) is satisfied for $\rho$ and for $\chi^{k-2}$ it can be replaced by the lack of a non-trivial deformation of $\chi^{k-2}$ to $(\bfF[X]/X^2)^{\times}$ - cf. Proof of Lemma \ref{diag pieces} above) that $\bmat \chi^{k-2} & a+Xa' \\ & \rho\emat$ can be conjugated (over $\bfF[X]/X^2$) to the representation $\bmat \chi^{k-2}&a \\ & \rho\emat$, i.e., there exists $A \in I_3 + XM_3(\bfF[X]/X^2)$  such that $A\bmat \chi^{k-2} & a+Xa' \\ & \rho\emat A^{-1} = \bmat \chi^{k-2} & a \\ & \rho\emat$. Then we have $$\bmat A&0\\ 0 & 1 \emat \varrho \bmat A &0\\0&1\emat^{-1} = \bmat \chi^{k-2} & a & b'' + Xb''' \\ & \rho & c''+Xc''' \\ && \chi^{k-1} \emat.$$ Since the image of $A$ in $M_3(\bfF)$ is the identity,  we see that $b''=b$ and $c''=c$, so in particular this is still a deformation of $\ov{\sigma}$. Hence we conclude that $\varrho$ is equivalent to $\bmat \chi^{k-2} & a& b + Xb' \\ & \rho & c+Xc' \\ && \chi^{k-1} \emat$
which we will still denote by $\varrho$.

Let $V$ be the representation space of $\varrho$ (i.e., the free $\bfF[X]/X^2$-module of rank 4 on which $G_{\Sigma}$ acts via $\varrho$). Let $W$ be the $G_{\Sigma}$-invariant free $\bfF[X]/X^2$-submodule of $V$ spanned by the vector ${}^t[1,0,0,0]$. Then $G_{\Sigma}$ acts on the quotient $V/W$ via $$\varrho':=\bmat \rho & c+Xc' \\ & \chi^{k-1}\emat.$$ Hence we can again conclude by Proposition 7.2 in \cite{BergerKlosin13} that there exists $A\in I_3+XM_3(\bfF[X]/X^2)$ such that  $A\varrho'A^{-1}=\bmat \rho & c \\ & \chi^{k-1}\emat$. Then we have $$\bmat 1 &0\\0&A\emat \varrho \bmat 1 &0\\0&A\emat^{-1} =\bmat \chi^{k-2} & a + Xa'' & b+ Xb''\\ & \rho & c \\ && \chi^{k-1} \emat. $$ Denote the right-hand side of the above equation by $\varrho'$. 
Now, let us re-write $\varrho'$  in the $\bfF$-basis of $(\bfF[X]/X^2)^4$ given by $$\mB=\left\{\bmat 1\\0\\0\\0\emat, \bmat 0\\1\\0\\0\emat, \bmat 0\\0\\1\\0\emat, \bmat 0\\0\\0\\1\emat, \bmat X\\0\\0\\0\emat, \bmat 0\\X\\0\\0\emat, \bmat 0\\0\\X\\0\emat, \bmat 0\\0\\0\\X\emat\right\}. $$ 
We get that $\varrho'$ in the basis $\mB$ has the form
$$\bmat \chi^{k-2} & a & b \\ & \rho & c \\ & & \chi^{k-1} \\ &a''&b'' & \chi^{k-2} & a & b\\ &&&&\rho & c \\ &&&&&\chi^{k-1} \emat.$$ Taking a sequence of submodules and quotients we get $$\bmat \chi^{k-2} & a & b \\ & \rho & c \\ & & \chi^{k-1} \\ &a''&b'' & \chi^{k-2} \emat \rightarrow \bmat  \rho & c \\   & \chi^{k-1} \\ a''&b'' & \chi^{k-2} \emat, $$ which if we conjugate it by $\bmat I_2&0&0\\ 0&0&1 \\ 0&1&0 \emat$ will give us \be \label{blue qt} \bmat \rho&&c \\ a'' & \chi^{k-2} & b'' \\ && \chi^{k-1}\emat,\ee from which we can extract a short crystalline 2-dimensional subrepresentation of the form $$\bmat \rho \\ a'' & \chi^{k-2}\emat,$$ and again (by crystallinity of the above representation) we see that $a''$
(which is unramified away from $p$ by an argument as before) 
gives rise to an element of $H^1_{f}(\bfQ, \Hom(\rho, \chi^{k-2})) = H^1_{f}(\bfQ, \rho(1-k))$. So, by one-dimensionality of the latter there exists a constant $\alpha \in \bfF$ such that $a'' = \alpha a$.

Hence we get that \be \label{varrho} \varrho =\bmat \chi^{k-2} & a & b+Xb' \\ &\rho & c + Xc' \\ && \chi^{k-1} \emat \cong \bmat   \chi^{k-2} & a+X\alpha a & b+Xb'' \\ &\rho & c  \\ && \chi^{k-1} \emat=\varrho',\ee 
 and the  isomorphism is by conjugation by an element of  $I_4 + X M_4(\bfF)$. Furthermore we note that $$M_0 \varrho' M_0^{-1} = \bmat \chi^{k-2} & a & b+Xb'''\\ & \rho & c \\ && \chi^{k-1}\emat,$$ where $$M_0= \bmat 1 \\ & (1+\alpha X)I_2 \\ && 1+\alpha X\emat \in 1+ XM_4(\bfF).$$ Hence we conclude that we can take $a''=0$. 

 Then
we see that $\bmat \chi^{k-2} & b'' \\  & \chi^{k-1}\emat$ is a quotient of the representation in \eqref{blue qt}. 
 Thus, $b''$ gives rise to an element of $H^1_{\Sigma}(\bfQ, \bfF(-1))$. 
By the argument from the beginning of the proof we see that $b''$ is unramified at all primes $\ell \mid N$, so it in fact gives rise to an element of $H^1_f(\bfQ, \bfF(-1))$ which is zero by Proposition \ref{class groups}.
 \end{proof}

 Let $I_{R'}\subset R'$ denote the total ideal of reducibility corresponding to the universal deformation $(\sigma')^{\rm univ}$ and $I_R \subset R$ the total ideal of reducibility corresponding to the universal deformation $\sigma^{\rm univ}$.
\begin{cor}\label{structure map} The structure maps $\Oo \to R'/I_{R'}$, $\Oo\to R/I_R$ and $\Oo \to R^{\rm red}/I^{\rm tot}$ are all surjective. \end{cor}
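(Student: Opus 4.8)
The plan is to reduce all three assertions to the single claim that $\Oo\to R'/I_{R'}$ is surjective. The device is the standard fact that for an object $A$ of $\textup{LCN}(E)$ the structure map $\Oo\to A$ is surjective if and only if $\fm_A=\fm_A^2+\varpi A$, equivalently if and only if the only $\Oo$-algebra homomorphism $A\to\bfF[X]/X^2$ is the composite $A\to\bfF\hookrightarrow\bfF[X]/X^2$. (Given $\fm_A=\fm_A^2+\varpi A$, the ideal $\fm_A/\varpi A$ equals its own square in the Noetherian local ring $A/\varpi A$, hence vanishes by Nakayama; thus $\fm_A=\varpi A$, and since $A$ is complete an induction on $n$ shows $\Oo\twoheadrightarrow A/\varpi^nA$, whence $\Oo\twoheadrightarrow A$.) Granting $\Oo\twoheadrightarrow R'/I_{R'}$, the surjections $R'\twoheadrightarrow R\twoheadrightarrow R^{\rm red}$ induce $\Oo$-algebra surjections $R'/I_{R'}\twoheadrightarrow R/I_R\twoheadrightarrow R^{\rm red}/I^{\rm tot}$: indeed $\tr\sigma^{\rm univ}$ (resp.\ $\tr\sigma^{\rm red}$) becomes a sum of three pseudocharacters lifting $\chi^{k-2},\tr\rho,\chi^{k-1}$ after reduction modulo $I_R$ (resp.\ modulo $I^{\rm tot}$), so by the minimality of $I_{R'}$ (Proposition~\ref{CH1}) the composites $R'\to R/I_R$ and $R'\to R^{\rm red}/I^{\rm tot}$ kill $I_{R'}$; and a quotient of an $\Oo$-algebra whose structure map is surjective again has surjective structure map, which completes the reduction.

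For the remaining claim I would identify the $\Oo$-algebra homomorphisms $R'/I_{R'}\to\bfF[X]/X^2$. By the universal property of $R'$ such a homomorphism is the same datum as a short crystalline, minimal deformation $\sigma_X:G_\Sigma\to\GL_4(\bfF[X]/X^2)$ of $\ov{\sigma}$ with determinant $\chi^{4k-6}$ (the reduction of $\epsilon^{4k-6}$) whose associated map $R'\to\bfF[X]/X^2$ annihilates $I_{R'}$; unwinding the defining property of the total ideal of reducibility and base-changing the pseudocharacters from $R'/I_{R'}$, the annihilation condition says exactly that $\tr\sigma_X=T_1+T_2+T_3$ for pseudocharacters $T_i$ over $\bfF[X]/X^2$ with $T_1\equiv\chi^{k-2}$, $T_2\equiv\tr\rho$, $T_3\equiv\chi^{k-1}$ modulo $(X)$. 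This is precisely the hypothesis of Lemma~\ref{5}, so $\sigma_X$ is upper-triangular up to strict equivalence; Proposition~\ref{6} then forces $\sigma_X$ to be the trivial deformation. Hence the only $\Oo$-algebra homomorphism $R'/I_{R'}\to\bfF[X]/X^2$ is the one factoring through $\bfF$, and by the criterion above $\Oo\to R'/I_{R'}$ is surjective.

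The substance of the argument is already contained in Lemma~\ref{5} and Proposition~\ref{6}, so I do not expect a genuine obstacle. The only point demanding care is the identification, carried out above, of ``the map $R'\to\bfF[X]/X^2$ annihilates $I_{R'}$'' with the trace-reducibility hypothesis of Lemma~\ref{5}, which follows by base-changing the decomposition $T\otimes R'/I_{R'}=T_1+T_2+T_3$ along $R'/I_{R'}\to\bfF[X]/X^2$; the tangent-space criterion for surjectivity of $\Oo\to A$ and the compatibility of the ideals of reducibility with the quotient maps $R'\twoheadrightarrow R\twoheadrightarrow R^{\rm red}$ are formal.
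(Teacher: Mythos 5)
Your proposal is correct and follows essentially the same route as the paper: the paper likewise reduces everything to $\Oo\to R'/I_{R'}$ (citing Lemma 7.11 of \cite{BergerKlosin13}), characterizes the ideals containing $I_{R'}$ as those modulo which the universal deformation becomes upper-triangular (its Lemma \ref{conj to upper}, proved exactly via Lemma \ref{5} and the minimality defining the ideal of reducibility), and then concludes by the tangent-space/Nakayama argument of Proposition 7.10 of \cite{BergerKlosin13}, whose input is precisely Proposition \ref{6}. You have merely written out in full the steps the paper delegates to \cite{BergerKlosin13}.
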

\begin{proof}  By Lemma 7.11 of \cite{BergerKlosin13} it is enough to show that the structure map $\Oo \to R'/I_{R'}$ is surjective. We need the following lemma.
\begin{lemma} \label{conj to upper} Let $I \subset R'$ be an ideal such that $R'/I \in {\rm LCN}(E)$. Then  $I \supset I'_R$ if and only if $(\sigma')^{\rm univ}$ mod $I$ is an upper-triangular deformation of $\ov{\sigma}$ to $\GL_4(R'/I)$.  \end{lemma} 
\begin{proof} The proof of Corollary 7.8 in \cite{BergerKlosin13} carries over to three Jordan-Hölder factors with the application of Theorem 7.7 in \cite{BergerKlosin13} replaced by an application of Lemma \ref{5} above. \end{proof}
The rest of the proof of Corollary \ref{structure map} is the same as the proof of Proposition 7.10 in \cite{BergerKlosin13} with the application of Corollary 7.8 in \cite{BergerKlosin13} now replaced with an application of Lemma \ref{conj to upper}. 
\end{proof}

\begin{thm} \label{Lvalue bound} Let $\tilde{\rho}$ be a representative of the unique deformation  of $\rho$ to characteristic zero. Suppose that $\#H^1_{f}(\bfQ, \tilde{\rho}(1-k)\otimes E/\Oo) \leq \#\Oo/L\Oo$ for some $L \in \Oo$. Then $\# R^{\rm red}/I^{\rm tot}\leq \#\Oo/L\Oo$. \end{thm}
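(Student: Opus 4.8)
The plan is to bound $\#R^{\mathrm{red}}/I^{\mathrm{tot}}$ by relating it to the Selmer group $H^1_f(\bfQ,\tilde\rho(1-k)\otimes E/\Oo)$ via the theory of reducible deformations developed in the previous section. By Theorem \ref{4}(i)--(ii) we know $I^{\mathrm{tot}}=A^T_{k-2,\rho}A^T_{\rho,k-2}$, and by Lemma \ref{principality theorem 2} we have $A^T_{k-2,\rho}=R^{\mathrm{red}}$ and $A^T_{\rho,k-2}$ is principal; hence $I^{\mathrm{tot}}=A^T_{\rho,k-2}$ is a principal ideal. So it suffices to bound $\#R^{\mathrm{red}}/A^T_{\rho,k-2}$, equivalently (via the surjection $A^{\sigma}_{\rho,k-2}\twoheadrightarrow A^T_{\rho,k-2}$ and Lemma \ref{principality theorem}) to control $A^{\sigma}_{\rho,k-2}/\fm A^{\sigma}_{\rho,k-2}$ and its higher quotients. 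The key point is that $R^{\mathrm{red}}/I^{\mathrm{tot}}$ is, by Corollary \ref{structure map}, a quotient of $\Oo$, hence of the form $\Oo/\varpi^t\Oo$ for some $t$ (or $\Oo$ itself, but it will be finite), so bounding its order amounts to bounding this single exponent $t$.

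First I would reduce to a statement about torsion Selmer groups with coefficients in $\tilde\rho(1-k)\otimes E/\Oo$. Write $R^{\mathrm{red}}/I^{\mathrm{tot}}=\Oo/\varpi^t\Oo$. The universal deformation $\sigma^{\mathrm{red}}$ reduced modulo $I^{\mathrm{tot}}$ is, by Lemma \ref{conj to upper}, an upper-triangular deformation of $\ov\sigma$ to $\GL_4(\Oo/\varpi^t\Oo)$, and by the arguments in Lemma \ref{diag pieces} its diagonal blocks are forced to be the reductions of $\chi^{k-2}$, $\tilde\rho$, $\chi^{k-1}$ (using $R_\rho\cong\Oo$ so $\tilde\rho$ is the unique lift). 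Extracting the sub-quotient extension of $\rho$ by $\chi^{k-2}$ from this deformation produces a non-split extension of $\tilde\rho\bmod\varpi^t$ by $\chi^{k-2}\bmod\varpi^t$, i.e.\ (after twisting, using Remark \ref{identification}) a class in $H^1(\bfQ,\tilde\rho(1-k)\otimes\Oo/\varpi^t)$; short crystallinity of $\sigma^{\mathrm{red}}$ and the minimality/splitting-at-$\ell$ statement (Corollary \ref{splitting at ell}) show this class lies in $H^1_f(\bfQ,\tilde\rho(1-k)\otimes\Oo/\varpi^t)=H^1_f(\bfQ,\tilde\rho(1-k)\otimes E/\Oo)[\varpi^t]$. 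The crucial thing to extract is that the image of the connecting map detects the full length $t$: one shows that the entry $A^{\sigma}_{\rho,k-2}$, which generates $I^{\mathrm{tot}}$, acts on this extension class so that the extension has "order exactly $\varpi^t$" in the appropriate sense, forcing $H^1_f(\bfQ,\tilde\rho(1-k)\otimes E/\Oo)$ to contain an element of order $\geq\varpi^t$; this is essentially the reducibility-datum analogue of Theorem 1.5.5 of \cite{BellaicheChenevierbook} applied not mod $\fm$ but mod $\fm_{R^{\mathrm{red}}/I^{\mathrm{tot}}}^{\,t}$-type ideals. Since $R^{\mathrm{red}}/I^{\mathrm{tot}}\cong\Oo/\varpi^t$ as a cyclic $\Oo$-module, and the universal property gives a surjection $H^1(\bfQ,\tilde\rho(1-k)\otimes\Oo/\varpi^t)\twoheadrightarrow$ (the relevant Hom/extension space), the bound $\#H^1_f(\bfQ,\tilde\rho(1-k)\otimes E/\Oo)\leq\#\Oo/L\Oo$ then yields $\varpi^t\mid L$, i.e.\ $\#R^{\mathrm{red}}/I^{\mathrm{tot}}=\#\Oo/\varpi^t\Oo\leq\#\Oo/L\Oo$.

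To implement this cleanly I would work directly with the GMA $\mathrm{GMA}(A^{\sigma})$ of Theorem \ref{1.4.4}(ii): the entry $A^{\sigma}_{\rho,k-2}$ is the ideal generated by the lower-left block of $\sigma^{\mathrm{red}}$, and modding out by $I^{\mathrm{tot}}$ kills exactly the "reducible obstruction," so on $R^{\mathrm{red}}/I^{\mathrm{tot}}$ the representation becomes upper-triangular and the residual extension class of $\rho$ by $\chi^{k-2}$ it carries, viewed in $H^1_f(\bfQ,\tilde\rho(1-k)\otimes E/\Oo)$, is nonzero of order equal to the annihilator exponent of $A^{\sigma}_{\rho,k-2}/(\text{its square or }\fm A^{\sigma}_{\rho,k-2})$. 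I would invoke Proposition \ref{torsion coeffs}(1) (applicable since $(\tilde\rho(1-k)\otimes E/\Oo)^{G_\Sigma}=0$ by Proposition \ref{invariants}(ii)) to pass freely between torsion-level and divisible-coefficient Selmer groups. The main obstacle I anticipate is the precise bookkeeping in the last paragraph: making rigorous that the connecting/extension map $R^{\mathrm{red}}/I^{\mathrm{tot}}\to H^1_f$ is length-preserving (not merely that a nonzero class exists), which is where one must use that $R_\rho\cong\Oo$ forces the $\rho$-block deformation to be rigid, that $H^1_f(\bfQ,\bfF(-1))=0$ (Proposition \ref{class groups}) kills the $\chi^{k-2}$-by-$\chi^{k-1}$ and $\chi^{k-2}$-diagonal ambiguities exactly as in Proposition \ref{6}, and that $\dim_\bfF H^1_f(\bfQ,\rho(1-k))=1$ pins down the residual extension up to scalar. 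Once these rigidity inputs are in place, the chain $\#R^{\mathrm{red}}/I^{\mathrm{tot}}\leq\#H^1_f(\bfQ,\tilde\rho(1-k)\otimes E/\Oo)\leq\#\Oo/L\Oo$ follows, which is the assertion of Theorem \ref{Lvalue bound}.
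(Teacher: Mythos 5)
Your proposal follows essentially the same route as the paper's proof: use Corollary \ref{structure map} and Lemma \ref{conj to upper} to realize $R^{\rm red}/I^{\rm tot}$ (the paper actually passes first to $R'/I_{R'}$ via Lemma 7.11 of \cite{BergerKlosin13}) as $\Oo/\varpi^s\Oo$ supporting an upper-triangular deformation with forced diagonal blocks $\epsilon^{k-2},\tilde{\rho},\epsilon^{k-1}$ mod $\varpi^s$, and then show an off-diagonal entry yields a Selmer class of order $\#\Oo/\varpi^s$. The one step you flag as your ``main obstacle'' --- that the resulting class has full order $\varpi^s$, not merely that it is nonzero --- is closed in the paper much more simply than the mod-$\fm^t$ analogue of Bella{\"\i}che--Chenevier~1.5.5 you gesture at: the entry $*_3$ (resp.\ $*_1$) reduces modulo $\varpi$ to the non-trivial residual class $c$ (resp.\ $a$) of $\ov{\sigma}$, and under the identification $H^1_f(\bfQ,\tilde{\rho}(1-k)\otimes\Oo/\varpi^s)\cong H^1_f(\bfQ,\tilde{\rho}(1-k)\otimes E/\Oo)[\varpi^s]$ of Proposition \ref{torsion coeffs}(1) non-vanishing of the mod-$\varpi$ reduction is precisely the statement that the class is not killed by $\varpi^{s-1}$, i.e.\ has order exactly $\#\Oo/\varpi^s\Oo$. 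Two minor blemishes: your opening reduction through principality of $I^{\rm tot}$ is both unnecessary and unavailable here, since Lemma \ref{principality theorem 2} uses the hypotheses of Theorem \ref{4}(iii) which are not assumed in this statement; and unramifiedness away from $p$ of the off-diagonal classes of the lifted (not just residual) representation needs the minimality argument of Proposition \ref{6}, not Corollary \ref{splitting at ell}.
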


\begin{proof} 
Again by Lemma 7.11 of \cite{BergerKlosin13} it is enough to show that $\#R'/I_{R'} \leq \#\Oo/L\Oo$.
First let us note that it follows from the proof of Lemma \ref{diag pieces} that there do not exist any non-trivial short crystalline deformations of $\chi^{k-2}$ and of $\chi^{k-1}$ to $(\bfF[X]/X^2)^{\times}$. Let $R_{k-2}$ (resp. $R_{k-1}$) denote the universal short crystalline deformation ring of $\chi^{k-2}$ (resp. $\chi^{k-1}$). Then we can adapt the proof of Proposition 7.10 in \cite{BergerKlosin13} with $S$ there replaced by $R_{k-2}$ to show that $R_{k-2}/\varpi R_{k-2}=\bfF$ and hence by Nakayama's lemma the structure map $\Oo \to R_{k-2}$ is onto. However, since $\epsilon^{k-2}$ is a deformation of $\chi^{k-2}$ to $\Oo^{\times}$, we must in fact have $R_{k-2}=\Oo$ and so $\epsilon^{k-2}$ is a unique such deformation. The same conclusion holds for $R_{k-1}$. 
Assume that $\#R'/I_{R'} =\#\Oo/\varpi^s \Oo> \#\Oo/L\Oo$ (we allow $s=\infty$ here). Then by Lemma \ref{conj to upper} there exists an upper-triangular  deformation $\sigma: G_{\Sigma} \to \GL_4(\Oo/\varpi^s\Oo)$ of $\ov{\sigma}$. By the above argument and using the fact that $\tilde{\rho}$ is the unique deformation of $\rho$ to $\GL_2(\Oo)$ the deformation $\sigma$ must have the form $$\bmat \epsilon^{k-2} & *_1 & *_2 \\ & \tilde{\rho} & *_3 \\ && \epsilon^{k-1} \emat,$$ where the diagonal pieces are understood to be taken mod $\varpi^s$. 

Arguing as in the proof of Proposition \ref{6} 
we see that minimality guarantees that the classes of $*_1$ and $*_3$ are unramified away from $p$ and thus give rise to classes in $H^1_f(\bfQ, \tilde{\rho}(1-k)\otimes E/\Oo[\varpi^s]) = H^1_f(\bfQ, \tilde{\rho}(1-k)\otimes E/\Oo)[\varpi^s]$ (cf. Proposition \ref{torsion coeffs}(1)).   Since the reduction $\ov{\sigma}$ of $\sigma$ has the property that its entry corresponding to $*_3$ above gives rise to a non-trivial element in $H^1_{f}(\bfQ, \rho(1-k))$, we conclude that the image of $*_3$ is not contained in $\varpi \Oo/\varpi^s \Oo$, hence gives rise to an element of $H^1_{f}(\bfQ, \tilde{\rho}(1-k)\otimes E/\Oo)$ 
 of order $\#\Oo/\varpi^s$, which is a contradiction.
\end{proof}

\begin{cor} \label{traces} The rings $R'$, $R$ and $R^{\rm red}$ are all topologically generated as an $\Oo$-algebra by the set $\{\tr \sigma (\Frob_l) \mid l \nmid Np\}$, where $\sigma$ stands for $(\sigma')^{\rm univ}$, $\sigma^{\rm univ}$ and $\sigma^{\rm red}$ respectively. \end{cor}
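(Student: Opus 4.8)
The plan is to prove the statement first for $R'$ and then deduce it for $R$ and for $R^{\rm red}$: these are topological quotients of $R'$ under which $(\sigma')^{\rm univ}$ maps to $\sigma^{\rm univ}$ and to $\sigma^{\rm red}$, and a continuous surjection of complete local $\Oo$-algebras carries topological generators to topological generators, so it suffices to treat $R'$. Write $\sigma$ for $(\sigma')^{\rm univ}$ and let $C\subseteq R'$ be the closed $\Oo$-subalgebra topologically generated by $\{\tr\sigma(\Frob_l)\mid l\nmid Np\}$. By the Chebotarev density theorem the Frobenii $\Frob_l$ ($l\nmid Np$) are dense in $G_{\Sigma}$, so continuity of the trace gives $\tr\sigma(g)\in C$ for every $g\in G_{\Sigma}$; thus $C$ is the closed $\Oo$-subalgebra generated by all traces of $\sigma$. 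Since $R'$ is Noetherian and $C$ is a closed $\Oo$-subalgebra with residue field $\bfF$, the usual Nakayama-plus-completeness argument shows that $C=R'$ as soon as the map $C\to R'/\fm_{R'}^2$ is surjective onto the (relative) cotangent space.

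The conceptual core is the dual reformulation of this surjectivity. Failure of $C\to R'/\fm_{R'}^2$ to be surjective is equivalent to the existence of an $\Oo$-algebra homomorphism $R'\to\bfF[X]/X^2$, distinct from the reduction map but restricting to the reduction map on $C$; by the universal property of $R'$ this is the same as a non-trivial deformation $\sigma_0\colon G_{\Sigma}\to\GL_4(\bfF[X]/X^2)$ of $\ov{\sigma}$ (short crystalline, minimal, with determinant $\epsilon^{2k-3}$) whose trace equals the constant pseudocharacter $\tr\ov{\sigma}$. So the crux is to rule out such $\sigma_0$, and for this one invokes the results already proved. The trace $\tr\sigma_0=\tr\ov{\sigma}=\chi^{k-2}+\tr\rho+\chi^{k-1}$ decomposes as the sum of the constant pseudocharacters $T_1=\chi^{k-2}$, $T_2=\tr\rho$, $T_3=\chi^{k-1}$, which reduce to $\chi^{k-2}$, $\tr\rho$, $\chi^{k-1}$ modulo the maximal ideal $(X)$, so Lemma \ref{5} applies and $\sigma_0$ is upper-triangular. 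But Proposition \ref{6} asserts that there is no non-trivial upper-triangular deformation of $\ov{\sigma}$ to $\bfF[X]/X^2$, so $\sigma_0$ is the trivial deformation — a contradiction.

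Hence $C\to R'/\fm_{R'}^2$ is surjective, so $\fm_{R'}=\fm_C R'$ with $\fm_C=\fm_{R'}\cap C$; iterating gives $\fm_{R'}^n\subseteq C+\fm_{R'}^{n+1}$ and therefore $R'=C+\fm_{R'}^n$ for all $n$, and since $C$ is closed we have $C=\bigcap_n\bigl(C+\fm_{R'}^n\bigr)=R'$. Passing to the quotients $R'\twoheadrightarrow R\twoheadrightarrow R^{\rm red}$ yields the assertions for $R$ and $R^{\rm red}$. The only non-formal ingredient is the input supplied by Proposition \ref{6} (together with Lemma \ref{5}), i.e. the first-order rigidity of the deformation problem among reducible deformations; this is the step that does the real work, and it has been established earlier in the paper, so the present argument is essentially a packaging of it.
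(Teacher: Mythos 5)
Your proof is correct and follows essentially the same route as the paper, which reduces to $R'$ and then invokes the trace-generation argument of Proposition 7.13 of \cite{BergerKlosin13} with the upper-triangularity input supplied by Lemma \ref{conj to upper} (itself resting on Lemma \ref{5}) and the first-order rigidity of Proposition \ref{6}. Your write-up simply unpacks that cited argument explicitly (Chebotarev plus continuity to capture all traces, dualizing the cotangent surjectivity to a deformation to $\bfF[X]/X^2$ with constant trace, then Lemma \ref{5} and Proposition \ref{6}), under the same standing hypotheses of the section.
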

\begin{proof} It is enough to prove this for $R'$ as the other rings are quotients of it. For this one can use the same proof as the one of Proposition 7.13 in \cite{BergerKlosin13} replacing again the application of Corollary 7.8 with Lemma \ref{conj to upper}. \end{proof}

The upshot of this section is the following theorem identifying conditions when the universal deformation ring is a  discrete valuation ring. To make the statement self-contained we will include all the assumptions made so far.
\begin{thm} \label{dvrness} Let
$\rho_f : G_{\Sigma} \to \GL_2(E)$ be the Galois representation attached to a newform $f \in S_{2k-2}(N)$ for $N$ a square-free integer with $p \nmid N$ and $\Sigma=\{ \ell \mid N \} \cup \{p\}$ such that $p \nmid 1+w_{f, \ell}\ell$ for all primes $\ell \mid N$. Suppose also that the residual representation $\rho:=\ov{\rho}_f$ is absolutely irreducible and ramified at every prime $\ell \mid N$. Assume that $\dim_{\bfF}H^1_f(\bfQ, \rho(2-k))=1$ and that $\# H^1_f(\bfQ, \rho_f (1-k)\otimes E/\Oo) \leq \# \bfF$. Furthermore assume that $R_{\rho}$ is a dvr. 
Let $\ov{\sigma}: G_{\Sigma} \to \GL_4(\bfF)$ be a continuous representation  short crystalline at $p$ and semi-abelian of the form $$\ov{\sigma}=\bmat \chi^{k-2} & a & b\\ & \rho & c \\ && \chi^{k-1}\emat$$ with $a,c$ giving rise to non-zero elements of $H^1_{f}(\bfQ, \rho(1-k))$. Suppose $\ov{\sigma}$ admits a deformation $\sigma$ to $\GL_4(\Oo)$ which is $\tau$-self-dual. 

  Then $\sigma$ gives rise to an $\Oo$-algebra isomorphism $R^{\rm red}_{\ov{\sigma}}=R^{\rm red} \xrightarrow{\sigma}\Oo$. \end{thm}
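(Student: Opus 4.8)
The plan is to deduce from Sections \ref{The ideals of reducibility} and \ref{Cyclicity} that $R^{\rm red}$ is a Noetherian local ring whose maximal ideal is principal, and then to use the characteristic zero point $\sigma$ to upgrade this to $R^{\rm red}\cong\Oo$. First I would record that the $\tau$-self-dual deformation $\sigma\colon G_\Sigma\to\GL_4(\Oo)$ of $\ov\sigma$ defines, by Proposition \ref{reprsd}, an $\Oo$-algebra homomorphism $R\to\Oo$ inducing the structure map to the residue field; since $\Oo$ is reduced this factors through $R^{\rm red}$, and as the composite $\Oo\to R^{\rm red}\xrightarrow{\sigma}\Oo$ of the structure map with $\sigma$ is an $\Oo$-algebra endomorphism of $\Oo$, hence the identity, the map $\sigma\colon R^{\rm red}\twoheadrightarrow\Oo$ is surjective. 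Everything then reduces to showing $\sigma$ is injective.

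Next I would pin down $R^{\rm red}/I^{\rm tot}$. Since $p>2k-2$ and $N$ is squarefree, $\rho_f$ (after choosing a Galois-stable $\Oo$-lattice) is a short crystalline, minimal deformation of $\rho=\ov\rho_f$ with determinant $\epsilon^{2k-3}$ by \eqref{min for rho 2}, so the hypothesis $R_\rho\cong\Oo$ forces $\tilde\rho$ to be strictly equivalent to $\rho_f$, whence $\#H^1_f(\bfQ,\tilde\rho(1-k)\otimes E/\Oo)=\#H^1_f(\bfQ,\rho_f(1-k)\otimes E/\Oo)\le\#\bfF$. Theorem \ref{Lvalue bound} applied with $L=\varpi$ then gives $\#R^{\rm red}/I^{\rm tot}\le\#\bfF$. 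On the other hand $\tr\ov\sigma=\chi^{k-2}+\tr\rho+\chi^{k-1}$ is visibly a sum of pseudocharacters of the required residual shape, so $I^{\rm tot}\subseteq\fm_{R^{\rm red}}$; together with the surjectivity of the structure map $\Oo\to R^{\rm red}/I^{\rm tot}$ from Corollary \ref{structure map}, this exhibits $R^{\rm red}/I^{\rm tot}$ as a nonzero quotient of $\Oo$ which surjects onto $\bfF$ and has cardinality at most $\#\bfF$. As the quotients of the dvr $\Oo$ are $\Oo$ and the finite rings $\Oo/\varpi^n\Oo$, we conclude $R^{\rm red}/I^{\rm tot}\cong\bfF=\Oo/\varpi\Oo$.

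Now I would invoke Theorem \ref{4}(iii), whose hypotheses $\dim_\bfF H^1_f(\bfQ,\rho(2-k))=1$ and $p\nmid 1+w_{f,\ell}\ell$ for all $\ell\mid N$ are precisely those in force, to conclude that $I^{\rm tot}=(t)$ is a principal ideal of $R^{\rm red}$. Combined with $R^{\rm red}/(t)\cong\bfF$, this forces $\fm_{R^{\rm red}}=(t)$ (in particular $\varpi\in(t)$). Since $R^{\rm red}$ is Noetherian and local, Krull's intersection theorem gives $\bigcap_n(t^n)=\bigcap_n\fm_{R^{\rm red}}^n=0$. Hence every nonzero $x\in R^{\rm red}$ can be written $x=t^nu$, where $n\ge 0$ is maximal with $x\in(t^n)$ and $u\notin(t)=\fm_{R^{\rm red}}$, i.e. $u$ is a unit. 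Because $\sigma$ is a surjection of local rings, $\sigma(\fm_{R^{\rm red}})=\fm_\Oo$, so $\sigma(t)$ generates $(\varpi)$; therefore $\sigma(x)=\sigma(t)^n\sigma(u)$ is a unit multiple of $\varpi^n$, which is nonzero as $\Oo$ is a domain. Thus $\ker\sigma=0$ and $\sigma\colon R^{\rm red}\xrightarrow{\sim}\Oo$, as claimed.

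The substance of the argument lies entirely upstream: the principality of $I^{\rm tot}$ (Theorem \ref{4}, resting on the GMA formalism of \cite{BellaicheChenevierbook} and Lemma \ref{injection1}), the Selmer-group bound $\#R^{\rm red}/I^{\rm tot}\le\#\bfF$ (Theorem \ref{Lvalue bound}, which in turn rests on the non-existence of upper-triangular deformations to $\bfF[X]/X^2$, Proposition \ref{6}), and the surjectivity of the structure map on the reducibility quotient (Corollary \ref{structure map}). Given those inputs, the only points in this final step requiring any care are the identification of $\tilde\rho$ with $\rho_f$ and the (easy) observation that $I^{\rm tot}$ is a proper ideal; the remainder is the elementary commutative algebra of a Noetherian local ring with principal maximal ideal, which I do not expect to pose any real obstacle.
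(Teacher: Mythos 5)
Your proof is correct and follows essentially the same route as the paper: factor the map through $R^{\rm red}$, use Theorem \ref{Lvalue bound} and Theorem \ref{4}(iii) to show $\fm_{R^{\rm red}}=I^{\rm tot}$ is principal, and deduce $R^{\rm red}\cong\Oo$ from the surjection onto $\Oo$. The only difference is that the paper cites Lemma 3.4 of \cite{Calegari06} for the final commutative-algebra step, whereas you unpack it via Krull's intersection theorem (and you are slightly more careful in identifying $\tilde{\rho}$ with $\rho_f$ before invoking Theorem \ref{Lvalue bound}).
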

\begin{proof}  The $\Oo$-algebra map $R \twoheadrightarrow \Oo$ induced by $\sigma$ factors through $R^{\rm red}$ since $\Oo$ is torsion-free. Theorem \ref{4} combined with Theorem \ref{Lvalue bound} imply that the maximal ideal of $R^{\rm red}$ is principal. Because of the surjection to $\Oo$ its generator is not nilpotent. As in \cite{Calegari06} Lemma 3.4 we can therefore deduce that $R^{\rm red}$ is a discrete valuation ring and the surjection an isomorphism $R^{\rm red}\cong \Oo$.  \end{proof}

\section{Application to the Paramodular Conjecture} \label{Application to the Paramodular Conjecture}

 The following conjecture is due to Brumer and Kramer (cf. Conjecture 1.1.1 in  \cite{Brumeretal19preprint}) and is often referred to as the Paramodular Conjecture.

\begin{conj}[Brumer - Kramer]  \label{conj:paramodular}  For every isogeny class of abelian surfaces $A_{/\bfQ}$ of conductor $N$ with $\End_{\bfQ} A = \bfZ$ there exists a  weight 2 Siegel modular form $F$,  which is not in the space spanned by the Saito-Kurokawa lifts, has level $K(N)$ and rational eigenvalues,  where $K(N)$ is the paramodular group of level $N$ defined by $K(N) = \gamma M_{4}(\bfZ) \gamma^{-1} \cap \Sp_4(\bfQ)$ with $\gamma = \diag[1,1,1,N]$.  The $L$-series of $A$ and $F$ should agree and the $p$-adic representation of $T_{p}(A) \otimes \bfQ_{p}$ should be isomorphic to the one associated to $F$ for any $p$ prime to $N$ where $T_{p}(A)$ is the $p$-adic Tate module.
\end{conj}

In this section we will show how our results can be used to verify this conjecture in some cases when $A$ has rational $p$-torsion. More precisely, our method will only allow for the verification of the second claim, i.e., the existence of the isomorphism of the Tate module with the representation associated to $F$. 

Let $p>2$ be a prime and let $A$ be an abelian surface of square-free conductor $N$ as in Conjecture \ref{conj:paramodular}. Suppose that $p \nmid N$ and that $A$ has a 
polarization of degree prime to $p$ and a $\bfQ$-rational point of order $p$. Then the $p$-adic Tate module gives rise to a Galois representation   $$\sigma_A:=V_p(A): G_{\Sigma} \to \GL_4(\bfQ_p),$$ where $\Sigma=\{p\} \cup \{\ell \mid N\}$, short crystalline at $p$  (by \cite{SerreTate68} and \cite{Fontaine82}), which is absolutely irreducible since it is semisimple and  $\End_{\bfQ_p[G_{\bfQ}]}(V_{p}(A))=\End_{\bfQ}(A) \otimes \bfQ_p=\bfQ_p$ by \cite{Faltings83}.   
Furthermore, under our assumptions the semisimple residual representation $\ov{\sigma}_A^{\rm ss}: G_{\Sigma} \to \GL_4(\bfF_p)$ has the form $$\ov{\sigma}_A^{\rm ss} = 1 \oplus \rho^{\rm ss} \oplus \chi,$$ where, as before, $\chi$ is the mod $p$ cyclotomic character and $\rho$ is a two-dimensional representation. 
The representation $\sigma_A$ has determinant $\epsilon^2$ and hence $\det \rho=\chi$, so in particular, $\rho$ is odd. 

We will assume that $\rho$ is absolutely irreducible, ramified at all $\ell \mid N$. It follows from Serre's conjecture  
 \cite{KhareWintenberger09} (see Proposition 4 of \cite{Serre87} for the determination of the Serre weight) that $\rho \cong \ov{\rho}_f$ where $\rho_f: G_{\Sigma} \to \GL_2(\bfQ_p)$ is the Galois representation attached to a newform $f \in S_2(N)$ and $\ov{\rho}_f$ denotes its mod $p$ reduction.

We now assume that $A$ has semi-abelian reduction at all $\ell \mid N$, i.e. the reduction is an extension of an elliptic curve by a torus  and that $p$ does not divide the Tamagawa number at $\ell$, i.e. the number of connected components of the special fiber of a Neron model of $A$ at $\ell$.   Under these assumptions \cite{Duff98} Proposition 1.3 proves that $\ov{\sigma}_{A}|_{I_{\ell}} \cong \exp(t_{\ell}N_1)$.  The proof generalizes to $A[p^n]$ as \cite{SerreTate68} Lemma 1 and 2 hold in this generality, so we deduce that $\sigma_{A}|_{I_{\ell}} \cong \exp(t_{\ell}N_1)$.

For a positive integer $N$ and an integer $k\geq 2$ we will denote by $\mS_k(N)^{\rm para}$ the space of Siegel modular forms of genus 2, (parallel) weight $k$ and paramodular level $N$. 
For a prime $\ell \nmid N$, write $T_{\ell, 2}$ for the Hecke operator given by the double coset $K(N) \diag(1,1, \ell, \ell) K(N)$, $T_{\ell,1}$ for the Hecke operator given by the double coset $K(N) \diag(1, \ell, \ell^2, \ell) K(N)$, and $T_{\ell,0}$ for the Hecke operator given by the double coset $\ell K(N)$ (see e.g. p. 18 of \cite{Pilloni17preprint}). We will say that $F \in \mS_k(N)^{\rm para}$ is an eigenform if $F$ is an eigenform for $T_{\ell, i}$ for all $\ell \nmid N$ and $i=0,1,2$. If $F$ is an eigenform we will write $\lambda_{\ell,i}(F)$ for the eigenvalue of $T_{\ell,i}$ corresponding to $F$.

\begin{thm}[Taylor, Weissauer, Laumon]  \label{Gal rep for Sieg} 
For $k \geq 2$ let $F \in S_k(N)^{\rm para}$ be an eigenform. There exists a finite extension $E$ of $\bfQ_p$ and a continuous semisimple representation $$\sigma_F: G_{\Sigma} \to GL_4(E)$$ with $\sigma_F^{\vee} \cong \sigma_F \otimes \epsilon^{3-2k}$ such that for all primes $\ell \nmid Np$ one has $\det (I_4 -X\sigma_F(\Frob_{\ell}))^{-1} = Q_{\ell}(X)$ for the Hecke polynomial $$Q_{\ell}(X)=1- \lambda_{\ell,2}(F) X + \ell(\lambda_{\ell,1}(F) + (1+\ell^2) \lambda_{\ell,0}(F))X^2 - \ell^3\lambda_{\ell,2}(F) \lambda_{\ell,0}(F)X^3 + \ell^6 \lambda_{\ell,0}(F)^2 X^4,$$  where $\lambda_{\ell,0}(F)=\ell^{2k-6}$. 
\end{thm}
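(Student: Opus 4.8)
The plan is to deduce the statement from the construction of spin Galois representations attached to automorphic representations of $\GSp_4$ over $\mathbf{Q}$, carried out in three steps. First I would pass from the eigenform $F$ to an automorphic representation: $F$ generates a cuspidal automorphic representation $\pi=\bigotimes_v\pi_v$ of $\GSp_4(\mathbf{A}_{\mathbf{Q}})$ with trivial central character, with $\pi_\infty$ a holomorphic (limit of) discrete series of weight $k$, and with $\pi_\ell$ spherical for every $\ell\nmid N$ by paramodularity; in particular $\pi$ is unramified outside $\Sigma=\{p\}\cup\{\ell\mid N\}$, so the representation to be constructed factors through $G_\Sigma$. For $k\geq 3$ I would take $\sigma_F$ to be the semisimple $4$-dimensional representation cut out by $\pi_f$ in the $p$-adic \'etale cohomology of a Siegel modular threefold of paramodular level: this is the theorem of Taylor, with local--global compatibility at the unramified primes supplied by Weissauer and by Laumon via the stabilized trace formula. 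When $\pi$ is of Saito--Kurokawa or Yoshida type, or one-dimensional, the same authors describe $\sigma_F$ explicitly as a direct sum of a lower-dimensional irreducible constituent and Tate twists --- this is the reducible case relevant to $\SK(f)$, where $\sigma_F\cong\rho_f\oplus\epsilon^{k-1}\oplus\epsilon^{k-2}$. For $k=2$, where $\pi_\infty$ is a limit of discrete series and $\pi$ contributes to no cohomology group in the naive range, I would instead obtain $\sigma_F$ by $p$-adic interpolation, as a limit of the $4$-dimensional representations attached to higher (cohomological) weight Siegel eigenforms congruent to $F$, together with a pseudo-representation argument; this still produces a genuine continuous semisimple $\sigma_F\colon G_\Sigma\to\GL_4(E)$.

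Next I would make local--global compatibility at $\ell\nmid Np$ explicit. It asserts that the eigenvalues of $\sigma_F(\Frob_\ell)$ are the spin Satake parameters $\beta_{1,\ell}=\alpha_{0,\ell}$, $\beta_{2,\ell}=\alpha_{0,\ell}\alpha_{1,\ell}$, $\beta_{3,\ell}=\alpha_{0,\ell}\alpha_{2,\ell}$, $\beta_{4,\ell}=\alpha_{0,\ell}\alpha_{1,\ell}\alpha_{2,\ell}$ of the unramified representation $\pi_\ell$ (in arithmetic normalization), so that $\det(I_4-X\sigma_F(\Frob_\ell))=\prod_{i=1}^{4}(1-\beta_{i,\ell}X)$. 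It then remains to rewrite the elementary symmetric functions of the $\beta_{i,\ell}$ in terms of the Hecke eigenvalues $\lambda_{\ell,i}(F)$; this is exactly the Satake isomorphism for $\GSp_4$ applied to the double cosets defining $T_{\ell,0}$, $T_{\ell,1}$, $T_{\ell,2}$ in the normalization of \cite{Pilloni17preprint}. Carrying out this bookkeeping --- in particular tracking the power of $\ell$ contributed by each double coset, which is what produces the factor $\ell$ in front of the $X^2$-term and the value $\lambda_{\ell,0}(F)=\ell^{2k-6}$ coming from the central character --- recovers $Q_\ell(X)$ in the displayed shape.

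Finally I would record the self-duality: the spin representation of $\GSp_4$ takes values in $\GSp_4$, with similitude character the power of the cyclotomic character fixed by the motivic weight $2k-3$ of $F$; since $\pi$ has trivial central character this character is $\epsilon^{2k-3}$, whence $\sigma_F^\vee\cong\sigma_F\otimes\epsilon^{3-2k}$, consistently with the constant term $\ell^6\lambda_{\ell,0}(F)^2=\ell^{4k-6}$ of $Q_\ell$ (which forces $\det\sigma_F=\epsilon^{4k-6}$). The only ingredient here that is not essentially a citation or a normalization check is the limit-of-discrete-series case $k=2$ in the first step, where the absence of a direct cohomological model makes the $p$-adic interpolation argument the main obstacle; the rest is the Satake computation above.
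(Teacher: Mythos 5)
Your proposal follows essentially the same route as the paper, which establishes this theorem only by a remark citing Laumon and Weissauer for the cohomological weights $k>2$ (Galois representations in the \'etale cohomology of Siegel threefolds, with unramified local--global compatibility) and Taylor's congruence/pseudo-representation argument for $k=2$, together with the Satake bookkeeping that produces the displayed shape of $Q_\ell(X)$ and the self-duality from the spin similitude character. The one point the paper is careful about that you elide by writing $\pi=\bigotimes_v\pi_v$ is that the automorphic representation generated by $F$ need not be irreducible --- it may decompose as $\pi_1\oplus\cdots\oplus\pi_n$ --- but since the $\pi_i$ agree locally away from $N$ they all yield the same semisimple $\sigma_F$, so this does not affect the conclusion.
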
 

\begin{rem}
This result is well-known, but for the benefit of the reader we explain the relationship to results in the literature. The eigenform $F$ generates an automorphic representation $\pi$ that decomposes into a direct sum $\pi_1 \oplus \ldots \oplus \pi_n$ of irreducible cuspidal automorphic representations of $\GSp_4$ whose local representations agree away from $N$. Laumon \cite{Laumon05} and Weissauer \cite{Weissauer05} constructed the Galois representations for irreducible automorphic representations $\pi$ with $\pi_{\infty}$ holomorphic discrete series (and hence for eigenforms $F$ as above for weight $k>2$ as all $\pi_i$ give rise to the same semi-simple Galois representation). The result for $k=2$ can be deduced from this using Taylor's argument in Example 1 of section 1.3 in \cite{Taylor91}. \cite{Jorza10} and \cite{Mok14} Proposition 4.14 provide an alternative proof (for \cite{Mok14} under the restriction that 
$F$ be of type (G) in the sense of  \cite{Schmidt16} section 2.1, i.e. such that all $\pi_i$ have a cuspidal transfer to $\GL_4$, but see Remark \cite{Mok14} 4.15). 
\end{rem}

\begin{thm}[\cite{ChaiFaltings90}, \cite{Jorza12} Theorem 3.1] \label{Jorza} Let $F$ be as in Theorem \ref{Gal rep for Sieg}, $k \geq 2$ and $p \nmid N$.  If $k=2$ assume that  $F$ has pairwise distinct roots for the Hecke polynomial    $Q_p(F)$.  Then $\sigma_F$ is crystalline at $p$  and short crystalline if $p>2k-2$. \end{thm}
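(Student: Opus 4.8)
The statement bundles two facts: the crystallinity of $\sigma_F$ at $p$, which is the genuine input, and an elementary bound on Hodge--Tate weights which upgrades this to \emph{short} crystallinity once $p>2k-2$. The plan is to take the first from \cite{Jorza12} Theorem 3.1 (which itself rests on the smooth arithmetic toroidal compactifications of \cite{ChaiFaltings90}) and to carry out the second by hand.

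For the crystallinity I would recall the shape of the argument in the two ranges of $k$. When $k>2$ the eigenform $F$ is cohomological, so a twist of $\sigma_F$ is cut out of the $p$-adic \'etale cohomology $H^3_{\textup{\'et}}$ of a Siegel threefold of level prime to $p$ with coefficients in the automorphic local system attached to the weight; since $p\nmid N$ the situation extends smoothly over $\bfZ_{(p)}$ by \cite{ChaiFaltings90}, and the comparison theorems of $p$-adic Hodge theory for cohomology with such coefficients force $\sigma_F|_{G_{\bfQ_p}}$ to be crystalline. When $k=2$ the archimedean component of the associated automorphic representation is only a limit of discrete series, so $F$ is not cohomological in the usual sense and $\sigma_F$ is produced only indirectly through Taylor's congruence argument (cf.\ the Remark following Theorem \ref{Gal rep for Sieg}); here one knows a priori only that $\sigma_F|_{G_{\bfQ_p}}$ is de Rham. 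Since $p\nmid N$ the local component $\pi_p$ is unramified, and the hypothesis that $Q_p(F)$ has four distinct roots is exactly what is needed to conclude that the associated Weil--Deligne representation has vanishing monodromy operator, and hence that $\sigma_F|_{G_{\bfQ_p}}$ is crystalline rather than merely potentially semistable. In both cases this is \cite{Jorza12} Theorem 3.1.

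It remains to verify the filtration condition defining ``short''. The Hodge--Tate weights of $\sigma_F$ are $\{0,\,k-2,\,k-1,\,2k-3\}$: they can be read off from the Hodge filtration on the de Rham realization attached to a genus-$2$ weight-$k$ Siegel eigenform (equivalently from the infinity type), the multiset being symmetric under $w\mapsto (2k-3)-w$ in accordance with the self-duality $\sigma_F^{\vee}\cong\sigma_F\otimes\epsilon^{3-2k}$ of Theorem \ref{Gal rep for Sieg}. Hence, writing $D=(B_{\rm crys}\otimes_{\bfQ_p}\sigma_F)^{G_{\bfQ_p}}$, one has ${\rm Fil}^0 D=D$ and ${\rm Fil}^{2k-2}D=0$; the hypothesis $p>2k-2$ gives $2k-2\le p-1$, so ${\rm Fil}^{p-1}D=0$, which is precisely the definition of short crystalline recorded in Section \ref{Setup}.

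The only substantive difficulty is the weight-$2$ case: for regular weight the crystallinity is routine $p$-adic Hodge theory applied to the good-reduction cohomology of a Shimura variety, whereas in weight $2$ there is no geometric realization of $\sigma_F$ to exploit, and one must lean on the distinct-roots hypothesis to control the local representation at $p$. This is the step at which I would defer entirely to \cite{Jorza12} (and, for the underlying geometry, to \cite{ChaiFaltings90}).
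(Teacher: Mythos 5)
Your proposal is correct and takes essentially the same route as the paper, which offers no proof beyond the citations to \cite{ChaiFaltings90} and \cite{Jorza12}: the crystallinity (including the role of the distinct-roots hypothesis in weight $2$) is exactly what is being imported from Jorza. Your explicit verification of the ``short'' condition via the Hodge--Tate weights $\{0,k-2,k-1,2k-3\}$ and the inequality $2k-2\le p-1$ is the standard elementary argument the paper leaves implicit.
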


\begin{prop} \label{prop8.4}
For $k\geq 2$ and $N$ squarefree let $F\in S_k(N)^{\rm para}$ be an eigenform of type $(G)$ in the sense of  \cite{Schmidt16} section 2.1. 
If $k=2$ we further assume that $\sigma_F$ is absolutely irreducible and that $\ov{\sigma}_F^{\rm ss}$ is ramified at $\ell$ 
for all $\ell \mid N$.
Then we have $$\sigma_F|_{I_{\ell}} \cong \exp(t_p N_1) \text{ for all } \ell \mid N.$$
\end{prop}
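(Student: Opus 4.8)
The plan is to reduce the statement to a known fact about the local component $\pi_\ell$ of the automorphic representation generated by $F$, exploiting that $F$ has type $(G)$, and then to transport this to $\sigma_F$ by local--global compatibility at the primes dividing $N$.

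First I would let $\pi=\otimes'_v\pi_v$ be the cuspidal automorphic representation of $\GSp_4(\bfA)$ generated by $F$. Since $N$ is squarefree and $\ell\mid N$, the local paramodular group at $\ell$ is $\mathrm{K}(\ell)$, so $\pi_\ell^{\mathrm{K}(\ell)}\neq 0$, i.e.\ the paramodular conductor exponent of $\pi_\ell$ is at most $1$; as $F$ is a newform of level $K(N)$ (and, when $k=2$, since $\ov{\sigma}_F^{\rm ss}$, hence $\pi_\ell$, is ramified) the conductor exponent is exactly $1$. Because $F$ is of type $(G)$, by \cite{Schmidt16} the representation $\pi$ transfers to a cuspidal self-dual representation of $\GL_4(\bfA)$, so $\pi_\ell$ lies in a tempered, generic $L$-packet. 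I would now invoke the Roberts--Schmidt classification of paramodular conductors: the only generic irreducible admissible representation of $\GSp_4(\bfQ_\ell)$ with paramodular conductor exponent $1$ is of type IIa, namely $\chi\,\mathrm{St}_{\GL_2}\rtimes\sigma$ with $\chi,\sigma$ unramified. Hence the associated Weil--Deligne parameter $\varphi_\ell$ (which depends only on the $L$-packet) has \emph{unramified} semisimple part, and its monodromy operator $N$ is a rank-one nilpotent with $N^2=0$ --- indeed this is precisely the Weil--Deligne representation whose Artin conductor exponent equals $0+(4-3)=1$ --- and in particular $\varphi_\ell$ is tamely ramified (trivial Swan conductor).

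Next I would apply local--global compatibility at $\ell$. For $F$ of type $(G)$, $\sigma_F$ is, up to a twist by a power of $\epsilon$ (which is unramified at $\ell\neq p$), the Galois representation attached to the $\GL_4$-transfer of $\pi$; by local--global compatibility for $\GL_4$ (Harris--Taylor, Henniart) together with the compatibility of the transfer, $\sigma_F|_{W_{\bfQ_\ell}}\cong\varphi_\ell$ as Weil--Deligne representations. For $k\geq 3$ this is contained in the construction of $\sigma_F$ (cf.\ \cite{Mok14,Jorza12}); for $k=2$ one uses here that $\sigma_F$ is absolutely irreducible --- so that the representation, produced by a congruence argument, is well defined and the compatibility applies --- and that $\ov{\sigma}_F^{\rm ss}$ is ramified at $\ell$ --- so that $\sigma_F$ is genuinely ramified at $\ell$ and its monodromy does not degenerate. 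From $\sigma_F|_{W_{\bfQ_\ell}}\cong\varphi_\ell$ it follows that $\sigma_F|_{I_\ell}$ has unipotent image, and by Grothendieck's monodromy theorem together with the vanishing of the Swan conductor, $\sigma_F(g)=\exp\bigl(t_p(g)\,N\bigr)$ for all $g\in I_\ell$, where $N$ is a rank-one nilpotent endomorphism of $E^4$ with $N^2=0$. Every such $N$ is $\GL_4(E)$-conjugate to $N_1$, so after conjugating we obtain $\sigma_F|_{I_\ell}\cong\exp(t_pN_1)$, as claimed.

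The hard part will be the local--global compatibility at the bad primes $\ell\mid N$, especially in weight $2$, where $\sigma_F$ is not cut directly from the cohomology of a Shimura variety but obtained by a limiting/congruence argument; it is precisely in order to control the full monodromy operator (rather than merely the Frobenius-semisimplified parameter) in this case that the hypotheses ``$\sigma_F$ absolutely irreducible'' and ``$\ov{\sigma}_F^{\rm ss}$ ramified at $\ell$'' are imposed. The remaining ingredient --- identifying type IIa as the unique generic paramodular representation of conductor exponent $1$ --- is a routine consultation of the Roberts--Schmidt tables.
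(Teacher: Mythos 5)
For $k>2$ your argument is essentially the paper's: genericity of the local components of a type $(G)$ paramodular newform of squarefree level forces the local parameter to have unramified, rank-one unipotent monodromy (you get this from the Roberts--Schmidt classification of conductor-exponent-one generic representations as type IIa; the paper cites Sorensen's Corollary 1 for the same content), and Mok's local--global compatibility up to Frobenius semisimplification transports the monodromy operator to $\sigma_F|_{I_\ell}$. That part is fine.

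The genuine gap is in the case $k=2$. There the only local--global compatibility available at $\ell\mid N$ from the construction of $\sigma_F$ (Mok, Theorem 4.14) is \emph{up to semisimplification}, i.e.\ it says nothing about the monodromy operator of $\sigma_F$ at $\ell$. Your proposal asserts that absolute irreducibility of $\sigma_F$ and ramification of $\ov{\sigma}_F^{\rm ss}$ make ``the compatibility apply,'' but this is exactly the step that has no cited source and no argument behind it: those hypotheses do not, by themselves, upgrade a semisimplified compatibility to a statement about $N$. The paper closes this gap with a concrete limiting argument that you would need to supply: by Mok--Tan one can $p$-adically approximate the Hecke eigenvalues of the weight-$2$ form $F$ by those of cohomological-weight eigenforms $F_n$; trace convergence of $\sigma_{F_n}$ to the absolutely irreducible $\sigma_F$ implies \emph{physical} convergence of matrix entries in suitable bases (Bella\"iche--Chenevier--Khare--Larsen, Theorem 1.2), and the $\sigma_{F_n}$ are eventually absolutely irreducible, hence the $F_n$ are eventually of type $(G)$ and of weight $>2$, so $\sigma_{F_n}|_{I_\ell}\cong\exp(t_pN_1)$ by the first case. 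Passing to the limit, the inertial monodromy of $\sigma_F$ at $\ell$ has rank at most $1$, and it has rank exactly $1$ precisely because $\ov{\sigma}_F^{\rm ss}$ is assumed ramified at $\ell$ (this is where that hypothesis is actually used --- to prevent the monodromy from degenerating to $0$ in the limit, not to ``make compatibility apply''). Without some argument of this kind your weight-$2$ case does not go through.
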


\begin{proof}
For $k>2$ this is proved by \cite{Mok14} Theorem 3.5 (local-global compatibility up to Frobenius semisimplification) and \cite{Sorensen10} Corollary 1 (monodromy rank 1). (We know by \cite{Schmidt16} Proposition 1.2.1 that type $(G)$ paramodular eigenforms give rise to generic local representations, so that Sorenson's result applies.)

As for $k=2$ \cite{Mok14} Theorem 4.14 only proves local-global compatibility up to semisimplification we argue as follows: Theorem 1.1 of \cite{MokTan15} allows us to $p$-adically approximate the Hecke eigenvalues of a weight $2$ form $F$ by those of cohomological Hecke eigenforms $F_n$. This means that the corresponding Galois representations $\sigma_{F_n}$ are trace convergent to $\sigma_F$ in the sense of  \cite{BCKL05}, i.e. $\tr(\sigma_{F_n})(g)$ converges to  $\tr(\sigma_{F})(g)$ in ${\bfC}_p$ for all $g \in G_{\bfQ}$. As we assume that $\sigma_F$ is absolutely irreducible we can invoke Theorem 1.2 of \cite{BCKL05} to deduce that  $\sigma_{F_N}$ are physically convergent to $\sigma_F$, i.e. such that there exist $\bfC_p$-bases  for $\sigma_{F_n}$ and $\rho$ such that the matrix entries of $\sigma_{F_n}$ converge to the corresponding entries of $\sigma_{F}$. This result also tells us that the $\sigma_{F_n}$ are absolutely irreducible for $n \gg 0$ so we know that the $F_n$ are eventually of type $(G)$. As these are of weight $k>2$
we know that $\sigma_{F_n}|_{I_{\ell}} \cong \exp(t_p N_1)$ and the rank of the monodromy remains 1 in the limit by our assumption that $\ov{\sigma}_F^{\rm ss}$ is ramified.
\end{proof}

\begin{thm} \label{paramodularity} Let $A$ and $\rho=\ov{\rho}_f$ be as above and assume that $p \nmid 1+w_{f, \ell} \ell$ for all $\ell \mid N$, $R_{\rho}=\Oo$, $\#H^1_f(\bfQ,\ov{\rho}_f )= \#\bfF$ and $\#H^1_f(\bfQ, \rho_f(-1) \otimes E/\Oo)\leq \#\bfF$. 

Furthermore suppose that there exists
 $F \in \mS_2(N)^{\rm para}$ such that $\lambda_{\ell, 2}(F) \equiv 1 + \ell + a_{\ell}(f)$ (mod $p$) for all primes $\ell \nmid Np$ and such that its Hecke polynomial at some prime $\ell \nmid Np$  vanishes neither at $1$ nor at $1/\ell$. We also assume that  $F$ has pairwise distinct roots for the Hecke polynomial  $Q_p(F)$. 
 Then $\sigma_A \cong \sigma_F$. In particular, $A$ is  paramodular of level $N$. \end{thm}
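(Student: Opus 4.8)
The strategy is to verify the hypotheses of Theorem~\ref{dvrness} for $\sigma_A$, conclude that $R^{\rm red}_{\ov\sigma}\cong\Oo$, and then show that $\sigma_F$ is itself a deformation of the same $\ov\sigma$, so that the resulting uniqueness forces $\sigma_A\cong\sigma_F$. For the input data: $\sigma_A=V_p(A)$ is absolutely irreducible, short crystalline at $p$, satisfies $\sigma_A|_{I_\ell}\cong\exp(t_pN_1)$ for $\ell\mid N$, has $\det\sigma_A=\epsilon^2=\epsilon^{4k-6}$ (with $k=2$), and $\ov\sigma_A^{\rm ss}=1\oplus\rho\oplus\chi$ with $\rho=\ov\rho_f$ absolutely irreducible and ramified at each $\ell\mid N$ -- all recorded before the theorem statement -- and the degree-prime-to-$p$ polarization yields a perfect pairing $\sigma_A\times\sigma_A\to\bfQ_p(1)$, i.e.\ $\sigma_A^\vee\cong\sigma_A(-1)=\sigma_A(3-2k)$, so $\sigma_A$ is $\tau$-self-dual. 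The Selmer-group hypotheses of Theorem~\ref{dvrness} hold: $\dim_\bfF H^1_f(\bfQ,\rho)=\dim_\bfF H^1_f(\bfQ,\ov\rho_f)=1$ is the assumption $\#H^1_f(\bfQ,\ov\rho_f)=\#\bfF$; and the assumption $\#H^1_f(\bfQ,\rho_f(-1)\otimes E/\Oo)\le\#\bfF$, together with the automatic lower bound $\#H^1_f(\bfQ,\rho_f(-1)\otimes E/\Oo)\ge\#\bfF$ coming from a lattice in $\sigma_A$ (Corollary~\ref{latt}, Corollary~\ref{splitting at ell} and Proposition~\ref{torsion coeffs}(1)), forces $\dim_\bfF H^1_f(\bfQ,\rho(-1))=1$; finally $R_\rho=\Oo$ and $p\nmid1+w_{f,\ell}\ell$ are assumed.

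Next I would establish the properties of $\sigma_F$. By Theorem~\ref{Gal rep for Sieg} the characteristic polynomial of $\Frob_\ell$ on $\sigma_F$ ($\ell\nmid Np$) is determined by $\lambda_{\ell,2}(F)$ and $\lambda_{\ell,0}(F)=\ell^{2k-6}$, so the congruence $\lambda_{\ell,2}(F)\equiv1+\ell+a_\ell(f)\pmod p$ makes it agree mod $\varpi$ with the characteristic polynomial of $\Frob_\ell$ on $1\oplus\rho\oplus\chi$; since $p\ge5>4$, equality of the traces of all $\Frob_\ell$ determines the semisimplification, giving $\ov\sigma_F^{\rm ss}\cong1\oplus\rho\oplus\chi$. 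The key point is that $\sigma_F$ is absolutely irreducible. A one-dimensional constituent of $\sigma_F^{\rm ss}$ would be crystalline at $p$, unramified outside $p$ (a character with unipotent inertia is trivial on inertia), of Hodge--Tate weight $0$ or $1$, hence equal to $1$ or $\epsilon$, and so $\Frob_{\ell_0}$ would have eigenvalue $1$ or $\ell_0$, i.e.\ the Hecke polynomial $Q_{\ell_0}$ would vanish at $1$ or at $1/\ell_0$, contrary to hypothesis; a splitting $\sigma_F=\sigma_1\oplus\sigma_2$ into two two-dimensional pieces is excluded because, $N$ being squarefree and $\rho$ ramified at every $\ell\mid N$, the piece reducing to $\rho$ must carry all of the rank-one monodromy, leaving the other piece unramified outside $p$, hence (by the classification of automorphic representations of $\GSp_4$) the Galois representation of a weight-two newform of level one -- impossible since $S_2(\SL_2(\bfZ))=0$. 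With absolute irreducibility in hand, Theorem~\ref{Jorza} (using the distinct roots of $Q_p(F)$) gives that $\sigma_F$ is short crystalline at $p$, Proposition~\ref{prop8.4} gives $\sigma_F|_{I_\ell}\cong\exp(t_pN_1)$ for $\ell\mid N$ (the type-$(G)$ hypothesis following from absolute irreducibility of $\sigma_F$ and the surjectivity excluding an induced/endoscopic possibility, and $\ov\sigma_F^{\rm ss}$ being ramified at $\ell$ because $\rho$ is), $\sigma_F^\vee\cong\sigma_F\otimes\epsilon^{3-2k}$ makes $\sigma_F$ $\tau$-self-dual, and $\det\sigma_F=\epsilon^2$.

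Now apply Corollary~\ref{corlat} to the pair $(\sigma_A,\sigma_F)$: it produces $G_\Sigma$-stable lattices $\Lambda\subset\sigma_A$ and $\Lambda'\subset\sigma_F$ with $\ov{\sigma_A}_\Lambda=\bsmat 1&a&b\\&\rho&c\\&&\chi\esmat$ and $\ov{\sigma_F}_{\Lambda'}=\bsmat 1&a&b'\\&\rho&c\\&&\chi\esmat$, both short crystalline and semi-abelian, with $a,c$ nonzero in $H^1_f(\bfQ,\rho(-1))$ (Corollary~\ref{splitting at ell}). Put $\ov\sigma:=\ov{\sigma_A}_\Lambda$; Proposition~\ref{uni1} then yields $\ov{\sigma_F}_{\Lambda'}\cong\ov\sigma$, so $\sigma_F$ (with the lattice $\Lambda'$) is a short crystalline, minimal, $\tau$-self-dual deformation of $\ov\sigma$ to $\GL_4(\Oo)$ of determinant $\epsilon^{4k-6}$, and $\sigma_A$ (with $\Lambda$) is such a deformation as well. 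By Theorem~\ref{dvrness} applied with this $\ov\sigma$ and $\sigma=\sigma_A$, the map $R^{\rm red}_{\ov\sigma}\to\Oo$ induced by $\sigma_A$ is an isomorphism, so $\ov\sigma$ admits, up to strict equivalence, a unique short crystalline, minimal, $\tau$-self-dual deformation to $\GL_4(\Oo)$. Hence $\sigma_F$ is strictly equivalent to $\sigma_A$, i.e.\ $\sigma_A\otimes E\cong\sigma_F$; since $\sigma_A$ is defined over $\bfQ_p$ and both representations are semisimple, the traces $\lambda_{\ell,2}(F)=\tr\sigma_F(\Frob_\ell)$ lie in $\bfQ_p$ (indeed in $\bfZ$), so $\sigma_F$ descends to $\bfQ_p$ and $V_p(A)\cong\sigma_F$ as $\bfQ_p[G_\Sigma]$-modules. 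In particular the $L$-series of $A$ and $F$ agree away from $Np$ and $A$ is paramodular of level $N$.

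I expect the main obstacle to be the absolute irreducibility of $\sigma_F$ -- concretely, excluding the endoscopic decomposition $\sigma_F=\sigma_1\oplus\sigma_2$ -- which is precisely where the non-vanishing of $Q_{\ell_0}$ at $1$ and $1/\ell_0$ and the global input on $\GSp_4$-automorphic representations are needed; once $\sigma_F$ is pinned down, the remaining steps are a bookkeeping application of the machinery of Sections~\ref{Lattice}--\ref{Cyclicity}.
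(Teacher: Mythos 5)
Your overall architecture matches the paper's: establish $\ov{\sigma}_F^{\rm ss}\cong 1\oplus\rho\oplus\chi$ by Tchebotarev and Brauer--Nesbitt, prove $\sigma_F$ absolutely irreducible, produce compatible lattices via Corollary \ref{corlat}, identify the two residual representations via Proposition \ref{uni1}, and conclude from Theorem \ref{dvrness} that $\sigma_A$ and $\sigma_F$ both realize the unique $\Oo$-point of $R^{\rm red}$. Your verification of the hypotheses of Theorem \ref{dvrness} and your exclusion of a one-dimensional constituent of $\sigma_F$ (via the non-vanishing of the Hecke polynomial at $1$ and $1/\ell$) are essentially the paper's arguments.

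The gap is in your exclusion of a decomposition $\sigma_F=\sigma_1\oplus\sigma_2$ into two two-dimensional irreducibles. First, your appeal to ``rank-one monodromy'' at $\ell\mid N$ is circular: for $k=2$ the statement $\sigma_F|_{I_\ell}\cong\exp(t_pN_1)$ is Proposition \ref{prop8.4}, whose proof (trace convergence plus \cite{BCKL05}) takes the absolute irreducibility of $\sigma_F$ as a hypothesis; at this stage one only has local-global compatibility up to semisimplification, which says nothing about the monodromy operator, so you cannot conclude that $\sigma_2$ is unramified outside $p$. Second, even granting that, the assertion that $\sigma_2$ ``is the Galois representation of a weight-two newform of level one'' is unjustified: it presupposes that the Galois-theoretic splitting reflects an endoscopic (Yoshida-type) structure of the automorphic representation $\pi$ attached to $F$, which is precisely the nontrivial automorphic input one must supply. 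The paper argues differently: from $\tau$-self-duality and the residual data one gets $\sigma_1^\vee\not\cong\sigma_2\otimes\epsilon^{-1}$, hence $\sigma_i^\vee\cong\sigma_i\otimes\epsilon^{-1}$ and $\det\sigma_1=\det\sigma_2=\epsilon$, so both $\sigma_i$ are odd; Ramakrishnan's Theorem C, applied to the isobaric transfer of $\pi$ to $\GL_4$, gives $L^S(\sigma_1^{\vee}\otimes\sigma_2,1)\neq 0$, so the standard $L$-function $L^S(s,\pi,{\rm std})=\zeta^S(s)L^S(\sigma_1^{\vee}\otimes\sigma_2,s)$ has a pole at $s=1$; by Kudla--Rallis--Soudry this forces $\pi$ to be endoscopic, contradicting Schmidt's Lemma 2.2.1 for paramodular eigenforms. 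Without this (or an equivalent) chain, the two-by-two case is not excluded and the irreducibility of $\sigma_F$ --- which you yourself identify as the crux --- remains unproved.
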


\begin{proof} The congruence,  Tchebotarev's density Theorem and the Brauer-Nesbitt Theorem imply that the semisimplification $\ov{\sigma}_F^{\rm ss}$ of the  mod $p$ reduction  of $\sigma_F$ has the form $\ov{\sigma}_F^{\rm ss}=1 \oplus \ov{\rho}_f \oplus \chi=\ov{\sigma}^{\rm ss}_A$. 

We will show below that $\sigma_F$ is absolutely irreducible. Using Corollary \ref{corlat} (and the fact that $\sigma_F$ is short crystalline at $p$ - cf. Theorem \ref{Jorza}) we can then choose  $G_{\Sigma}$-stable lattices in  $\sigma_A$ and in the representation space of $\sigma_F$  so that with respect to these lattices $\ov{\sigma}_A$ and $\ov{\sigma}_F$ have the form as in Corollary \ref{corlat}. Then by Proposition \ref{uni1} we get that in fact $\ov{\sigma}_A \cong \ov{\sigma}_F$. By adjusting the basis of $\sigma_F$ if necessary we can assume that $\ov{\sigma}_A=\ov{\sigma}_F$, hence  we obtain that $\sigma_F$ is a deformation of $\ov{\sigma}_A$. Let $R$ be the quotient of the universal deformation ring of $\ov{\sigma}_A$ as in Proposition \ref{reprsd}. Then by Theorem \ref{dvrness} we see that $\sigma_A$ and $\sigma_F$ both give rise to an isomorphism $R^{\rm red} \cong \Oo$ of $\Oo$-algebras.  Hence we get a commutative diagram $$\xymatrix{\Oo \ar[ddr]_{\textup{id}}\ar[dr]\ar[drr]^{\textup{id}}\\ & R^{\rm red} \ar[d]^{\sigma_F} \ar[r]_{\sigma_A}^{\sim} & \Oo \\ & \Oo}$$ from which we see that $\sigma_A=\sigma_F$ as maps from $R^{\rm red}$ to $\Oo$, which implies that the representations $\sigma_F$ and $\sigma_A$ are isomorphic since they are both composites of the universal deformation with the map $R^{\rm red} \to \Oo$. 

Let us now show that $\sigma_F$ is absolutely irreducible.  Indeed, note that $\sigma_F$ cannot be the sum of 4 characters because its reduction has an absolutely irreducible two-dimensional component $\ov{\rho}_f$. Suppose that $\sigma_F^{\rm ss}$ splits as a direct sum of two irreducible 2-dimensional representations $\sigma_1 \oplus \sigma_2$. Note that $\sigma_1^{\vee} \not \cong \sigma_2 \otimes \epsilon^{-1}$ since without loss of generality $\ov{\sigma}_1=\rho \neq \ov{\sigma}_2^{\rm ss}=1 \oplus \chi$. We therefore have $\sigma_1^{\vee} \cong \sigma_1 \otimes \epsilon^{-1}$, which implies $\det(\sigma_1)=\det(\sigma_2)=\epsilon$. Indeed we must have $\det(\sigma_1)=\epsilon \phi$ for some quadratic character $\phi$ reducing to the identity mod $p$ since $\det(\ov{\sigma}_1)=\chi$. Since $p>2$  the character $\phi$ has to be trivial. 

We can therefore argue as in case (v) on p. 46 of \cite{SkinnerUrban06}. Note that $\sigma_1$ has to be odd since $\rho$ is, so $\sigma_2$ must be odd as well. Since the automorphic representation $\pi$ corresponding to $F$ can be transferred to an isobaric automorphic representation on $\GL_4$ the assumptions in Theorem C of \cite{Ramakrishnan13} are satisfied, which tells us that $L^S(\sigma_1^{\vee} \otimes \sigma_2,1) \neq 0$ for some finite set of places $S$. This means that the standard $L$-function $L^S(s,\pi, {\rm std})=\zeta^S(s) L^S(\sigma_1^{\vee} \otimes \sigma_2,s)$ has a pole at $s=1$, which by \cite{KudlaRallisSoudry92}  implies that $\pi$ is endoscopic, i.e. corresponds to type $(Y)$ in \cite{Schmidt16}. Since $F$ is paramodular this contradicts \cite{Schmidt16} Lemma 2.2.1. 
This excludes the case $\sigma_A=\sigma_1 \oplus \sigma_2$.

 It remains to show that $\sigma_A$ cannot split as $\sigma_1 \oplus \chi_1$ with $\sigma_1$ a 3-dimensional representation and $\chi_1$ a character. If it did, then the spin $L$-function of $A$ would have a linear factor. Since the only short crystalline, minimal deformations of $1$ (resp. $\chi$) are $1$ (resp. $\epsilon$) - cf. the beginning of the proof of Lemma \ref{diag pieces} - we must have that $\chi_1$ is either $1$ or $\epsilon$. Thus the linear factor of the local spin $L$-function must be $1-\ell^{-s}$ (if $\chi_1=1$) or $1-\epsilon(\ell)\ell^{-s}=1-\ell \ell^{-s}$. But the local spin $L$-factor is the local Hecke polynomial at $\ell$ with $X$ replaced by $\ell^{-s}$, so we're done by our assumption.
  \end{proof}

\section{Examples} \label{Examples}

In this section we work out in detail how our result  proves paramodularity of an abelian surface of conductor 731. We also discuss  other cases where the result may be applicable without going into details.

\subsection{Conductor $N=731$}

 We will show that  for an abelian surface of this conductor  (using a congruence result proved in the Appendix) the conditions for Theorem \ref{paramodularity} are satisfied. This establishes a new case of the Paramodular Conjecture.

For $N=731=17\cdot 43$
there exists an abelian surface $A$ of conductor $N$, which arises as the Jacobian of the hyperelliptic curve with equation $$y^2+(x^3+x^2)y=x^5+2x^4-x-3.$$ According to LMFDB \cite{lmfdb:731.a.12427.1} it has semi-abelian reduction at 17 and 43 and has a rational point of order $p=5$. 
We first note that $5 \nmid 1 \pm \ell$ for $\ell=17,43$.

Since $A$ has rational 5-torsion  and is principally polarised  we know that $\ov{\sigma}_{A,p}^{\rm ss}=1 \oplus \rho \oplus \chi$. Furthermore $A$ has semi-abelian reduction and has Tamagawa numbers at 17 and 43 not divisible by $5$, 
which tells us that if $\rho$ were reducible  over any finite extension of $\bfQ_p$ its semisimplification would be unramified away from 5, i.e. of the form $\chi^i \oplus \chi^{1-i} \mod{5}$ for $i=1$ or $2$. We check  on specific Frobenius elements that the trace of $\rho$ is not equal to $\chi^i + \chi^{1-i}$ for $i=1,2$ so that $\rho$ has to be absolutely irreducible. Serre's conjecture implies that $\rho$ is modular by a  cuspidal weight 2 eigenform of level 17, 43 or 731 (see section \ref{Application to the Paramodular Conjecture}).  Using MAGMA \cite{MAGMA97} we check (by comparing the linear term of the Euler factor  of $A$ at $\ell$ with $1+\ell+a_{\ell}(f)$) that the only newform giving rise to $\rho$ is the modular form corresponding to the elliptic curve $E$ (Cremona label 731a1) of conductor 731 given by $$y^2+xy+y=x^3-539x+4765,$$ which has rank 1 and trivial rational torsion. This also shows that 5 is not a congruence prime for the corresponding weight 2 modular form, i.e. that $R_{\rho}=\bfZ_p$.
By consulting LMFDB \cite{lmfdb:731.a1} we know that $\ov{\rho}_{E,p}$ is  ramified at 17 and 43 (since $E$ has non-split reduction at both primes). We also note that $E$ has good ordinary reduction at $5$.

We can check the Selmer group conditions as follows: We have 
$H^1_f(\bfQ, \ov{\rho}_{E,p}) = H^1_f(\bfQ, {\rho}_{E,p} \otimes \bfQ_p/\bfZ_p)[p]={\rm Sel}_p(E)[p]$ where the first equality follows from 
Proposition \ref{torsion coeffs} and for the second one see e.g. \cite{Rubin00} Proposition 1.6.8. Here ${\rm Sel}_p(E)$ is defined by 
\be \label{Selmer sequence} 0 \to E(\bfQ)\otimes \bfQ_p/\bfZ_p \to {\rm Sel}_p(E) \to \Sh(E)[p^{\infty}] \to 0.\ee Using snake lemma on the diagram:
$$\xymatrix{0 \ar[r] &E(\bfQ)\otimes \bfQ_p/\bfZ_p \ar[r]\ar[d]^{\cdot p}& {\rm Sel}_p(E) \ar[r]\ar[d]^{\cdot p}& \Sh(E)[p^{\infty}] \ar[r]\ar[d]^{\cdot p} &0 \\
0 \ar[r] &E(\bfQ)\otimes \bfQ_p/\bfZ_p \ar[r] & {\rm Sel}_p(E) \ar[r]& \Sh(E)[p^{\infty}] \ar[r] &0}$$ we get an exact sequence  $$0 \to  E(\bfQ)\otimes \bfQ_p/\bfZ_p[p] \to {\rm Sel}_p(E) [p] \to \Sh(E)[p] \to 0.$$ Since $\ov{\rho}_{E, p}$ is absolutely irreducible, we conclude that $E$ has no rational $p$-torsion and hence $(E(\bfQ)\otimes \bfQ_p/\bfZ_p)[p] = E(\bfQ)/pE(\bfQ)$. This implies that \eqref{Selmer sequence} gives rise to a short exact sequence $$0 \to E(\bfQ)/p E(\bfQ) \to  H^1_f(\bfQ, \ov{\rho}_{E,p})\to \Sh(E)[p] \to 0,$$

Since the rank of $E$ is 1 we therefore deduce that $\#H^1_f(\bfQ, \ov{\rho}_{E,p})=5$ if $ \Sh(E)[5]=0$. According to LMFDB the analytic order of $\Sh(E)$ is 1. By Skinner et al. \cite{JSW17} the $p$-part of the BSD formula is satisfied, so $ \Sh(E)[5]=0$  (alternatively this was checked by  Grigorov, Jorza, Patrikis, Stein, and Tarnita, see Theorem 3.27 in \cite{GJPST09}). 

It now remains to bound the order of  $\#H^1_f(\bfQ, {\rho}_{E,p}(-1) \otimes \bfQ_p/\bfZ_p)$ (which we know to be non-trivial) from above by $5$. 
Setting $f \in S_2(\Gamma_0(731))$  in Proposition \ref{prop3.8} to be the newform corresponding to $E$ it is enough to calculate $p$-valuation of the $p$-adic $L$-series $L(E,\omega^{-1})\in \bfZ_p[[T]]$ specialized at $T=p$. 
Using SAGE we confirm that this $p$-valuation is indeed 1.

As proved in the Appendix there is a unique non-lift paramodular  eigenform $F$ new at level 731 (non-vanishing modulo $5$  and with Hecke eigenvalue $\lambda_{5,2}(F)=0$) that is congruent modulo $5$ to a Gritsenko lift (paramodular Saito-Kurokawa lift) of a modular form of level dividing 731. 
This congruence of Fourier expansions implies a congruence of Hecke eigenvalues modulo $5$. 

Using MAGMA we can rule out that $F$ is congruent to all but the Saito-Kurokawa lift of the rational modular form $f$ corresponding to $E$. 
Since $\lambda_{5,2}(F)=0$ its Hecke polynomial at $5$ is of the form $1+cT^2+25T^4$ for some $c \in \bfZ$, which either has distinct roots or more than one repeated root. As $a_5(f) \equiv -1 \mod{5}$ the Hecke polynomial of the Saito-Kurokawa lift of $f$ (given by $(1-\alpha T)(1- \beta T) (1-T) (1-5T)$ with $\alpha+\beta=a_5(f)$) is congruent to $1-T^2$ modulo $5$, so we deduce that the roots of the Hecke polynomial of $F$ are distinct modulo 5. 

Theorem \ref{appendixtheorem}(4) in the appendix also shows that the Hecke polynomial at 2 is given by $4 T^4 + 2 T^3 + 2 T^2 + T + 1= (1 - T + 2 T^2) (1 + 2 T + 2 T^2)$.  Its complex roots  do not include $1, 2$ or $1/2$.

\subsection{Other examples} \label{s9.2}
Poor and Yuen have found candidate paramodular forms for some other examples of abelian surfaces. For the seven conductors in Table 5 of \cite{PoorYuen15} one can check that only the abelian surface of conductor $N=277$ involves a congruence (for the torsion prime $p=3$, but not for $p=5$) with the paramodular Saito-Kurokawa lift of a modular form corresponding to an elliptic curve. 
In this case the $p$-valuation of the $p$-adic $L$-series gives us a bound of $9$ on $\#H^1_f(\bfQ, \rho_f(-1) \otimes E/\Oo)$. It may still be possible that the Bloch-Kato Selmer group has order 3, but we were not able to confirm this (and this would fall outside of the range of applicability of our results as we exclude $p=3$).  However, this abelian surface has been proved to be paramodular by Brumer et al. using the Faltings-Serre method, but a proof has not yet  appeared in print. 

When the conjectured congruence is with the Saito-Kurokawa lift of a modular form with non-rational Fourier coefficients it is sometimes possible to check that $R_{\ov{\rho}_f}$ is a discrete valuation ring.  This is the case for the examples of conductor $N=349, 353$ and $389$ on the list in \cite{PoorYuen15}. These surfaces have rational $p$-torsion for $p=13, 11$ and $5$, respectively.  We have, however,  not tried to check the Selmer group assumptions for these cases. 

Another example we want to highlight is that of conductor $N=997$, where there exists an abelian surface with $3$-torsion. In this case the expected congruence is between a paramodular non-lift and a Saito-Kurokawa lift of a modular form corresponding to a rank 2 elliptic curve. As the root number of the modular form is $+1$ this Saito-Kurokawa lift has to be of congruence level, rather than paramodular as the other cases.
It would be interesting to confirm the existence of a  matching  paramodular non-lift Siegel modular form in this case, but Theorem \ref{paramodularity} does not apply because  $p=3$ and the elliptic curve rank >1 should imply $\#H^1_f(\bfQ, \ov{\rho}_{E,p})>3$.

\section{Modularity theorem for cohomological weights $k$} \label{Modularity theorem}
In this section we will prove a modularity theorem in cases $k>2$  and $N=1$. The sole reason for these two restrictions is that at present only in this case we have a result (which is due to Brown) providing us with enough congruences among Siegel modular forms \cite{Brown11}. This result also allows us to replace the assumption that $\#H^1_f(\bfQ, \rho_f(1-k)\otimes E/\Oo) \leq \#\bfF$ with the weaker assumption that $\dim_{\bfF} H^1_f(\bfQ, \rho_f(1-k))=1$. In particular, the proof does not proceed via proving that $R^{\rm red}$ is a discrete valuation ring - in fact the latter property is not implied by our $R=T$ theorem. We again include all the assumptions to make the statement self-contained, however to make their use more transparent we will separate the assumptions that are necessary for Brown's congruence result (collecting them below as Assumption \ref{Jim1} - cf. Theorem 5.4 and Corollary 5.6 in \cite{Brown11}) from the assumptions required on the deformation side (which will be spelled out in the statement of Theorem \ref{modularity}). 

Let $k$ be a positive integer and $p$ a prime such that $p > 2k-2$. 
\begin{assumption} \label{Jim1} Suppose $k>9$ is even. Assume also that there exists $N \in \bfZ_{>1}$ and a fundamental discriminant $D<0$ such that $\chi_D(-1)=-1$ and $p \nmid N D [\Sp_4(\bfZ):\Gamma_0^{(2)}(N)]$.  Here $\Gamma_0^{(2)}(N)$ is the congruence subgroup of $\Sp_4(\bfZ)$ of level $N$. Let $f \in S_{2k-2}(1)$ be a $p$-ordinary newform. Also assume that there exists a Dirichlet character $\psi$ of conductor $N$ such that $$\val_{\varpi}(L^{\Sigma}(3-k, \psi) L^{\rm alg}(k-1, f, \chi_D) L^{\rm alg}(1,f,\psi) L^{\rm alg}(2,f,\psi))=0.$$ For precise definitions of the $L$-factors, cf. \cite{Brown11}, section 5.
\end{assumption}

\begin{thm} \label{modularity} Assume Assumption \ref{Jim1}.
Let
$\rho_f : G_{\Sigma} \to \GL_2(E)$ be the Galois representation attached to the newform $f$ and write $\rho$ for the residual representation $\ov{\rho}_f$, which we assume to be absolutely irreducible. Here $\Sigma= \{p\}$. Assume that $$\dim_{\bfF}H^1_f(\bfQ, \rho(1-k))=\dim_{\bfF}H^1_f(\bfQ, \rho(2-k))=1.$$   Furthermore assume that $R_{\rho}$ is a dvr. 
Let $\sigma: G_{\Sigma} \to \GL_4(E)$ be an absolutely irreducible short crystalline Galois representation with $\det \sigma = \epsilon^{4k-6}$, satisfying $\sigma^{\vee} \cong \sigma(3-2k)$  and such that its residual representation $\ov{\sigma}$ (defined only up to semisimplification) satisfies $$\ov{\sigma}^{\rm ss} \cong \chi^{k-2} \oplus \rho \oplus \chi^{k-1}.$$  Then $\sigma$ is modular, i.e., there exists a cuspidal non-lift Siegel modular eigenform $F$ of weight $k$, level one such that $L_{\rm spin}(s, F) = L(s, \sigma)$.  \end{thm}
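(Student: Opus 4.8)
The plan is to combine an $R=\mathbf{T}$ argument with the deformation-theoretic results of the earlier sections. First I would invoke Brown's congruence result (valid under Assumption \ref{Jim1}), which produces a cuspidal Siegel eigenform $G$ of weight $k$ and level one whose Hecke eigenvalues are congruent modulo $\varpi$ to those of the Saito-Kurokawa lift $\mathrm{SK}(f)$; by Theorem \ref{Gal rep for Sieg} and Brauer-Nesbitt this gives $\overline{\sigma}_G^{\mathrm{ss}} \cong \chi^{k-2}\oplus\rho\oplus\chi^{k-1} = \overline{\sigma}^{\mathrm{ss}}$. I would then check that $G$ is necessarily a non-lift: any Saito-Kurokawa lift of level one would force $\sigma_G$ to be reducible, whereas the argument for absolute irreducibility (as in the proof of Theorem \ref{paramodularity}, using Ramakrishnan's nonvanishing result and the classification of paramodular/level-one automorphic types) rules this out once we know $H^1_f(\bfQ,\rho(1-k))$ is one-dimensional. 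By Theorem \ref{Jorza} (for $k>2$ no distinctness hypothesis on $Q_p$ is needed) $\sigma_G$ is short crystalline at $p$.

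Next I would apply Corollary \ref{corlat} to both $\sigma$ and $\sigma_G$: since $N=1$ the minimality condition is vacuous and $\Sigma=\{p\}$, and the hypothesis $\dim_{\bfF}H^1_f(\bfQ,\rho(1-k))=1$ lets us choose $G_{\Sigma}$-stable lattices so that $\overline{\sigma}$ and $\overline{\sigma}_G$ are both semi-abelian and of the common shape $\bmat \chi^{k-2} & a & b\\ & \rho & c\\ && \chi^{k-1}\emat$ with the same $a,c$ (and $a,c$ nonzero in $H^1_f(\bfQ,\rho(1-k))$). By Proposition \ref{uni1}, using that $\overline{\sigma}$ and $\overline{\sigma}_G$ are short crystalline and that $\dim_{\bfF}H^1_f(\bfQ,\rho(1-k))=1$, we conclude $\overline{\sigma}\cong\overline{\sigma}_G$. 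After conjugating we may take $\overline{\sigma}_G=\overline{\sigma}$, so both $\sigma$ and $\sigma_G$ are $\tau$-self-dual, short crystalline, minimal deformations of the fixed $\overline{\sigma}$; hence both factor through the universal ring $R=R_{\overline{\sigma}}$ of Proposition \ref{reprsd}, and through $R^{\mathrm{red}}$.

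Now I would run the structure theory of Sections \ref{The ideals of reducibility} and \ref{Cyclicity}. Theorem \ref{4} (whose hypotheses $\dim_{\bfF}H^1_f(\bfQ,\rho(2-k))=1$ and, for $N=1$, the vacuous condition $p\nmid 1+w_{f,\ell}\ell$, are in force) gives that all ideals of reducibility coincide with $I^{\mathrm{tot}}$ and that $I^{\mathrm{tot}}$ is principal. Corollary \ref{structure map} shows the structure map $\Oo\to R^{\mathrm{red}}/I^{\mathrm{tot}}$ is surjective, and Theorem \ref{Lvalue bound} applied with the bound $\#H^1_f(\bfQ,\rho_f(1-k)\otimes E/\Oo)$ — here I would instead use directly that $\dim_{\bfF}H^1_f(\bfQ,\rho(1-k))=1$ to bound $R^{\mathrm{red}}/I^{\mathrm{tot}}$, as the argument there only uses torsion control — so that $R^{\mathrm{red}}/I^{\mathrm{tot}}$ is a cyclic $\Oo$-module of length one over $\bfF$, i.e. $R^{\mathrm{red}}/I^{\mathrm{tot}}\cong\bfF$ or $\Oo$. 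Combined with principality of $I^{\mathrm{tot}}$ and the existence of the characteristic-zero point coming from $\sigma$ (or $\sigma_G$), the argument of Theorem \ref{dvrness} (via \cite{Calegari06} Lemma 3.4) shows $R^{\mathrm{red}}$ is a discrete valuation ring, necessarily $\cong\Oo$.

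Finally, both $\sigma$ and $\sigma_G$ give rise to $\Oo$-algebra surjections $R^{\mathrm{red}}\to\Oo$; since $R^{\mathrm{red}}\cong\Oo$ any such surjection is an isomorphism, and precomposing with the structure map $\Oo\to R^{\mathrm{red}}$ being the identity forces the two maps $\sigma,\sigma_G\colon R^{\mathrm{red}}\to\Oo$ to agree. As both representations are obtained by composing the universal deformation with the induced map $R^{\mathrm{red}}\to\Oo$, this gives $\sigma\cong\sigma_G$. Hence $L_{\mathrm{spin}}(s,G)=L(s,\sigma_G)=L(s,\sigma)$, proving $\sigma$ is modular by the non-lift eigenform $F:=G$. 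The main obstacle I anticipate is \emph{not} the $R=\mathbf{T}$ packaging — which is essentially Theorem \ref{dvrness} — but rather making sure all of Brown's hypotheses in Assumption \ref{Jim1} genuinely supply a non-lift $G$ with absolutely irreducible $\sigma_G$ whose residual representation is exactly the semi-abelian $\overline{\sigma}$ (not merely $\overline{\sigma}^{\mathrm{ss}}$); verifying absolute irreducibility of $\sigma_G$ in the level-one setting and checking that no degeneration to a sum $\sigma_1\oplus\sigma_2$ or $\sigma_1\oplus\chi_1$ occurs — using the one-dimensionality of the relevant Selmer group together with the automorphic input — is where the real work lies.
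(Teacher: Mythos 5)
Your packaging has a genuine gap at the decisive step. You propose to run Theorem \ref{Lvalue bound} ``using directly that $\dim_{\bfF}H^1_f(\bfQ,\rho(1-k))=1$ to bound $R^{\rm red}/I^{\rm tot}$, as the argument there only uses torsion control.'' This is false: the proof of Theorem \ref{Lvalue bound} takes a putative upper-triangular deformation to $\Oo/\varpi^s\Oo$ and extracts from its $*_3$-entry a class of order $\#\Oo/\varpi^s\Oo$ in the \emph{divisible} Selmer group $H^1_f(\bfQ,\tilde{\rho}(1-k)\otimes E/\Oo)$, so deriving a contradiction requires an upper bound on that entire group, not merely on its $\varpi$-torsion. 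The hypothesis $\dim_{\bfF}H^1_f(\bfQ,\rho(1-k))=1$ only says the divisible Selmer group is co-cyclic; its length could be any $s$, or infinite. Hence your conclusion that $R^{\rm red}/I^{\rm tot}$ has length one, and therefore that $R^{\rm red}$ is a dvr via the Calegari argument, does not follow; indeed the paper states explicitly at the start of Section \ref{Modularity theorem} that this theorem is \emph{not} proved by showing $R^{\rm red}$ is a discrete valuation ring, and that the dvr property is not implied by the result. (Principality of $I^{\rm tot}$ without $I^{\rm tot}=\fm_{R^{\rm red}}$ gives no principality of the maximal ideal.)

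The paper's actual proof replaces the dvr step by a numerical $R=\bfT$ criterion, and this is the ingredient your proposal is missing. Kato's Theorem 17.14 (via the control-theorem argument of Proposition \ref{prop3.8}, now at the critical twist) supplies the characteristic-zero bound $\#H^1_f(\bfQ,\rho_f(1-k)\otimes E/\Oo)\leq\#\Oo/L^{\rm alg}(k,f)$, which feeds into Theorem \ref{Lvalue bound} to give $\#R^{\rm red}/I^{\rm tot}\leq\#\Oo/L^{\rm alg}(k,f)$. On the automorphic side one introduces the localized Hecke algebra $\bfT$ acting on the eigenforms $F$ with $\ov{\sigma}_F^{\rm ss}\cong\ov{\sigma}^{\rm ss}$ and the ideal $J$ generated by the annihilator of $\SK(f)$; Brown's Corollary 5.6 gives the matching lower bound $\#\bfT/J\geq\#\Oo/L^{\rm alg}(k,f)$ (which also shows $\Phi\neq\emptyset$, i.e.\ produces the non-lift). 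The surjection $\ov{\phi}:R^{\rm red}/I^{\rm tot}\twoheadrightarrow\bfT/J$ (using Corollaries \ref{structure map} and \ref{traces}, and Proposition \ref{uni1} to identify residual representations as you do) is then squeezed into an isomorphism, and principality of $I^{\rm tot}$ (Theorem \ref{4}) together with the commutative-algebra criterion of \cite{BergerKlosin13}, Theorem 4.1, upgrades this to $R^{\rm red}\cong\bfT$; modularity of $\sigma$ follows since its map $R^{\rm red}\to\Oo$ factors through $\bfT$. To salvage your route you would have to either strengthen the hypothesis to $\#H^1_f(\bfQ,\rho_f(1-k)\otimes E/\Oo)\leq\#\bfF$ (reverting to Theorem \ref{dvrness}, i.e.\ proving a weaker theorem) or incorporate Kato's divisibility and the Hecke-side congruence count as above.
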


\begin{proof}  Let $\sigma$ be as above. Then by Corollary \ref{latt}, there is a $G_{\Sigma}$-stable lattice $\Lambda$ in the space of $\sigma$ such that $$\ov{\sigma} = \bmat \chi^{k-2} & a & b \\ & \rho & c \\ && \chi^{k-1}\emat$$ with $a$ and $c$ non-trivial classes in $H^1_f(\bfQ, \rho(1-k))$.  Let $R^{\rm red}$ denote, as before, the (reduced, self-dual) universal deformation ring of $\ov{\sigma}$.

Let $\Phi'$ denote the set of linearly independent Siegel modular eigenforms of weight $k$, level $1$ which are orthogonal to the subspace spanned by Saito-Kurokawa lifts. By Theorems \ref{Gal rep for Sieg} and \ref{Jorza} we  get that for every $F \in \Phi'$ there exists a Galois representation $$\sigma_F : G_{\Sigma} \to \GL_4(E)$$ which is  short crystalline, satisfies $\det \sigma_F=\epsilon^{4k-6}$ and is $\tau$-self-dual.  Let $\Phi \subset \Phi'$ be the subset consisting of forms $F$ such that $\ov{\sigma}_F^{\rm ss} \cong \chi^{k-1} \oplus \rho \oplus \chi^{k-2}$.   For $F \in \Phi$ the representation $\sigma_F$ is absolutely irreducible which can be proven as in \cite{Brown07} (cf. the arguments following the proof of Proposition 8.3). Write $\bfT$ for the quotient of $\bfT'_{\fm}$ acting on the space spanned by $\Phi$. Here $\bfT'$ denotes the Hecke algebra generated over $\Oo$ by the operators $T_{\ell, 0}$, $T_{\ell, 1}$ and  $T_{\ell,2}$ for all primes $\ell \neq p$ and $\bfT'_{\fm}$ denotes the completion of $\bfT'$ at the maximal ideal corresponding to $\Phi$.

Thanks to Kato's result (Theorem 17.14 in \cite{Kato04})  we can use an argument similar to that in the proof of Proposition \ref{prop3.8} to deduce that \be \label{Kato} \#H^1_f(\bfQ, \rho_f(1-k)\otimes E/\Oo)\leq \# \Oo/L^{\rm alg}(k, f).\ee
  Here  $L^{\rm alg}(k, f)$ is the algebraic part of the  $L$-function of $f$ (for precise normalization see \cite{Brown11}). For this we apply an analogue
of the control theorem (Theorem \ref{control}) for $\xi=\epsilon^{-m}$ with $m=k-1$ and the fact that  the $p$-adic $L$-function interpolates the classical $L$-value at critical points. 

 We will now demonstrate that $\Phi$ is non-empty by showing that $\bfT\neq 0$. 
Since the Selmer group on the left of \eqref{Kato} is non-trivial, we get that  $\val_{\varpi}(L^{\rm alg}(k, f))>0$.  Let $\SK(f)$ denote the Saito-Kurokawa lift of $f$. It is a Siegel modular eigenform of weight $k$ and level $1$.
 Let $J$ be the ideal of $\bfT$ generated by the image of the annihilator ${\rm Ann}_{\bfT'_{\fm}}(\SK(f))$ of $\SK(f)$ under the map $\bfT'_{\fm} \twoheadrightarrow \bfT$. Then it follows from Corollary 5.6 in \cite{Brown11} that $\#\bfT/J \geq \# \Oo/L^{\rm alg}(k, f)$. In particular $\Phi \neq \emptyset$.

For every $F\in \Phi$, we get by Corollary \ref{latt} that we may assume that $\ov{\sigma}_F = \bmat \chi^{k-2} & *_1 & *_2 \\ & \rho & *_3 \\ && \chi^{k-1}\emat$. Since $\sigma_F$ are short crystalline (by our assumption that $p > 2k-2$) Proposition \ref{uni1} then shows that we have $\ov{\sigma}_F \cong \ov{\sigma}$. We further get an $\Oo$-algebra map $$\phi: R^{\rm red} \to \prod_{F \in \Phi} \Oo\supset \bfT.$$
Clearly $\phi(R^{\rm red}) \supset \bfT$, but we in fact get that $\phi(R^{\rm red})=\bfT$ by Corollary \ref{traces}. 
The map $\phi$ descends to a map $\ov{\phi}: R^{\rm red}/I^{\rm tot} \twoheadrightarrow \bfT/J$. 
Now combining Corollary \ref{structure map} with Theorem \ref{Lvalue bound}, inequality \eqref{Kato} and the fact $\#\bfT/J \geq \# \Oo/L^{\rm alg}(k, f)$, we conclude that $\ov{\phi}$ is an isomorphism. Since $I^{\rm tot}$ is principal by Theorem \ref{4}, $\phi$ is also an isomorphism by the commutative algebra criterion \cite{BergerKlosin13}, Theorem 4.1. 
  \end{proof}

\bibliographystyle{amsalpha}
\bibliography{standard2}


\appendix
\section{by  Cris Poor$^3$, Jerry Shurman$^4$, and David S. Yuen$^5$}

We prove the following theorem.

\begin{thm}\label{appendixtheorem}
The space $\StwoKsto$ has dimension~$19$, and its subspace of
Gritsenko lifts has dimension~$18$.  
There exists a new Hecke eigenform $f_{731}\in\StwoKsto$ such that:
\begin{enumerate}
\item $f_{731}$ has integer Fourier coefficients with content~$1$.
\item $f_{731}$ is not a Gritsenko lift.
\item $f_{731}$ is congruent modulo~$5$ to a Gritsenko lift Hecke
  eigenform that has integer Fourier coefficients.
\item the first few Hecke eigenvalues of $f_{731}$ are
$$
\lambda_2=-1,\ \lambda_3=0,\ \lambda_4=-2,\ \lambda_5=0,
$$
and its spin $2$-Euler factor is
$$
1+x+2x^2+2x^3+4x^4.
$$
\end{enumerate}
\end{thm}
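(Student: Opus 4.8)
The plan is to compute the paramodular space $\StwoKsto$ explicitly by the Fourier--Jacobi method of Poor and Yuen. Every $f\in\StwoKsto$ has an expansion $f=\sum_{m\ge 1}\phi_m$ in which $\phi_m$ is a Jacobi cusp form of weight $2$ and index $731m$, i.e. $\phi_m\in\JkNcusp{2}{731m}$, and the assignment $f\mapsto(\phi_1,\phi_2,\dots)$ is injective; moreover an effective vanishing theorem (Poor's bound on the order of vanishing along the one-dimensional cusp, together with restriction to embedded modular curves) produces an explicit cutoff $D$ so that $f$ is already determined by $\phi_1,\dots,\phi_D$. This gives the a priori inequality $\dim\StwoKsto\le\sum_{m=1}^{D}\dim\JkNcusp{2}{731m}$. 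Each Jacobi space on the right is then made explicit by exhibiting a spanning set of theta blocks (finite products of odd Jacobi theta quotients with a Dedekind-eta power) of the correct weight and index, and by computing, over $\Z$, the module of linear relations among their Fourier coefficients; in particular this yields $\dim\JkNcusp{2}{731}=18$.

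For the lower bound I would use the Gritsenko (Maass Spezialschar) lift $\Grit\colon\JkNcusp{2}{731}\hookrightarrow\StwoKsto$, which is injective and whose image has dimension $18$; this already gives $\dim\StwoKsto\ge 18$ and identifies the subspace of Gritsenko lifts. To pass to $19$ I would exhibit one explicit non-lift element: natural candidates are a holomorphic Borcherds product of weight $2$ on $\paramodulargroup{731}$, or an integral linear combination of Gritsenko lifts of theta blocks of various indices whose higher Fourier--Jacobi coefficients violate the Spezialschar compatibility relations. Computing finitely many Fourier coefficients of this element shows it is not in the span of the $18$ lifts, so $\dim\StwoKsto\ge 19$, and together with the upper bound of the previous paragraph this forces $\dim\StwoKsto=19$ with the Gritsenko lifts spanning a codimension-one subspace. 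Since that subspace is Hecke stable, its Hecke-stable complement is one-dimensional, hence spanned by a single Hecke eigenform; clearing denominators and dividing by the content gives $f_{731}$, which by construction has integer Fourier coefficients of content $1$ (part (1)) and is not a Gritsenko lift (part (2)).

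For part (3) I would compute the Fourier coefficients of $f_{731}$ and of the Gritsenko lift $\Grit(g)$, where $g$ is the weight-$2$ newform of level dividing $731$ attached to the elliptic curve $731a1$, on a set of indices large enough to generate the relevant module of Fourier coefficients, and verify $f_{731}\equiv\Grit(g)\pmod 5$ there; the congruence then propagates to all coefficients, and hence (via the Hecke recursions on Fourier coefficients) to the Hecke eigenvalues. For part (4) I would apply the degree-two paramodular Hecke operators at $2$, $3$, and $5$ (including the operator producing $\lambda_4=\lambda_{2^2}$) to the Fourier expansion of $f_{731}$, read off $\lambda_2=-1$, $\lambda_3=0$, $\lambda_4=-2$, $\lambda_5=0$ from the resulting coefficient identities, and assemble the degree-four spin Euler factor at $2$ from $\lambda_2$ and $\lambda_4$ by the standard formula (the $\lambda_{\ell,i}$-form of which is recalled in Theorem \ref{Gal rep for Sieg}); this yields $1+x+2x^2+2x^3+4x^4$.

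The principal obstacle is the upper bound: making the vanishing theorem quantitative at the composite level $731$ (pinning down $D$ and controlling the sizes of the Jacobi spaces $\JkNcusp{2}{731m}$ for $m\le D$), and then certifying over $\Z$ that the $18$ Gritsenko lifts together with the one constructed non-lift actually exhaust the resulting bound. Once the dimension is known exactly, all remaining assertions reduce to finite, if large, linear-algebra and Fourier-coefficient computations, carried out with the paramodular-forms package.
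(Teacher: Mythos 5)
Your overall architecture (truncate Fourier--Jacobi expansions, bound the dimension from above, exhibit enough forms from below) is the right one, but three of your key steps do not go through as stated. First, the upper bound: in weight $2$ there is no effective vanishing theorem of the kind you invoke, and the naive bound $\dim\StwoKsto\le\sum_{m\le D}\dim\JkNcusp{2}{731m}$ is in any case far too weak (already $\dim\JkNcusp{2}{731}=18$). The appendix instead (i) runs Jacobi Restriction, which imposes the paramodular consistency relations on formal truncated expansions and returns a $19$-dimensional overspace of $\StwoKsto^+[5]$, and (ii) proves injectivity of truncation, i.e.\ $\StwoKsto^+(6)=0$ and $\StwoKsto^-(2)=0$, by multiplying a hypothetical form with vanishing leading Jacobi coefficients by the $18$ weight-$2$ Gritsenko lifts so as to land in $\SfourKsto$ and deriving a contradiction there; this forces the separate, very large computation spanning a codimension-$4$ subspace of the $972$-dimensional space $\SfourKsto$ (products of lifts, a weight-$4$ Borcherds product, and bootstrapping with Hecke operators mod $5$). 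Second, the existence of the nonlift: there is no holomorphic nonlift Borcherds product in $\StwoKsto$ (the search algorithm rules this out), so your primary candidate does not exist; the appendix constructs Borcherds products in $\StwoKlevel{1462}^-$ and traces down to $\StwoKsto^+$, then checks linear independence from the lifts on computed coefficients.

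Third, and most importantly for part (3): verifying $f_{731}\equiv\beta g\pmod 5$ on finitely many Fourier coefficients does not by itself ``propagate to all coefficients'' --- a priori the difference could be a nonzero mod-$5$ form whose first several Jacobi coefficients vanish. The propagation is exactly what the mod-$5$ vanishing statement $\StwoKsto^+_5(6)=0$ buys (proved again by the weight-$4$ multiplication argument, now over $\F_5$), and even obtaining the congruence of truncations requires an eigenspace argument with $T_2$ and $T_4$ acting mod $5$ on the $19$-dimensional truncated space, since one cannot compute enough coefficients of $f_{731}$ directly to compare term by term. Your plan for part (4) is essentially what the appendix does once $\StwoKsto^+[5]$ is pinned down. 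To repair the proposal you would need to replace the weight-$2$ vanishing theorem by the weight-$4$ multiplication method, replace the level-$731$ Borcherds product by the trace-down from level $1462$, and supply the mod-$5$ vanishing result to upgrade a congruence of truncations to a congruence of forms.
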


The proof will take several steps.
The existence argument is motivated by techniques in
\cite{psyinflation,psysuper,MR3713095}, while the congruence is shown using a new 
technique.

\subsection{Notation and Jacobi Restriction method}

Let $\SkKN$ denote the space of weight~$k$ Siegel cusp forms for the
level~$N$ paramodular group, or, more briefly, the space of
weight~$k$, level~$N$ paramodular cusp forms.
We use the notation ``$[d]$'' for the image under projection onto the first $d$ Jacobi
coefficients, and ``$(d)$'' for the subspace where the first $d-1$
Jacobi coefficients vanish; because all Jacobi coefficients have index
divisible by~$N$, this condition is that the Jacobi coefficients of
index $N,2N,\dotsc,(d-1)N$ are~$0$.  Thus
\begin{alignat*}2
\SkKN[d]&=
&&\ \text{$\SkKN$ elements truncated to the first $d$ Jacobi coefficients},\\
\SkKN(d)&=
&&\ \text{$\SkKN$ elements with vanishing first $d-1$ Jacobi coefficients},
\end{alignat*}
and there is an exact sequence
$$
0\lra\SkKN(d+1)\lra\SkKN\lra\SkKN[d]\lra0.
$$
Analogous notation is understood to hold with either Fricke eigenspace
$\SkKN^\pm$ of~$\SkKN$, or any Atkin--Lehner eigenspace, and 
the exact sequence persists.

Let $\Jkmcusp$ denote the space of weight~$k$, index~$m$ Jacobi cusp forms.
Using the Fourier--Jacobi expansions of paramodular cusp forms, we view
two maps as containments for simplicity,
$$
\SkKN\subset\bigoplus_{j=1}^\infty\JkjNcusp\subset\C^\infty.
$$
The latter containment entails some chosen ordering  of the index sets
for Fourier expansions of Jacobi forms.
For any prime~$p$, let a subscript~$p$ denote the map from subsets
of~$\C^\infty$ to subsets of~$\Fp^\infty$ that reduces integral
elements modulo~$p$,
$$
V_p = (V\cap\Z^\infty)\mymod p,\quad V\subset\C^\infty.
$$
Thus
$$
\SkKN_p\subset\bigoplus_{j=1}^\infty(\JkjNcusp)_p\subset\Fp^\infty.
$$
The spaces 
$\SkKN$ and $\Jkmcusp$ have
bases in~$\Z^\infty$ by  results of  \cite{shim75} and \cite{ez85}, respectively.  
Thus $\dim_{\Fp}\SkKN_p=\dim_\C\SkKN$
and $\dim_{\Fp}(\Jkmcusp)_p=\dim_\C\Jkmcusp$.
Extending the notation from above,
\begin{alignat*}2
\SkKN[d]_p&=&&\ \text{reduction modulo $p$ of elements of
  $\SkKN[d]\cap\Z^\infty$},\\
\SkKN_p&=&&\ \text{reduction modulo $p$ of elements in
  $\SkKN\cap\Z^\infty$},\\
\SkKN_p(d)&=&&\ \text{elements of $\SkKN_p$ with first $d-1$
  Jacobi coefficients~$0$},
\end{alignat*}
and in consequence of an integral basis of~$\SkKN$,
\begin{align*}
\dim\SkKN[d]&=\dim\SkKN[d]_p,\\
\dim\SkKN(d)&\le\dim\SkKN_p(d).
\end{align*}
Again, analogous notation is understood to hold with either Fricke
eigenspace or any Atkin--Lehner eigenspace of~$\SkKN$, and the
dimension relations persist.

For either Fricke eigenvalue $\epsilon=\pm1$,
the Jacobi Restriction method \cite{bpy16,ipy13} runs with $d$ as a
parameter, returning a basis of a finite-dimensional complex vector
space $\JRMJdeps$ of truncated formal Fourier--Jacobi expansions such
that
$$
\SkKN^\epsilon[d]\subset\JRMJdeps\subset\bigoplus_{j=1}^d\JkjNcusp.
$$
An additional parameter, $\detmax$, bounds the Fourier coefficient
index determinants in the calculation, thus making the method an
algorithm; this parameter must be chosen so that each space $\JkjNcusp$
with~$j\le d$ is determined by the Fourier coefficients whose indices
satisfy the determinant bound.  For simplicity we suppress $\detmax$
from the notation $\JRMJdeps$, along with the weight~$k$ and the level~$N$.
We can also run the Jacobi Restriction method modulo a prime~$p$.
In this case, the algorithm returns a basis of a finite dimensional
vector space $\JRMJdpeps$ over~$\Fp$ such that
$$
\SkKN^\epsilon[d]_p\subset\JRMJdpeps\subset\bigoplus_{j=1}^d(\JkjNcusp)_p.
$$
The Jacobi Restriction method can also be run on any Atkin--Lehner
eigen\-space.  For a vector of signs describing an Atkin--Lehner
eigenspace, the Jacobi Restriction method returns a basis of a space
$\JRMJdsigns$ or $\JRMJdpsigns$ that contains
$\SkKN^{\text{signs}}[d]$ or $\SkKN^{\text{signs}}[d]_p$.

The methods of \cite{MR3713095} let us try to show that
$\StwoKN(d)$ or $\StwoKN_p(d)$ is~$0$ for a certain $d$.
The method we will use here requires us to span enough of~$\SfourKN$,
namely a subspace of codimension less than $\dim\JkNcusp2N$.  
A formula from \cite{ik} gives $\dim\SfourKlevel{731}=972$, and we
also have $\dim\JkNcusp2{731}=18$ from \cite{ez85}.  

\subsection{Spanning $\SfourKsto$}

For $\epsilon=(-1)^k$, let $\Grit:\JkNcusp kN\lra\SkKN^\epsilon$
be the linear map sending a Jacobi cusp form to its Gritsenko lift.  
Let $g_1,\dotsc,g_{18}$ be a basis of the space of Gritsenko lifts
in the Fricke plus space~$\StwoKsto^+$, 
and let $\tilde g_1,\dotsc,\tilde g_{138}$ be a basis of the space of
Gritsenko lifts in the Fricke plus space~$\SfourKsto^+$. 
The set
$$
S=\{T_\ell(g_ig_j),\ \tilde g_k: 1\le i\le j\le18,\ 1\le\ell\le4,\ 1\le k\le138\}
$$
turns out to span at least a $719$-dimensional space in~$\SfourKsto^+$,
as shown by its Fourier coefficients for indices out to determinant~$206$.
Establishing this required expanding the~$g_i$ initially out to
indices of determinant~$3296$.  Thus $\dim\SfourKsto^+\ge719$, and we
may select a basis of a $719$-dimensional subspace.  
For $\ell\nmid N$, the Hecke operator~$T_\ell$ employed here 
is the $T_{\ell,2}$ of the main article.  

Note that $731=17\cdot43$.  Let $\SfourKsto^{+,-}$ denote the subspace
of $\SfourKsto$ where the Atkin--Lehner operators $\AL_{17}$
and~$\AL_{43}$ have respective eigenvalues~$\pm1$, and similarly
for $\SfourKsto^{-,+}$.
Jacobi Restriction with parameters $d=6$, $p=5$, and~$\detmax=18275$
on these Atkin--Lehner subspaces returns spaces 
$\JRMJ_{6,5}^{+,-}$ and $\JRMJ_{6,5}^{-,+}$ such that
\begin{align*}
S_4(K(731))^{+,-}[6]_5\subset&\JRMJ_{6,5}^{+,-},
\quad\text{$\dim_{\Ffive}\JRMJ_{6,5}^{+,-}=125$},\\
S_4(K(731))^{-,+}[6]_5\subset&\JRMJ_{6,5}^{-,+},
\quad\text{$\dim_{\Ffive}\JRMJ_{6,5}^{-,+}=128$}.
\end{align*}
Even though $719+125+128=972$, it is not mathematically certain that
$719$, $125$, and~$128$ are the dimensions of the Fricke plus space
and the two Atkin--Lehner subspaces of the Fricke minus space in
$\SfourKsto$; however, in practice we are confident that they are, and
this guided our search for spanning elements.

We construct a Borcherds product $f_{4,731}\in\SfourKsto^-$.
The website \cite{yuen17} gives the details of the construction, but
here we note that the Borcherds product's leading theta block was
located among more than $354,000$ candidate theta blocks in
$\JkNcusp4{731}$ of $q$-vanishing order~$2$.  
Its symmetrizations
$$
h^{+,-},h^{-,+}=f_{4,731}\pm\AL_{17}(f_{4,731})
\in\SfourKsto^{+,-},\SfourKsto^{-,+}
$$
are both nonzero.  To generate many more  elements of these 
Atkin--Lehner subspaces, we use a technique called {\em bootstrapping\/}
\cite{psywt3sqfree}, created to overcome the obstacle that, computationally,
a Hecke operator returns shorter truncations of paramodular Fourier expansions 
than it receives.  This shortening allows only a few iterations of
Hecke operators, even starting with very long vectors that are
expensive to compute.  A brief summary of bootstrapping in the present
context is as follows.
\begin{enumerate}
\item We have a sufficiently long expansion of $h^{+,-}$ to identify
  it modulo~$5$ in $\JRMJ_{6,5}^{+,-}$.  The vector that we identify
  it as modulo~$5$ is longer.
\item Using the longer vector in $\JRMJ_{6,5}^{+,-}$, we have
  sufficiently many Fourier coefficients to apply~$T_2$, obtaining a
  shorter expansion of $T_2h^{+,-}$ modulo~$5$.  But this vector is
  sufficiently long to identify in $\JRMJ_{6,5}^{+,-}$, again giving
  us a long enough expansion to apply~$T_2$ again, and so on.
\end{enumerate}
This technique shows that $T_2^ih^{+,-}$ for $i=0,1,\dotsc,124$
are linearly independent modulo~$5$, and so
$\dim_{\Ffive}\SfourKsto^{+,-}[6]_5\ge125$.
Similarly, bootstrapping with $h^{-,+}$ gives
$\dim_{\Ffive}\SfourKsto^{-,+}[6]_5\ge124$.
Thus altogether we have shown that
$\dim_{\Ffive}\SfourKsto^{-}[6]_5\ge125+124=249$,
and we may select a basis modulo~$5$ of a $249$-dimensional subspace of $\SfourKsto^{-}_5$.  
So, because $719+249=968$, altogether we have reductions modulo~$5$ of
linearly independent elements that span a codimension~$4$ subspace
of~$\SfourKsto$.  
This is comfortably beyond the needed codimension $\dim\JkNcusp2{731}-1=17$.

\subsection{Determining $\StwoKsto^-$}

The $H_4(N,2,1)^-$ method of Proposition 4.6 in \cite{MR3713095}
uses a low-codimension subspace in $\SfourKN^-$ to show that
$\StwoKN_p^-(2)=0$.  Briefly, the idea is as follows.
Suppose that some~$h$ in $\StwoKsto_5^-(2)$ is nonzero.
Then the space $W = h\,\Grit(\JkNcusp2{731})_5 \subseteq S_4(K(731))_5^-$
has dimension~$18$.
But certain Fourier coefficients of every element~$w$ of~$W$
must be zero modulo~$5$, because such $w$ are a particular kind of product.
We make a $249\times r$ matrix~$M$ of coefficients from a basis of the
known $249$-dimensional subspace of $\SfourKsto^{-}_5$ where the
$r$ coefficients of every $w\in W$ must be zero, and we compute 
the rank of~$M$ modulo~$5$ to be~$249$.  Because  the 249-dimensional
subspace has codimension at most~$4$ in $\SfourKsto^{-}_5$, 
we conclude that $\dim W\le 4$, contradicting $\dim W=18$.
This proves that $\StwoKsto_5^-(2)=0$,
which in turn proves that $\StwoKsto^-(2)=0$.
Further, Jacobi Restriction with parameters $d=1$, $p=5$, and $\detmax=1462$
gives $\JRMJ_{1,5}^-=0$, and so $\StwoKsto^-[1]=0$, 
and the exact sequence gives $\StwoKsto^-=0$.

\subsection{Determining $\StwoKsto^+$}

The $H_4(N,6,1)^+$ method of Proposition 4.6 in \cite{MR3713095} 
shows that $\StwoKsto_5^+(6)=0$, using very similar reasoning.
This implies that $\StwoKsto^+(6)=0$.  
These critical vanishing results are ultimately what allow us to 
prove equalities and congruences in $\StwoKsto$.  
The exact sequence says that the projection map
$\StwoKsto^+\lra\StwoKsto^+[5]$ is an isomorphism.
Further, Jacobi Restriction with parameters $d=5$, $p=5$, and $\detmax=1462$
gives $\dim\JRMJ_{5,5}^+=19$, so $\dim\StwoKsto^+[5]\le 19$.
Thus $\dim\StwoKsto^+\le 19$, which is to say that there is at most
one dimension of nonlift eigenforms.

The existence of a nonlift,  given in section~\ref{sec16},  
will complete the proof that  $\dim\StwoKsto^+\  = 19$.

\subsection{Borcherds products}

Our current best method for finding a paramodular form that is not a
Gritsenko lift in the low weight~$2$ and the fairly high level~$731$ is to try
constructing Borcherds products (see \cite{psyinflation,psyallbp,MR3713095}).

However, the algorithm from \cite{psyallbp} shows that there are no
nonlift Borcherds products in $\StwoKsto$.
The method from \cite{psysuper,psyinflation}  is to go to twice the level and
search for a Borcherds product in the  minus Fricke
subspace~$\StwoKlevel{1462}^-$.  Indeed, we find two Borcherds products
$$
f_{2,1462,a},f_{2,1462,b}\in\StwoKlevel{1462}^-.
$$
See the website \cite{yuen17} for their constructions.

\subsection{Tracing down from $\StwoKlevel{1462}^-$ to $\StwoKlevel{731}^+$.}
\label{sec16}

\def\tracedown{{\rm TrDn}}

We prove the existence of a nonlift in~$\StwoKlevel{731}^+$ by using
the trace down map from \cite{MR3713095},
$$
\tracedown: \StwoKlevel{1462} \to \StwoKlevel{731}^+.
$$
Here the codomain is the Fricke plus space because
$\StwoKlevel{731}^-=0$, as shown above.
Tracing down the Borcherds products $f_{2,1462,a}$ and $f_{2,1462,b}$
requires many more Fourier coefficients than we can directly compute.
So we run Jacobi Restriction on $\StwoKlevel{1462}^-$ with parameters
$d=4$ and $\detmax=5848$, and get $\dim\JRMJ_4^-=2$.
After identifying the projections of the Borcherds products in $\JRMJ_4^-$, 
we can compute two Fourier coefficients of 
$\tracedown{(f_{2,1462,a})}$ and $\tracedown{(f_{2,1462,b})}$, 
but this is insufficient for our purposes.  
 To remedy this, 
we use the fact that the Borcherds products are Fricke minus forms 
to compute further Fourier coefficients for indices that are beyond 
the fourth Jacobi coefficient but still within the $\detmax$ bound.  
This permits the computation of~$46$ 
 Fourier coefficients of 
$\tracedown{(f_{2,1462,a})}$ and $\tracedown{(f_{2,1462,b})}$.
The computed coefficients of $\tracedown{(f_{2,1462,a})}$ show that
it is linearly independent of the Gritsenko lifts in $\StwoKlevel{731}^+$.
This proves that $\dim\StwoKlevel{731}^+=19$.

Because there is only one nonlift eigenform in $\StwoKsto^+$, it must
scale to have rational coefficients, and then rescale to a nonlift
eigenform $f_{731}$ that has integer Fourier coefficients with
content~$1$.  Jacobi Restriction with parameters $d=5$ and
$\detmax=1462$ returns $\dim\JRMJ_5^+=19$.
Hence $\StwoKsto^+[5]=\JRMJ_{5}^+$ and we have have enough
coefficients to compute eigenvalues on the Hecke operators through~$T_5$.
Especially, the nonlift eigenform $f_{731}$ has eigenvalues
$$
\lambda_2=-1,\ \lambda_3=0,\ \lambda_4=-2,\ \lambda_5=0.
$$

We refer to \cite{robertsschmidt06,robertsschmidt07} for the theory of
global paramodular newforms, and note here that the nonlift eigenform
$f_{731}$ must be a newform because by \cite{py15} the spaces 
$\StwoKlevel1$, $\StwoKlevel{17}$, and $\StwoKlevel{43}$ have no nonlifts.  

\subsection{Proving the mod 5 congruence.}

\newcommand\trunc[1]{#1[5]}      
\newcommand\red[1]{#1_5}      
\newcommand\redtrunc[1]{#1[5]_5}

First we compute the action of $T_2$ on the $18$-dimensional space of
Gritsenko lifts in $\StwoKsto^+$.  The characteristic polynomial of~$T_2$
on this space, factored into irreducibles over~$\Q$, is
\begin{align*}
&(x-4)(x-2)^2(x-1)\\
&\quad\cdot(x^6-17 x^5+112x^4-358 x^3+566 x^2-400 x+101)\\
&\quad\cdot(x^8-25 x^7+264 x^6-1530 x^5+5286x^4-11046 x^3+13465x^2-8612x+2174).
\end{align*}
Thus there is one Gritsenko lift eigenform with $\lambda_2=4$, which
we denote~$g$ and scale to have integer coefficients with content~$1$.
The first few eigenvalues of~$g$ can be computed,
$$
\lambda_2=4,\ \lambda_3=5,\ \lambda_4=8,\ \lambda_5=5.
$$
We now prove that the Fourier expansion of~$f_{731}$ is congruent to an
integer multiple of the Fourier expansion of~$g$ modulo~$5$.
For any $h\in\StwoKsto^+$ with integer coefficients, introduce
correspondingly
$$
\trunc h\in\StwoKsto^+[5],\quad
\red h\in\StwoKsto^+_5,\quad
\redtrunc h\in\StwoKsto^+[5]_5.
$$
For any $v\in\StwoKsto^+[5]_5$, there are enough computed Fourier
coefficients of $T_2v$ to determine $T_2v$ as an element of $\StwoKsto^+[5]_5$,
and so we can compute a matrix representative of the map
$$
\redtrunc{T_2}:\StwoKsto^+[5]_5\lra\StwoKsto^+[5]_5.
$$
Basic linear algebra on the matrix representative shows that the
$\redtrunc{T_2}$-eigenspace with eigenvalue~$4$,
$$
V=\{h\in\StwoKsto^+[5]_5:\redtrunc{T_2}h=4h\},
$$
is $2$-dimensional.
Although we did not have enough Fourier coefficients to compute the
matrix representative of $\redtrunc{T_4}$ on $\StwoKsto^+[5]_5$, we do
have enough to compute its matrix representative on the 
space~$V$, 
which has a smaller set of determining Fourier coefficients, 
and then basic linear algebra shows that the
$\redtrunc{T_4}$-eigenspace with eigenvalue~$3$ in~$V$ is $1$-dimensional.
Both $\redtrunc{f_{731}}$ and $\redtrunc g$ lie in~$V$,
and both have $\redtrunc{T_4}$-eigenvalue~$3$.
They cannot be linearly independent, because then they would span~$V$,
giving the contradiction that $\redtrunc{T_4}|_V=3\operatorname{Id}_V$.
Thus $\redtrunc{f_{731}}$ and~$\redtrunc g$ are linearly dependent,
and so there exist an integer $\beta\ne0\mymod5$ such that
$\redtrunc{f_{731}}+\beta\redtrunc g=0\mymod5$.  This implies that
$$
\red{(f_{731}+\beta g)}\in \StwoKsto^+_5(6).
$$
We earlier proved the critical result that  $\StwoKsto_5^+(6)=0$, and this implies 
$\red{(f_{731}+\beta g)}=0$.  This shows that $f_{731}$ is
congruent to a multiple of~$g$ modulo~$5$.
The multiple of $g$ is still a Gritsenko lift, so $f_{731}$ is
congruent modulo~$5$ to a Gritsenko lift.
This completes the proof of Theorem~\ref{appendixtheorem}.

\smallskip

We thank Ralf Schmidt for helpful discussions.

\bibliographystyleapp{plain}
\bibliographyapp{appendix731}

\end{document}